\newtheorem{theorem}{Theorem}[section]
\newtheorem{proposition}[theorem]{Proposition}
\newtheorem{lemma}[theorem]{Lemma}
\newtheorem{problem}[theorem]{Problem}
\theoremstyle{remark}
\newtheorem{remark}[theorem]{Remark}
\newcommand{\alert}[1]{{\color{DarkGreen}\emph{#1}}}
\newcommand{\parr}{\leftarrow_{P}}
\newcommand{\qarr}{\leftarrow_{Q}}
\newcommand{\fs}{\mathfrak{s}}
\newcommand{\cc}{\mathfrak{c}}
\newcommand{\ac}{\mathfrak{a}}
\newcommand{\KK}{\mathbb{K}}
\newcommand{\PP}{\mathcal{P}}
\newcommand{\QQ}{\mathcal{Q}}
\newcommand{\BB}{\mathcal{B}}
\newcommand{\CL}{\underline{\mathfrak{B}}}
\newcommand\rBrace[2]{%
	\left.\rule{0pt}{#1}\right\}\text{#2}}
\author{Henri M\"uhle
}
\title[Counting Proper Mergings of Stars and Chains]{
  Proper Mergings of Stars and Chains are Counted by Sums of Antidiagonals in Certain 
  Convolution Arrays\\ -- The Details --
}
\address{Fakult\"at f\"ur Mathematik, Universit\"at Wien, Vienna, Austria}
\email{henri.muehle@univie.ac.at}
\thanks{Supported by the FWF research grant no. Z130-N13.}
\begin{document}

\allowdisplaybreaks

\begin{abstract}
	A proper merging of two disjoint quasi-ordered sets $P$ and $Q$ is a quasi-order on the 
	union of $P$ and $Q$ such that the restriction to $P$ or $Q$ yields the original 
	quasi-order again and such that no elements of $P$ and $Q$ are identified. In this article, 
	we determine the number of proper mergings in the case where $P$ is a star (\emph{i.e.} an antichain 
	with a smallest element adjoined), and $Q$ is a chain. We show that the lattice of proper mergings
	of an $m$-antichain and an $n$-chain, previously investigated by the author, is a quotient lattice
	of the lattice of proper mergings of an $m$-star and an $n$-chain, and we determine the number of
	proper mergings of an $m$-star and an $n$-chain by counting the number of congruence classes and
	by determining their cardinalities. Additionally, we compute the number of 
	Galois connections between certain modified Boolean lattices and chains.
\end{abstract}

\maketitle

\section{Introduction}
  \label{sec:introduction}
Given two quasi-ordered sets $(P,\parr)$ and $(Q,\qarr)$, a merging of $P$ and $Q$ is a quasi-order 
$\leftarrow$ on the union of $P$ and $Q$ such that the restriction of $\leftarrow$ to $P$ 
or $Q$ yields $\parr$ respectively $\qarr$ again. In other words, a merging of $P$ 
and $Q$ is a quasi-order on the union of $P$ and $Q$, which does not change the quasi-orders on $P$ and 
$Q$. 

In \cite{ganter11merging} a characterization of the set of mergings of two arbitrary quasi-ordered sets 
$P$ and $Q$ is given. In particular, it turns out that every merging $\leftarrow$ of $P$ and $Q$ can be 
uniquely described by two binary relations $R\subseteq P\times Q$ and $T\subseteq Q\times P$. The 
relation $R$ can be interpreted as a description, which part of $P$ is weakly below $Q$, and analogously 
the relation $T$ can be interpreted as a description, which part of $Q$ is weakly below $P$. It was shown
in \cite{ganter11merging} that the set of mergings forms a distributive lattice in a 
natural way. If a merging satisfies $R\cap T^{-1}=\emptyset$, and hence if no element of $P$ is identified 
with an element of $Q$, then it is called proper, and the set of proper mergings forms a distributive 
sublattice of the previous one.

In \cite{muehle12counting}, the author gave formulas for the number of proper mergings of (i) an $m$-chain 
and an $n$-chain, (ii) an $m$-antichain and an $n$-antichain and (iii) an $m$-antichain and an $n$-chain,
see \cite{muehle12counting}*{Theorem~1.1}. The present article can be seen as a subsequent work which was 
triggered by the following observation: if we denote the number of proper mergings of an $m$-star 
(\emph{i.e.} an $m$-antichain with a minimal element adjoined) and an $n$-chain by $F_{\fs\!\cc}(m,n)$,
then the first few entries of $F_{\fs\!\cc}(2,n)$ (starting with $n=0$) are
\begin{displaymath}
	1, 12, 68, 260, 777, 1960, 4368, \ldots,
\end{displaymath}
and the first few entries of $F_{\fs\!\cc}(3,n)$ (starting with $n=0$) are
\begin{displaymath}
	1, 24, 236, 1400, 6009, 20608, 59952, \ldots.
\end{displaymath}
Surprisingly, these sequences are \cite{sloane}*{A213547} and \cite{sloane}*{A213560}, respectively, and
they describe sums of antidiagonals in certain convolution arrays. Inspired by this connection, we are
able to prove the following theorem.

\begin{theorem}
  \label{thm:cardinality}
	Let $\mathfrak{S\!C}_{m,n}^{\bullet}$ denote the set of proper mergings of an $m$-star and an 
	$n$-chain. Then,
	\begin{displaymath}
		\Bigl\lvert\mathfrak{S\!C}_{m,n}^{\bullet}\Bigr\rvert = 
		  \sum_{k=1}^{n+1}{k^{m}(n-k+2)^{m+1}}.
	\end{displaymath}
\end{theorem}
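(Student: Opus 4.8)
The plan is to reduce each proper merging to a concrete system of intervals, then enumerate those systems by organising the count through the quotient map onto the antichain case promised in the abstract.

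\medskip
\noindent\textbf{Characterisation (the crux).} First I would invoke the description of mergings by a pair of relations $(R,T)$ from \cite{ganter11merging} and specialise it to $P$ an $m$-star and $Q$ an $n$-chain $q_{1}<\cdots<q_{n}$. Since $Q$ is a chain, the chain elements below a fixed star element $p$ form a down-set $\{q_{1},\dots,q_{d_{p}}\}$ and those above it an up-set $\{q_{e_{p}+1},\dots,q_{n}\}$, so $p$ is recorded faithfully by an interval $[d_{p},e_{p}]$ among the $n+1$ ``gaps'' $\{0,1,\dots,n\}$, with $0\le d_{p}\le e_{p}\le n$; the elements $q_{d_{p}+1},\dots,q_{e_{p}}$ are exactly those incomparable to $p$. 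Translating the quasi-order axioms together with the requirement that the restriction to $P$ be unchanged and the merging be proper, I would show that a collection of intervals (one per star element) is admissible iff (i) the centre interval lies endpoint-wise to the left of every atom interval, i.e.\ $d_{0}\le d_{i}$ and $e_{0}\le e_{i}$ for all atoms $i$, and (ii) the $m$ atom intervals share a common gap, i.e.\ $\max_{i}d_{i}\le\min_{i}e_{i}$. Condition (ii) is the delicate point: if one atom sat strictly below a chain element that another atom sat strictly above, transitivity through the chain would force a comparison between two atoms and thereby alter the antichain order, which is forbidden; conversely the one-dimensional Helly property turns pairwise compatibility into a common-gap condition. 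Deriving (i)--(ii) rigorously from the $(R,T)$ formalism is where I expect the main work to lie.

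\medskip
\noindent\textbf{Quotient and fibre sizes.} Forgetting the centre interval sends an admissible system to its atom system, which by (ii) alone is precisely a proper merging of the $m$-antichain and the $n$-chain; call this map $\pi$. It is a surjective homomorphism of the (distributive) merging lattices, so its fibres are the congruence classes and
\[
	\bigl\lvert\mathfrak{S\!C}_{m,n}^{\bullet}\bigr\rvert=\sum_{\alpha\in\mathfrak{A\!C}_{m,n}^{\bullet}}\bigl\lvert\pi^{-1}(\alpha)\bigr\rvert .
\]
For a fixed antichain merging $\alpha$ with $p=\min_{i}d_{i}$ and $q=\min_{i}e_{i}$ (note $p\le q$ by (ii)), an admissible centre is any interval $[d_{0},e_{0}]$ with $0\le d_{0}\le p$ and $d_{0}\le e_{0}\le q$, and counting these gives the class size
\[
	\bigl\lvert\pi^{-1}(\alpha)\bigr\rvert=(p+1)(q+1)-\binom{p+1}{2}.
\]
This reduces the problem to summing the displayed quantity over $\mathfrak{A\!C}_{m,n}^{\bullet}$, for which I may use the enumeration from \cite{muehle12counting}, refined by the joint distribution of the statistics $p$ and $q$.

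\medskip
\noindent\textbf{Final identity.} It remains to evaluate $\sum_{\alpha}\bigl[(p+1)(q+1)-\binom{p+1}{2}\bigr]$. Grouping the atom systems by a common gap $c\in\{0,1,\dots,n\}$ and performing an elementary telescoping (Abel-type) rearrangement, I expect the sum to collapse to
\[
	\bigl\lvert\mathfrak{S\!C}_{m,n}^{\bullet}\bigr\rvert=\sum_{c=0}^{n}(c+1)^{m}(n-c+1)^{m+1},
\]
whereupon the substitution $k=c+1$ yields exactly $\sum_{k=1}^{n+1}k^{m}(n-k+2)^{m+1}$. Heuristically the two exponents record that the $m$ atoms each contribute a choice of how far they descend below the cut (the factor $(c+1)^{m}$), while all $m+1$ star elements contribute a choice above the cut (the factor $(n-c+1)^{m+1}$); the centre's extra degree of freedom is what promotes the second exponent from $m$ to $m+1$. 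Since this summation is routine once the fibre sizes and the distribution of $(p,q)$ are in hand, the genuine obstacle remains the interval characterisation of the first paragraph, and in particular the justification of condition (ii).
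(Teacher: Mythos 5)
Your plan is correct and is essentially the paper's own proof: the quotient onto $\mathfrak{A\!C}_{m,n}^{\bullet}$ is the paper's map $\eta$ (Proposition~\ref{prop:quotient_lattice}), your fibre count $(p+1)(q+1)-\binom{p+1}{2}$ is, under the dictionary $p=l$ and $q+1=k_{1}$, exactly the formula $k_{1}(l+1)-\binom{l+1}{2}$ of Lemma~\ref{lem:count_valid}, and the summation you defer as ``routine'' is carried out by precisely the kind of elementary telescoping you describe in Appendix~\ref{app:proof}, arriving at the same collapsed form. Your interval encoding is an equivalent repackaging of the paper's bond conditions and of its bijection with monotone $(n+1)$-colorings of $\vec{K}_{m,m}$ from Section~\ref{sec:cardinality_parts} (the colors are $n+1-e_{i}$ on one side and $n+1-d_{i}$ on the other, and monotonicity along the edges is exactly your common-gap condition (ii)), so the derivation of (i)--(ii) that you flag as the main remaining work corresponds to Lemmas~\ref{lem:restriction}, \ref{lem:injection} and \ref{lem:restricted_coloring}, and goes through as you expect.
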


The proof of Theorem~\ref{thm:cardinality} is obtained in the following way: after recalling the necessary
notations and definitions in Section~\ref{sec:preliminaries}, we observe in Section~\ref{sec:embedding}
that the lattice $\bigl(\mathfrak{S\!C}_{m,n}^{\bullet},\preceq\bigr)$ contains a certain quotient lattice, 
namely the lattice $\bigl(\mathfrak{A\!C}_{m,n}^{\bullet},\preceq\bigr)$ of proper mergings of an 
$m$-antichain and an $n$-chain. The cardinality of $\mathfrak{A\!C}_{m,n}^{\bullet}$ was determined by the 
author in \cite{muehle12counting}. Then, in Section~\ref{sec:enumeration}, we determine the cardinalities
of the congruence classes of the lattice congruence generating 
$\bigl(\mathfrak{A\!C}_{m,n}^{\bullet},\preceq\bigr)$ as a quotient lattice of 
$\bigl(\mathfrak{S\!C}_{m,n}^{\bullet},\preceq\bigr)$, using a decomposition of 
$\mathfrak{A\!C}_{m,n}^{\bullet}$ by means of the bijection with monotone $(n+1)$-colorings of the complete 
bipartite digraph $\vec{K}_{m,m}$ described in \cite{muehle12counting}*{Section~5}. 

Using a theorem from Formal Concept Analysis which relates Galois connections between lattices to binary
relations between their formal contexts, we are able to determine the number of Galois connections between
certain modified Boolean lattices and chains in Section~\ref{sec:galois}. The mentioned modified Boolean
lattices and chains arise in a natural way, when considering proper mergings of stars and chains, thus we have
decided to include this result in the present article.

\section{Preliminaries}
  \label{sec:preliminaries}
In this section we recall the basic notations and definitions needed in this article. For a
detailed introduction to Formal Concept Analysis, we refer to \cite{ganter99formal}. 

\subsection{Formal Concept Analysis}
  \label{sec:formal_concept_analysis}
The theory of Formal Concept Ana\-lysis (FCA) was introduced in the 1980s by Rudolf Wille,
see \cite{wille82restructuring}, as an approach to restructure lattice theory. The initial goal was 
to interpret lattices as hierarchies of concepts and thus to give meaning to the lattice elements in 
a fixed context. Such a \alert{formal context} is a triple $(G,M,I)$, where $G$ is a set of 
so-called \alert{objects}, $M$ is a set of so-called \alert{attributes} and $I\subseteq G\times M$ is
a binary relation that describes whether an object \alert{has} an attribute. Given a formal context
$\mathbb{K}=(G,M,I)$, we define two derivation operators
\begin{align*}
	(\cdot)^{I}:\wp(G)\rightarrow\wp(M), & 
	  \quad A\mapsto A^{I}=\{m\in M\mid g\;I\;m\;\mbox{for all}\;g\in A\},\\
	(\cdot)^{I}:\wp(M)\rightarrow\wp(G), & 
	  \quad B\mapsto B^{I}=\{g\in G\mid g\;I\;m\;\mbox{for all}\;m\in B\},
\end{align*}
where $\wp$ denotes the power set. The notation $g\;I\;m$ is to be understood as $(g,m)\in I$. 
Let now $A\subseteq G$, and $B\subseteq M$. For brevity, if $g\in G$, then we write simply
$g^{I}$ instead of $\{g\}^{I}$, and analogously if $m\in M$, then we write $m^{I}$ instead of $\{m\}^{I}$. 
The pair $\mathfrak{b}=(A,B)$ is called \alert{formal concept of $\KK$} if $A^{I}=B$ and $B^{I}=A$. In this 
case, we call $A$ the \alert{extent} and $B$ the \alert{intent of $\mathfrak{b}$}. It can easily be seen that 
for every $A\subseteq G$, and $B\subseteq M$, the pairs $\bigl(A^{II},A^{I}\bigr)$ and respectively
$\bigl(B^{I},B^{II}\bigr)$ are formal concepts. Conversely, every formal concept of 
$\KK$ can be written in such a way. We denote the set of all formal concepts of 
$\KK$ by $\mathfrak{B}(\KK)$, and define a partial order on $\mathfrak{B}(\KK)$ by
\begin{displaymath}
	(A_{1},B_{1})\leq(A_{2},B_{2})\quad\mbox{if and only if}\quad
	  A_{1}\subseteq A_{2}\quad(\mbox{or equivalently}\;B_{1}\supseteq B_{2}).
\end{displaymath}
Let $\CL(\KK)$ denote the poset $\bigl(\mathfrak{B}(\KK),\leq\bigr)$. The basic theorem of FCA (see 
\cite{ganter99formal}*{Theorem~3}) states that $\CL(\KK)$ is a complete lattice, the so-called 
\alert{concept lattice of $\KK$}. Moreover, every complete lattice is a concept lattice. 

Usually, a formal context is represented by a cross-table, where the rows represent the 
objects and the columns represent the attributes. The cell in row $g$ and column $m$ contains a 
cross if and only if $g\;I\;m$. For every context $\KK=(G,M,I)$, there are two maps
\begin{align}\label{eq:maps}\begin{aligned}
	\gamma & :G\rightarrow\CL(\KK),\quad && g\mapsto \bigl(g^{II},g^{I}\bigr),
	  \quad\mbox{and}\\
	\mu & :M\rightarrow\CL(\KK),\quad && m\mapsto \bigl(m^{I},m^{II}\bigr),
\end{aligned}\end{align}
mapping each object, respectively attribute, to its corresponding formal concept. It is common sense
in FCA to label the Hasse diagram of $\CL(\KK)$ in the following way: the node representing a formal 
concept $\mathfrak{b}\in\mathfrak{B}(\KK)$ is labeled with the object $g$ (or with the attribute $m$) 
if and only if $\mathfrak{b}=\gamma g$ (or $\mathfrak{b}=\mu m$). Object labels are attached 
below the nodes in the Hasse diagram, and attribute labels above. In this presentation, the extent 
(intent) of a formal concept corresponds to the labels weakly below (weakly above) this formal 
concept in the Hasse diagram of $\CL(\KK)$. See Figures~\ref{fig:4_star} and \ref{fig:4_chain} for small 
examples. 

\subsection{Bonds and Mergings}
  \label{sec:bonds_mergings}
Let $\KK_{1}=(G_{1},M_{1},I_{1})$, and $\KK_{2}=(G_{2},M_{2},I_{2})$ be formal contexts. A binary
relation $R\subseteq G_{1}\times M_{2}$ is called \alert{bond from $\KK_{1}$ to $\KK_{2}$} if for 
every object $g\in G_{1}$, the row $g^{R}$ is an intent of $\KK_{2}$ and for every 
$m\in M_{2}$, the column $m^{R}$ is an extent of $\KK_{1}$. 

Now let $(P,\parr)$ and $(Q,\qarr)$ be disjoint quasi-ordered sets. Let $R\subseteq P\times Q$, and 
$T\subseteq Q\times P$. Define a relation $\leftarrow_{R,T}$ on $P\cup Q$ as
\begin{equation}
  \label{eq:merging}
	p\leftarrow_{R,T} q\quad\mbox{if and only if}\quad
	  p\parr q\;\;\text{or}\;\;p\qarr q\;\;\text{or}\;\;p\;R\;q\;\;\text{or}\;\;p\;T\;q,
\end{equation}
for all $p,q\in P\cup Q$. The pair $(R,T)$ is called \alert{merging of $P$ and $Q$} if 
$(P\cup Q,\leftarrow_{R,T})$ is a quasi-ordered set. Moreover, a merging is called \alert{proper} if 
$R\cap T^{-1}=\emptyset$. Since for fixed quasi-ordered sets $(P,\parr)$ and $(Q,\qarr)$ the relation 
$\leftarrow_{R,T}$ is uniquely determined by $R$ and $T$, we refer to $\leftarrow_{R,T}$ as a 
(proper) merging of $P$ and $Q$ as well. Let $\circ$ denote the relational 
product.

\begin{proposition}[\cite{ganter11merging}*{Proposition~2}]
  \label{prop:classification_mergings}
	Let $(P,\parr)$ and $(Q,\qarr)$ be disjoint quasi-ordered sets, and let 
	$R\subseteq P\times Q$, and $T\subseteq Q\times P$. The pair $(R,T)$ is a merging of $P$ and $Q$ 
	if and only if all of the following properties are satisfied:
	\begin{enumerate}
		\item $R$ is a bond from $(P,P,\not\rightarrow_{P})$ to $(Q,Q,\not\rightarrow_{Q})$,
		\item $T$ is a bond from $(Q,Q,\not\rightarrow_{Q})$ to $(P,P,\not\rightarrow_{P})$,
		\item $R\circ T$ is contained in $\parr$, and
		\item $T\circ R$ is contained in $\qarr$.
	\end{enumerate}
	Moreover, the relation $\leftarrow_{R,T}$ as defined in \eqref{eq:merging} is antisymmetric if 
	and only if $\parr$ and $\qarr$ are both antisymmetric and $R\cap T^{-1}=\emptyset$. 
\end{proposition}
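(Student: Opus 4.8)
The plan is to verify directly that the relation $\leftarrow_{R,T}$ defined in \eqref{eq:merging} is a quasi-order exactly under the stated hypotheses, translating each requirement into a condition on $R$ and $T$. Reflexivity is immediate and imposes nothing: for $x\in P\cup Q$ we have either $x\parr x$ or $x\qarr x$ by reflexivity of the two original quasi-orders, so $x\leftarrow_{R,T}x$ always holds. Hence everything hinges on transitivity, and that is where the four enumerated conditions will come from.

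To analyse transitivity I would take a triple $x,y,z\in P\cup Q$ with $x\leftarrow_{R,T}y$ and $y\leftarrow_{R,T}z$ and split into the $2^{3}=8$ cases according to whether each of $x,y,z$ lies in $P$ or in $Q$. Because $\parr$ relates only elements of $P$, $\qarr$ only elements of $Q$, $R$ goes only from $P$ to $Q$, and $T$ only from $Q$ to $P$, in each case exactly one clause of \eqref{eq:merging} can be responsible for each hypothesis and exactly one clause can deliver the conclusion. The two homogeneous cases reduce to transitivity of $\parr$ and of $\qarr$, which hold by assumption. The six mixed cases yield six containment requirements: the cases $PQP$ and $QPQ$ give precisely $R\circ T\subseteq\parr$ and $T\circ R\subseteq\qarr$, that is, conditions (3) and (4); the cases $PPQ$ and $PQQ$ give $\parr\circ R\subseteq R$ and $R\circ\qarr\subseteq R$; and the cases $QPP$ and $QQP$ give $T\circ\parr\subseteq T$ and $\qarr\circ T\subseteq T$.

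It then remains to identify these four one-sided compatibility conditions with the bond conditions (1) and (2), and this is the step I expect to carry the real content. I would first compute the derivation operators of the context $(P,P,\not\rightarrow_{P})$ and check that its extents are exactly the $\parr$-order ideals (down-sets) and its intents exactly the $\parr$-order filters (up-sets), and likewise for $(Q,Q,\not\rightarrow_{Q})$. Granting this, the bond requirement that every row $g^{R}$ be an intent of $(Q,Q,\not\rightarrow_{Q})$ says precisely that each set $\{q : g\,R\,q\}$ is a $\qarr$-up-set, i.e.\ $R\circ\qarr\subseteq R$, while the requirement that every column $m^{R}$ be an extent of $(P,P,\not\rightarrow_{P})$ says precisely that each $\{p : p\,R\,m\}$ is a $\parr$-down-set, i.e.\ $\parr\circ R\subseteq R$. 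The analogous reading of condition (2) for $T$ yields $T\circ\parr\subseteq T$ and $\qarr\circ T\subseteq T$. Thus (1) and (2) are exactly the four compatibility conditions, so transitivity is equivalent to (1)--(4); the main care needed is to keep the arrow convention hidden in $\not\rightarrow_{P}$ consistent with the composition convention used for $\circ$.

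Finally, for the ``moreover'' clause I would treat antisymmetry by the same membership split. For $x,y$ both in $P$ (respectively both in $Q$), the pair $x\leftarrow_{R,T}y$ and $y\leftarrow_{R,T}x$ reduces to antisymmetry of $\parr$ (respectively $\qarr$). For $x\in P$ and $y\in Q$ the two relations force $x\,R\,y$ and $y\,T\,x$, that is, $(x,y)\in R\cap T^{-1}$; since $P$ and $Q$ are disjoint we have $x\neq y$, so no such pair may occur. Hence $\leftarrow_{R,T}$ is antisymmetric if and only if both $\parr$ and $\qarr$ are antisymmetric and $R\cap T^{-1}=\emptyset$, as claimed.
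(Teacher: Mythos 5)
Your proof is correct and complete; note that there is no internal proof to compare it with, since the paper imports Proposition~\ref{prop:classification_mergings} verbatim from \cite{ganter11merging}*{Proposition~2} and never reproves it, so your blind reconstruction serves as a self-contained substitute. It rests on two verifications, both of which check out. First, because $P$ and $Q$ are disjoint and $\parr$, $\qarr$, $R$, $T$ relate elements of pairwise different types, each of the eight membership patterns of a triple activates exactly one clause of \eqref{eq:merging} in each hypothesis and in the conclusion; hence transitivity is equivalent to the conjunction of transitivity of $\parr$ and $\qarr$ (cases $PPP$, $QQQ$), conditions (3) and (4) (cases $PQP$, $QPQ$), and the four one-sided conditions $\parr\circ R\subseteq R$, $R\circ\qarr\subseteq R$, $\qarr\circ T\subseteq T$, $T\circ\parr\subseteq T$, while reflexivity is free, as you say. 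Second, your computation that the extents (intents) of a contraordinal scale are exactly the down-sets (up-sets) remains valid for quasi-orders --- only reflexivity and transitivity are used --- so the bond conditions (1) and (2) are precisely those four one-sided conditions; in particular no closure-system subtlety intervenes, because here \emph{every} up-set is an intent and \emph{every} down-set an extent, so ``row is an intent'' really is just a closure condition. The convention issue you flag resolves the way you expect: $\parr$ is the relation $\leftarrow_{P}$ and plays the role of $\leq$, so $\not\rightarrow_{P}$, the complement of its converse, is the analogue of the $\not\geq_{\fs}$ and $\not\geq_{\cc}$ used throughout the paper, and condition (3), which requires $R\circ T\subseteq P\times P$, forces the left-to-right reading of $\circ$ under which your identification of rows with $\qarr$-up-sets and columns with $\parr$-down-sets is the right one. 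The antisymmetry argument is likewise complete: on homogeneous pairs $\leftarrow_{R,T}$ restricts to $\parr$ respectively $\qarr$, and on mixed pairs disjointness gives $x\neq y$, so a violation occurs exactly when $R\cap T^{-1}\neq\emptyset$.
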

In the case that $P$ and $Q$ are posets, this proposition implies that $(P\cup Q,\leftarrow_{R,T})$ is 
a poset again if and only if $(R,T)$ is a proper merging of $P$ and $Q$. 
Denote the set of mergings of $P$ and $Q$ by 
$\mathfrak{M}_{P,Q}$, and define a partial order on $\mathfrak{M}_{P,Q}$ by 
\begin{equation}
  \label{eq:merging_order}
	(R_{1},T_{1})\preceq(R_{2},T_{2})\quad\mbox{if and only if}\quad 
	  R_{1}\subseteq R_{2}\;\mbox{and}\;T_{1}\supseteq T_{2}.
\end{equation}
It was shown in \cite{ganter11merging}*{Theorem~1} that $\bigl(\mathfrak{M}_{P,Q},\preceq\bigr)$ is a
lattice, where $(\emptyset,Q\times P)$ is the unique minimal element, and $(P\times Q,\emptyset)$ the
unique maximal element. Moreover, it follows from \cite{ganter11merging}*{Theorem~2} that 
$\bigl(\mathfrak{M}_{P,Q},\preceq\bigr)$ is distributive. Let 
$\mathfrak{M}_{P,Q}^{\bullet}\subseteq\mathfrak{M}_{P,Q}$ denote the set of all \alert{proper} mergings of 
$P$ and $Q$. It was also shown in \cite{ganter11merging} that 
$\bigl(\mathfrak{M}_{P,Q}^{\bullet},\preceq\bigr)$ is a distributive sublattice of 
$\bigl(\mathfrak{M}_{P,Q},\preceq\bigr)$. 

\subsection{$m$-Stars}
  \label{sec:m_stars}
Let $A=\{a_{1},a_{2},\ldots,a_{m}\}$ be a set. An $m$-antichain is a poset $\ac=(A,=_{\ac})$, satisfying 
$a_{i}=_{\ac}a_{j}$ if and only if $i=j$ for all $i,j\in\{1,2,\ldots,m\}$. Consider the set $S=A\cup\{s_{0}\}$,
and define a partial order $\leq_{\fs}$ on $S$ as follows: $s\leq_{\fs} s'$ if and only if either $s=s'$ or
$s=s_{0}$ for all $s,s'\in S$. The poset $\fs=(S,\leq_{\fs})$ is called an \alert{$m$-star}. (That is, an 
$m$-star is an $m$-antichain with a smallest element adjoined. See Figure~\ref{fig:4_star} for an example.)
We are interested in the formal concepts of the contraordinal scale of an $m$-star, namely the formal
concepts of the formal context $(S,S,\not\geq_{\fs})$. It is clear that 
$\bigl(\emptyset,S\bigr)$ is a formal concept of $(S,S,\not\geq_{\fs})$, and we notice further that, for 
every $B\subseteq S\setminus\{s_{0}\}$ (considered as an object set), we have 
$B^{\not\geq_{\fs}}=S\setminus\bigl(B\cup\{s_{0}\}\bigr)$. Since the object $s_{0}$ satisfies
$s_{0}^{\not\geq_{\fs}}=S\setminus\{s_{0}\}$, we conclude further that
$B^{\not\geq_{\fs}\not\geq_{\fs}}=B\cup\{s_{0}\}$. Thus, $(S,S,\not\geq_{\fs})$ has precisely $2^{m}+1$ 
formal concepts, namely 
\begin{displaymath}
	\bigl(\emptyset,S\bigr)\quad\mbox{and}
	  \quad\Bigl(B\cup\{s_{0}\},S\setminus\bigl(B\cup\{s_{0}\}\bigr)\Bigr)\quad
	  \mbox{for}\;B\subseteq S\setminus\{s_{0}\}.
\end{displaymath}

\begin{figure}
	\centering\scriptsize
	\begin{tikzpicture}
		\def\x{1};
		\def\y{1};
		\draw(2.5*\x,1*\y) node[draw,circle,scale=.8,label=below left:$s_{0}$](v0){};
		\draw(1*\x,2*\y) node[draw,circle,scale=.8,label=below left:$s_{1}$](v1){};
		\draw(2*\x,2*\y) node[draw,circle,scale=.8,label=below left:$s_{2}$](v2){};
		\draw(3*\x,2*\y) node[draw,circle,scale=.8,label=below right:$s_{3}$](v3){};
		\draw(4*\x,2*\y) node[draw,circle,scale=.8,label=below right:$s_{4}$](v4){};
		\draw(v0) -- (v1);
		\draw(v0) -- (v2);
		\draw(v0) -- (v3);
		\draw(v0) -- (v4);
		\draw(6.75*\x,1.5*\y) node{
		  \begin{tabular}{|c||c|c|c|c|c|}
			  \hline
			  $\leq_{\fs}$ & $s_{0}$ & $s_{1}$ & $s_{2}$ & $s_{3}$ & $s_{4}$\\
			  \hline\hline
			  $s_{0}$ & $\times$ & $\times$ & $\times$ & $\times$ & $\times$ \\
			  \hline
			  $s_{1}$ & & $\times$ & & & \\
			  \hline
			  $s_{2}$ & & & $\times$ & & \\
			  \hline
			  $s_{3}$ & & & & $\times$ & \\
			  \hline
			  $s_{4}$ & & & & & $\times$ \\
			  \hline
		  \end{tabular}
		};
		\draw(11*\x,1.5*\y) node{
		  \begin{tabular}{|c||c|c|c|c|c|}
			  \hline
			  $\not\geq_{\fs}$ & $s_{0}$ & $s_{1}$ & $s_{2}$ & $s_{3}$ & $s_{4}$\\
			  \hline\hline
			  $s_{0}$ & & $\times$ & $\times$ & $\times$ & $\times$ \\
			  \hline
			  $s_{1}$ & & & $\times$ & $\times$ & $\times$ \\
			  \hline
			  $s_{2}$ & & $\times$ & & $\times$ & $\times$ \\
			  \hline
			  $s_{3}$ & & $\times$ & $\times$ & & $\times$ \\
			  \hline
			  $s_{4}$ & & $\times$ & $\times$ & $\times$ & \\
			  \hline
		  \end{tabular}
		};
	\end{tikzpicture}
	\caption{A $4$-star, its incidence table and the corresponding contraordinal scale.}
	\label{fig:4_star}
\end{figure}

\subsection{$n$-Chains}
  \label{sec:n_chains}
Let $C=\{c_{1},c_{2},\ldots,c_{n}\}$ be a set. An \alert{$n$-chain} is a poset $\cc=(C,\leq_{\cc})$ satisfying
$c_{i}\leq_{\cc}c_{j}$ if and only if $i\leq j$ for all $i,j\in\{1,2,\ldots,n\}$. (See 
Figure~\ref{fig:4_chain} for an example.)
Clearly, the corresponding contraordinal scale $(C,C,\not\geq_{\cc})$ has precisely $n+1$
formal concepts, namely
\begin{displaymath}
	\bigl(\{c_{1},c_{2},\ldots,c_{i-1}\},\{c_{i},c_{i+1},\ldots,c_{n}\}\bigr)
	  \quad\mbox{for}\;i\in\{1,2,\ldots,n+1\}.
\end{displaymath}
(In the case $i=n+1$, the set $\{c_{i},c_{i+1},\ldots,c_{n}\}$ is to be interpreted as the empty set and in 
the case $i=1$, the set $\{c_{1},c_{2},\ldots,c_{i-1}\}$ is to be interpreted as the empty set.) See for 
instance \cite{muehle12counting}*{Section~3.1} for a more detailed explanation.

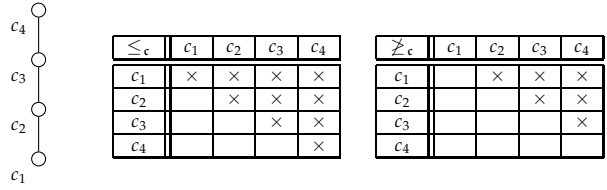
\begin{figure}
	\centering\scriptsize
	\begin{tikzpicture}
		\def\x{1};
		\def\y{.66};
		\draw(1*\x,1*\y) node[draw,circle,scale=.8,label=below left:$c_{1}$](v1){};
		\draw(1*\x,2*\y) node[draw,circle,scale=.8,label=below left:$c_{2}$](v2){};
		\draw(1*\x,3*\y) node[draw,circle,scale=.8,label=below left:$c_{3}$](v3){};
		\draw(1*\x,4*\y) node[draw,circle,scale=.8,label=below left:$c_{4}$](v4){};
		\draw(v1) -- (v2) -- (v3) -- (v4);
		\draw(3.5*\x,2.25*\y) node{
		  \begin{tabular}{|c||c|c|c|c|}
			  \hline
			  $\leq_{\cc}$ & $c_{1}$ & $c_{2}$ & $c_{3}$ & $c_{4}$\\
			  \hline\hline
			  $c_{1}$ & $\times$ & $\times$ & $\times$ & $\times$ \\
			  \hline
			  $c_{2}$ & & $\times$ & $\times$ & $\times$ \\
			  \hline
			  $c_{3}$ & & & $\times$ & $\times$ \\
			  \hline
			  $c_{4}$ & & & & $\times$ \\
			  \hline
		  \end{tabular}
		};
		\draw(7*\x,2.25*\y) node{
		  \begin{tabular}{|c||c|c|c|c|}
			  \hline
			  $\not\geq_{\cc}$ & $c_{1}$ & $c_{2}$ & $c_{3}$ & $c_{4}$\\
			  \hline\hline
			  $c_{1}$ & & $\times$ & $\times$ & $\times$ \\
			  \hline
			  $c_{2}$ & & & $\times$ & $\times$ \\
			  \hline
			  $c_{3}$ & & & & $\times$ \\
			  \hline
			  $c_{4}$ & & & & \\
			  \hline
		  \end{tabular}
		};
	\end{tikzpicture}
	\caption{A $4$-chain, its incidence table and the corresponding contraordinal scale.}
	\label{fig:4_chain}
\end{figure}

\subsection{Convolutions}
  \label{sec:convolution}
Let $u=(u_{1},u_{2},\ldots,u_{k})$ and $v=(v_{1},v_{2},\ldots,v_{k})$ be two vectors of length $k$. The 
\alert{convolution of $u$ and $v$} is defined as 
\begin{align*}
	u\star v=\sum_{i=1}^{k}{u_{i}\cdot v_{k-i+1}}.
\end{align*}

\begin{figure}
	\centering\scriptsize
	\begin{tabular}{c||c|c|c|c|c|c}
		& $j=1$ & $j=2$ & $j=3$ & $j=4$ & $j=5$ & $j=6$\\
		\hline\hline
		$i=1$ & $1$ & $8$ & $34$ & $104$ & $259$ & $560$ \\
		$i=2$ & $4$ & $25$ & $88$ & $234$ & $524$ & $1043$ \\
		$i=3$ & $9$ & $52$ & $170$ & $424$ & $899$ & $1708$ \\
		$i=4$ & $16$ & $89$ & $280$ & $674$ & $1384$ & $2555$ \\
	\end{tabular}
	\caption{The first four rows and six columns of the convolution array of $u_{2}$ and $v_{2}$.}
	\label{fig:convolution_array}
\end{figure}

In this article, we are interested in the convolutions of two very special vectors, given by functions
$u_{m}(h)=h^m$ and $v_{m}(i,h)=(i-1+h)^{m}$. Define the convolution array of $u_{m}$ and $v_{m}$ as the 
rectangular array whose entries $a_{i,j}$ are defined as 
\begin{align*}
	a_{i,j} & =\Bigl(u_{m}(1),u_{m}(2),\ldots,u_{m}(j)\Bigr)
	  \star\Bigl(v_{m}(i,1),v_{m}(i,2),\ldots,v_{m}(i,j)\Bigr)\\
	& = \sum_{k=1}^{j}{u_{m}(k)\cdot v_{m}(i,j-k+1)}\\
	& = \sum_{k=1}^{j}{\bigl(k(i+j-k)\bigr)^{m}}
\end{align*}
See Figure~\ref{fig:convolution_array} for an illustration. In the cases $m=2$ and $m=3$ we recover 
\cite{sloane}*{A213505} and \cite{sloane}*{A213558} respectively.
However, we are not interested in the whole convolution array, but in the sums of the antidiagonals. Define 
\begin{align}\label{eq:count}
	C(m,n) & =\sum_{l=1}^{n}{a_{l,n-l+1}}\\
	\nonumber & = \sum_{l=1}^{n}{\sum_{k=1}^{n-l+1}{\bigl(k(n-k+1)\bigr)^{m}}}\\
	\nonumber & = \sum_{k=1}^{n}{k^{m}(n-k+1)^{m+1}}
\end{align}
to be the sum of the $n$-th antidiagonal of the convolution array of $u_{m}$ and $v_{m}$. The first few 
entries of the sequence $C(2,n)$ (starting with $n=0$) are 
\begin{displaymath}
	0,1,12,68,260,777,1960,4368,\ldots,
\end{displaymath}
see \cite{sloane}*{A213547}, and the first few entries of the sequence $C(3,n)$ (starting with 
$n=0$) are
\begin{displaymath}
	0,1,24,236,1400,6009,20608,59952,\ldots,
\end{displaymath}
see \cite{sloane}*{A213560}. In view of \eqref{eq:count}, proving Theorem~\ref{thm:cardinality} is
equivalent to showing that 
\begin{equation}
  \label{eq:enumeration}
	\bigl\lvert\mathfrak{S}\!\mathfrak{C}^{\bullet}_{m,n}\bigr\rvert=C(m,n+1).
\end{equation}

\section{Embedding $\mathfrak{A\!C}_{m,n}^{\bullet}$ into $\mathfrak{S\!C}_{m,n}^{\bullet}$}
  \label{sec:embedding}
In order to prove Theorem~\ref{thm:cardinality}, we make use of the following observation. Let
$\mathfrak{A\!C}_{m,n}^{\bullet}$ denote the set of proper mergings of an $m$-antichain and an $n$-chain.

\begin{proposition}
  \label{prop:quotient_lattice}
	The lattice $\bigl(\mathfrak{A\!C}_{m,n}^{\bullet},\preceq\bigr)$ is a quotient lattice of 
	$\bigl(\mathfrak{S\!C}_{m,n}^{\bullet},\preceq\bigr)$.
\end{proposition}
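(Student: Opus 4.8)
The plan is to realize $\bigl(\mathfrak{A\!C}_{m,n}^{\bullet},\preceq\bigr)$ as the image of a surjective lattice homomorphism out of $\bigl(\mathfrak{S\!C}_{m,n}^{\bullet},\preceq\bigr)$; its kernel is then the desired congruence. Write the $m$-star as $\fs=(S,\leq_{\fs})$ with $S=\{s_{0},s_{1},\ldots,s_{m}\}$, let $A=\{s_{1},\ldots,s_{m}\}$ be the underlying $m$-antichain, and let $\cc=(C,\leq_{\cc})$ be the $n$-chain. I would define
\[
  \phi\bigl((R,T)\bigr)=\bigl(R\cap(A\times C),\,T\cap(C\times A)\bigr),
\]
so that $\phi$ deletes the $s_{0}$-row of $R$ and the $s_{0}$-column of $T$. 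First I would check that $\phi$ is well defined using Proposition~\ref{prop:classification_mergings}. The bond conditions survive restriction because every subset of $A$ is simultaneously an extent and an intent of the antichain's contraordinal scale, while deleting $s_{0}$ from any extent or intent of $(S,S,\not\geq_{\fs})$ leaves a subset of $A$. The composition conditions survive because $\leq_{\fs}$ restricted to $A\times A$ is the identity relation on $A$, so $(R\cap(A\times C))\circ(T\cap(C\times A))\subseteq(R\circ T)\cap(A\times A)$ is contained in that identity relation, and symmetrically for the other composition; properness is inherited. Order-preservation of $\phi$ is immediate from \eqref{eq:merging_order}.

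The two structural facts driving everything are that, in any $(R,T)\in\mathfrak{S\!C}_{m,n}^{\bullet}$, the bottom element $s_{0}$ of $\fs$ forces
\[
  s_{0}^{R}\;\supseteq\;\bigcup_{i=1}^{m}s_{i}^{R}
  \qquad\text{and}\qquad
  \{c\in C\mid c\,T\,s_{0}\}\;\subseteq\;\bigcap_{i=1}^{m}\{c\in C\mid c\,T\,s_{i}\},
\]
both by transitivity of $\leftarrow_{R,T}$ (if $s_{i}\leftarrow c$ then $s_{0}\leftarrow s_{i}\leftarrow c$, and if $c\leftarrow s_{0}$ then $c\leftarrow s_{0}\leftarrow s_{i}$). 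Using these I would exhibit two sections of $\phi$ over a generic $(R_{A},T_{A})\in\mathfrak{A\!C}_{m,n}^{\bullet}$: the fibrewise maximum $\iota_{\max}(R_{A},T_{A})$, obtained by setting the $s_{0}$-row of $R$ equal to all of $C$ and the $s_{0}$-column of $T$ equal to $\emptyset$, and the fibrewise minimum $\iota_{\min}(R_{A},T_{A})$, obtained by setting the $s_{0}$-row equal to $\bigcup_{i}s_{i}^{R_{A}}$ and the $s_{0}$-column equal to $\{c\mid c^{T_{A}}=A\}=\bigcap_{i}\{c\mid c\,T_{A}\,s_{i}\}$. That both land in $\mathfrak{S\!C}_{m,n}^{\bullet}$ is a direct verification with Proposition~\ref{prop:classification_mergings}; the only genuine point is properness of $\iota_{\min}$, where one uses that $c\in s_{i}^{R_{A}}$ and $c\,T_{A}\,s_{i}$ cannot occur together by properness of $(R_{A},T_{A})$. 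In particular $\phi\circ\iota_{\min}=\phi\circ\iota_{\max}=\mathrm{id}$, so $\phi$ is surjective.

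It remains to show that $\phi$ preserves both lattice operations, which is the heart of the argument. I would prove this by verifying that $\iota_{\max}$ is a right adjoint and $\iota_{\min}$ a left adjoint of $\phi$, that is,
\[
  \phi(R,T)\preceq(R_{A},T_{A})\iff(R,T)\preceq\iota_{\max}(R_{A},T_{A}),
\]
\[
  \iota_{\min}(R_{A},T_{A})\preceq(R,T)\iff(R_{A},T_{A})\preceq\phi(R,T).
\]
The nontrivial implications unwind, via \eqref{eq:merging_order}, to containments between the $s_{0}$-row and $s_{0}$-column of $(R,T)$ and those of the sections. For $\iota_{\max}$ these are vacuous, since its $s_{0}$-row is $C$ and its $s_{0}$-column is $\emptyset$; for $\iota_{\min}$ they are exactly the two transitivity-forced inclusions displayed above (the $T$-inclusion additionally using $T_{A}\supseteq T\cap(C\times A)$). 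A right adjoint forces $\phi$ to preserve joins and a left adjoint forces it to preserve meets, so $\phi$ is a lattice homomorphism.

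Being also surjective, $\phi$ exhibits $\bigl(\mathfrak{A\!C}_{m,n}^{\bullet},\preceq\bigr)\cong\bigl(\mathfrak{S\!C}_{m,n}^{\bullet},\preceq\bigr)/\ker\phi$ as a quotient lattice, which is the claim. I expect the main obstacle to be the homomorphism step: one must correctly pin down the fibrewise extrema $\iota_{\min}$ and $\iota_{\max}$ and confirm the two adjunctions, and here the coupling between the $s_{0}$-data and the antichain part—mediated precisely by the two transitivity-forced inclusions together with properness of the antichain merging—is what makes the restriction commute with $\vee$ and $\wedge$.
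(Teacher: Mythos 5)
Your proposal is correct, and its skeleton coincides with the paper's: your $\phi$ is exactly the restriction map $\eta$ of \eqref{eq:eta}, your well-definedness check is the paper's Lemma~\ref{lem:restriction}, and the conclusion via the Homomorphism Theorem for lattices is the same. Where you genuinely diverge is the step you call the heart of the argument. The paper disposes of it in one line (Proposition~\ref{prop:xi_homomorphism}): by \cite{ganter11merging}*{Theorem~1} the lattice operations are componentwise, $(R_{1},T_{1})\vee(R_{2},T_{2})=(R_{1}\cup R_{2},T_{1}\cap T_{2})$ and dually for $\wedge$, so that restriction to $A\times C$ respectively $C\times A$ commutes with them trivially. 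You instead exhibit $\phi$ as simultaneously a left adjoint (of $\iota_{\max}$) and a right adjoint (of $\iota_{\min}$), which forces preservation of joins and meets without ever invoking the explicit formulas; your two biconditionals do check out, their nontrivial halves resting exactly on the transitivity-forced inclusions $s_{0}^{R}\supseteq\bigcup_{i}s_{i}^{R}$ and $\{c\mid c\,T\,s_{0}\}\subseteq\bigcap_{i}\{c\mid c\,T\,s_{i}\}$, which are equivalent to the facts that every nonempty extent of $(S,S,\not\geq_{\fs})$ contains $s_{0}$ and that the only intent containing $s_{0}$ is $S$, and your properness check for $\iota_{\min}$ (that $c\in s_{i}^{R_{A}}$ and $c\,T_{A}\,s_{i}$ cannot coexist) is the right pivot. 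So the paper's route is shorter, while yours buys something the paper obtains only abstractly as a corollary of Proposition~\ref{prop:quotient_lattice}: each fiber $\eta^{-1}(R_{A},T_{A})$ is the interval from $\iota_{\min}(R_{A},T_{A})$ to $\iota_{\max}(R_{A},T_{A})$ with explicit endpoints, structure that becomes relevant again in Section~\ref{sec:cardinality_fibers}. One remark: both of your sections differ from the paper's $\xi$ of \eqref{eq:xi}, which takes the minimal $s_{0}$-row $\bigcup_{i}s_{i}^{R}$ but the empty $s_{0}$-column and is therefore neither the fibrewise minimum nor maximum; any section suffices for surjectivity, since all three satisfy $\eta\circ\xi=\mathrm{Id}$.
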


Let $\ac=(A,=_{\ac}),\fs=(S,\leq_{\fs}),$ and $\cc=(C,\leq_{\cc})$ be an $m$-antichain, an $m$-star and an 
$n$-chain, respectively, as defined in Sections~\ref{sec:m_stars} and \ref{sec:n_chains}. If we consider the 
restriction $(S\setminus\{s_{0}\},\leq_{\fs})$ we implicitly understand the partial order $\leq_{\fs}$ to be 
restricted to the ground set $A=S\setminus\{s_{0}\}$. Hence, we identify the posets 
$(S\setminus\{s_{0}\},\leq_{\fs})$ and $(A,=_{\ac})$. If we write $S=\{s_{0},s_{1},\ldots,s_{m}\}$, then we 
identify $s_{i}=a_{i}$ for $i\in\{1,2,\ldots,m\}$.

Now let $(R,T)\in\mathfrak{S\!C}_{m,n}^{\bullet}$ and consider the restrictions 
$\overline{R}=R\cap(A\times C)$, and $\overline{T}=T\cap(C\times A)$. Further, if 
$(R,T)\in\mathfrak{A\!C}_{m,n}^{\bullet}$, then define a pair of relations $(R_{o},T_{o})$ with 
$R_{o}\subseteq S\times C$ and $T_{o}\subseteq C\times S$ in the following way:
\begin{align*}
	s\;R_{o}\;c_{j}\quad\mbox{if and only if}\quad & 
		\begin{cases}
			s=s_{0} & \mbox{and there exists some}\;i\in\{1,2,\ldots,m\}\\
			  & \mbox{with}\;a_{i}\;R\;c_{j},\\
			s=a_{i} & \mbox{for some}\;i\in\{1,2,\ldots,m\}\;\mbox{and}\;
			  a_{i}\;R\;c_{j},
		\end{cases} \\
	c_{j}\;T_{o}\;s\quad\mbox{if and only if}\quad &
	  s=a_{i}\;\mbox{for some}\;i\in\{1,2,\ldots,m\}\;\mbox{and}\;c_{j}\;T\;a_{i}.
\end{align*}

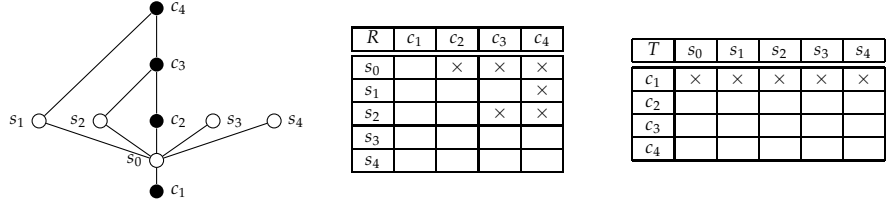
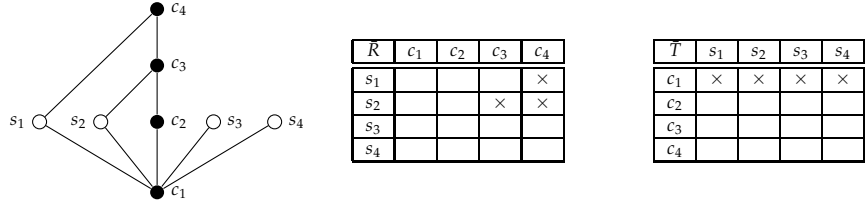
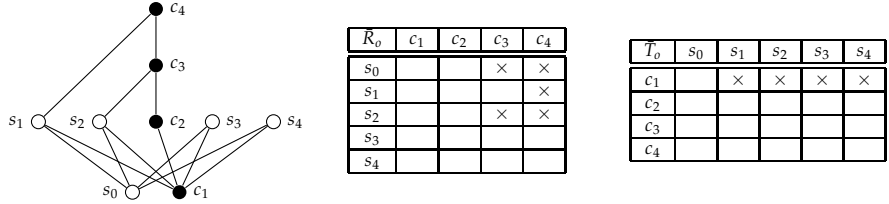
\begin{figure}[t]
	\subfigure[A proper merging of a $4$-star and a $4$-chain, and the corresponding relations 
	  $R$ and $T$.]{\label{fig:restriction_injection_1}
		\begin{tikzpicture}\scriptsize
			\draw(0,0) node{
				\begin{tikzpicture}
					\def\x{1.25};
					\def\y{.75};
					\draw(2.5*\x,1.75*\y) 
					  node[fill=black,circle,scale=.8,label=right:$c_{1}$](c1){};
					\draw(2.5*\x,3*\y) 
					  node[fill=black,circle,scale=.8,label=right:$c_{2}$](c2){};
					\draw(2.5*\x,4*\y) 
					  node[fill=black,circle,scale=.8,label=right:$c_{3}$](c3){};
					\draw(2.5*\x,5*\y) 
					  node[fill=black,circle,scale=.8,label=right:$c_{4}$](c4){};
					\draw(2.5*\x,2.3*\y) 
					  node[draw,circle,scale=.8,label=left:$s_{0}$](s0){};
					\draw(1.25*\x,3*\y) 
					  node[draw,circle,scale=.8,label=left:$s_{1}$](s1){};
					\draw(1.9*\x,3*\y) 
					  node[draw,circle,scale=.8,label=left:$s_{2}$](s2){};
					\draw(3.1*\x,3*\y) 
					  node[draw,circle,scale=.8,label=right:$s_{3}$](s3){};
					\draw(3.75*\x,3*\y) 
					  node[draw,circle,scale=.8,label=right:$s_{4}$](s4){};
					\draw(c1) -- (s0) -- (c2) -- (c3) -- (c4);
					\draw(s0) -- (s1);
					\draw(s0) -- (s2);
					\draw(s0) -- (s3);
					\draw(s0) -- (s4);
					\draw(s1) -- (c4);
					\draw(s2) -- (c3);
				\end{tikzpicture}
			};
			\draw(4,0) node{
				\begin{tabular}{|c|c|c|c|c|}
					\hline
					$R$ & $c_{1}$ & $c_{2}$ & $c_{3}$ & $c_{4}$ \\
					\hline\hline
					$s_{0}$ & & $\times$ & $\times$ & $\times$ \\
					\hline
					$s_{1}$ & & & & $\times$ \\
					\hline
					$s_{2}$ & & & $\times$ & $\times$ \\
					\hline
					$s_{3}$ & & & & \\
					\hline
					$s_{4}$ & & & & \\
					\hline
				\end{tabular}
			};
			\draw(8,0) node{
				\begin{tabular}{|c|c|c|c|c|c|}
					\hline
					$T$ & $s_{0}$ & $s_{1}$ & $s_{2}$ & $s_{3}$ & $s_{4}$ \\
					\hline\hline
					$c_{1}$ & $\times$ & $\times$ & $\times$ & $\times$ & $\times$ \\
					\hline
					$c_{2}$ & & & & & \\
					\hline
					$c_{3}$ & & & & & \\
					\hline
					$c_{4}$ & & & & & \\
					\hline
				\end{tabular}
			};
		\end{tikzpicture}
	}
	\subfigure[The image of $(R,T)$ from Figure~\ref{fig:restriction_injection_1} under the map $\eta$ 
	  is a proper merging of a $4$-antichain and a $4$-chain.]{\label{fig:restriction_injection_2}
		\begin{tikzpicture}\scriptsize
			\draw(0,0) node{
				\begin{tikzpicture}
					\def\x{1.25};
					\def\y{.75};
					\draw(2.5*\x,1.75*\y) 
					  node[fill=black,circle,scale=.8,label=right:$c_{1}$](c1){};
					\draw(2.5*\x,3*\y) 
					  node[fill=black,circle,scale=.8,label=right:$c_{2}$](c2){};
					\draw(2.5*\x,4*\y) 
					  node[fill=black,circle,scale=.8,label=right:$c_{3}$](c3){};
					\draw(2.5*\x,5*\y) 
					  node[fill=black,circle,scale=.8,label=right:$c_{4}$](c4){};
					\draw(1.25*\x,3*\y) 
					  node[draw,circle,scale=.8,label=left:$s_{1}$](s1){};
					\draw(1.9*\x,3*\y) 
					  node[draw,circle,scale=.8,label=left:$s_{2}$](s2){};
					\draw(3.1*\x,3*\y) 
					  node[draw,circle,scale=.8,label=right:$s_{3}$](s3){};
					\draw(3.75*\x,3*\y) 
					  node[draw,circle,scale=.8,label=right:$s_{4}$](s4){};
					\draw(c1) -- (c2) -- (c3) -- (c4);
					\draw(c1) -- (s1);
					\draw(c1) -- (s2);
					\draw(c1) -- (s3);
					\draw(c1) -- (s4);
					\draw(s1) -- (c4);
					\draw(s2) -- (c3);
				\end{tikzpicture}
			};
			\draw(4,0) node{
				\begin{tabular}{|c|c|c|c|c|}
					\hline
					$\bar{R}$ & $c_{1}$ & $c_{2}$ & $c_{3}$ & $c_{4}$ \\
					\hline\hline
					$s_{1}$ & & & & $\times$ \\
					\hline
					$s_{2}$ & & & $\times$ & $\times$ \\
					\hline
					$s_{3}$ & & & & \\
					\hline
					$s_{4}$ & & & & \\
					\hline
				\end{tabular}
			};
			\draw(8,0) node{
				\begin{tabular}{|c|c|c|c|c|}
					\hline
					$\bar{T}$ & $s_{1}$ & $s_{2}$ & $s_{3}$ & $s_{4}$ \\
					\hline\hline
					$c_{1}$ & $\times$ & $\times$ & $\times$ & $\times$ \\
					\hline
					$c_{2}$ & & & & \\
					\hline
					$c_{3}$ & & & & \\
					\hline
					$c_{4}$ & & & & \\
					\hline
				\end{tabular}
			};
		\end{tikzpicture}
	}
	\subfigure[The image of $(\bar{R},\bar{T})$ from  Figure~\ref{fig:restriction_injection_2} under the 
	  injection $\xi$ is again a proper merging of a $4$-star and a $4$-chain.]{
	  \label{fig:restriction_injection_3}
		\begin{tikzpicture}\scriptsize
			\draw(0,0) node{
				\begin{tikzpicture}
					\def\x{1.25};
					\def\y{.75};
					\draw(2.75*\x,1.75*\y) 
					  node[fill=black,circle,scale=.8,label=right:$c_{1}$](c1){};
					\draw(2.5*\x,3*\y) 
					  node[fill=black,circle,scale=.8,label=right:$c_{2}$](c2){};
					\draw(2.5*\x,4*\y) 
					  node[fill=black,circle,scale=.8,label=right:$c_{3}$](c3){};
					\draw(2.5*\x,5*\y) 
					  node[fill=black,circle,scale=.8,label=right:$c_{4}$](c4){};
					\draw(2.25*\x,1.75*\y) 
					  node[draw,circle,scale=.8,label=left:$s_{0}$](s0){};
					\draw(1.25*\x,3*\y) 
					  node[draw,circle,scale=.8,label=left:$s_{1}$](s1){};
					\draw(1.9*\x,3*\y) 
					  node[draw,circle,scale=.8,label=left:$s_{2}$](s2){};
					\draw(3.1*\x,3*\y) 
					  node[draw,circle,scale=.8,label=right:$s_{3}$](s3){};
					\draw(3.75*\x,3*\y) 
					  node[draw,circle,scale=.8,label=right:$s_{4}$](s4){};
					\draw(c1) -- (c2) -- (c3) -- (c4);
					\draw(s0) -- (s1);
					\draw(s0) -- (s2);
					\draw(s0) -- (s3);
					\draw(s0) -- (s4);
					\draw(c1) -- (s1);
					\draw(c1) -- (s2);
					\draw(c1) -- (s3);
					\draw(c1) -- (s4);
					\draw(s1) -- (c4);
					\draw(s2) -- (c3);
				\end{tikzpicture}
			};
			\draw(4,0) node{
				\begin{tabular}{|c|c|c|c|c|}
					\hline
					$\bar{R}_{o}$ & $c_{1}$ & $c_{2}$ & $c_{3}$ & $c_{4}$ \\
					\hline\hline
					$s_{0}$ & & & $\times$ & $\times$ \\
					\hline
					$s_{1}$ & & & & $\times$ \\
					\hline
					$s_{2}$ & & & $\times$ & $\times$ \\
					\hline
					$s_{3}$ & & & & \\
					\hline
					$s_{4}$ & & & & \\
					\hline
				\end{tabular}
			};
			\draw(8,0) node{
				\begin{tabular}{|c|c|c|c|c|c|}
					\hline
					$\bar{T}_{o}$ & $s_{0}$ & $s_{1}$ & $s_{2}$ & $s_{3}$ & $s_{4}$ \\
					\hline\hline
					$c_{1}$ & & $\times$ & $\times$ & $\times$ & $\times$ \\
					\hline
					$c_{2}$ & & & & & \\
					\hline
					$c_{3}$ & & & & & \\
					\hline
					$c_{4}$ & & & & & \\
					\hline
				\end{tabular}
			};
		\end{tikzpicture}
	}
	\caption{An illustration of the maps $\xi$ and $\eta$.}
	\label{fig:restriction_injection}
\end{figure}

We notice that $T$ and $T_{o}$ coincide as sets, but they differ as cross-tables, since $T_{o}$ has an 
additional (but empty) column. $R_{o}$ can be viewed as a copy of the cross-table of $R$, where the union of 
the rows of $R$ is added again as first row. Now let us define two maps
\begin{align}
  \label{eq:eta}
	\eta & : \mathfrak{S\!C}_{m,n}^{\bullet}\to\mathfrak{A\!C}_{m,n}^{\bullet},\quad 
	  (R,T)\mapsto(\overline{R},\overline{T}),\quad\mbox{and}\\
  \label{eq:xi}
	\xi & : \mathfrak{A\!C}_{m,n}^{\bullet}\to\mathfrak{S\!C}_{m,n}^{\bullet},\quad 
	  (R,T)\mapsto(R_{o},T_{o}).
\end{align}
See Figure~\ref{fig:restriction_injection} for an illustration. We have to show that $\eta$ and $\xi$ 
are well-defined.

\begin{lemma}
  \label{lem:restriction}
	If $(R,T)\in\mathfrak{S\!C}_{m,n}^{\bullet}$, then 
	$(\overline{R},\overline{T})\in\mathfrak{A\!C}_{m,n}^{\bullet}$. 
\end{lemma}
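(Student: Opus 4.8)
The plan is to verify, by means of Proposition~\ref{prop:classification_mergings}, that $(\overline{R},\overline{T})$ satisfies the four defining conditions of a merging of $\ac$ and $\cc$ together with properness, using throughout the identification of $\ac=(A,=_{\ac})$ with the restriction of $\fs$ to $A=S\setminus\{s_{0}\}$. The one structural fact I would isolate at the outset concerns the contraordinal scale $(A,A,\not\geq_{\ac})$ of the antichain: since $a_{i}\not\geq_{\ac}a_{j}$ holds exactly when $i\neq j$, one computes $B^{\not\geq_{\ac}}=A\setminus B$ for every $B\subseteq A$, so that $B^{\not\geq_{\ac}\not\geq_{\ac}}=B$. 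Hence every subset of $A$ is simultaneously an extent and an intent of $(A,A,\not\geq_{\ac})$, which will make one half of each bond condition automatic.

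For the two bond conditions, observe that $\overline{R}=R\cap(A\times C)$ is obtained from $R$ by deleting the row indexed by $s_{0}$, while $\overline{T}=T\cap(C\times A)$ is obtained from $T$ by deleting the column indexed by $s_{0}$; none of the remaining rows or columns changes. Thus for $i\in\{1,2,\ldots,m\}$ the row $a_{i}^{\overline{R}}$ equals $s_{i}^{R}$, which is an intent of the chain's scale because $R$ is a bond; and every column $c_{j}^{\overline{R}}$ is automatically an extent of $(A,A,\not\geq_{\ac})$ by the fact isolated above. This shows that $\overline{R}$ is a bond from the scale of $\ac$ to the scale of $\cc$. Symmetrically, each column $a_{i}^{\overline{T}}$ equals $s_{i}^{T}$, an extent of the chain's scale since $T$ is a bond, while every row $c_{j}^{\overline{T}}$ is automatically an intent of $(A,A,\not\geq_{\ac})$; hence $\overline{T}$ is a bond from the scale of $\cc$ to the scale of $\ac$.

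The composition conditions and properness are inherited from $(R,T)$ by inclusion. Since $\overline{R}\subseteq R$ and $\overline{T}\subseteq T$, we have $\overline{T}\circ\overline{R}\subseteq T\circ R\subseteq{\leq_{\cc}}$, which is the fourth condition, and $\overline{R}\cap\overline{T}^{-1}\subseteq R\cap T^{-1}=\emptyset$, which is properness. For the third condition the inclusion only yields $\overline{R}\circ\overline{T}\subseteq R\circ T\subseteq{\leq_{\fs}}$, so one extra step is needed: because $\overline{R}\circ\overline{T}\subseteq A\times A$ and $s_{0}\notin A$, any pair $(a_{i},a_{k})$ in this product satisfies $a_{i}\leq_{\fs}a_{k}$ with $a_{i}\neq s_{0}$, which forces $a_{i}=a_{k}$; that is, $\overline{R}\circ\overline{T}\subseteq{=_{\ac}}$.

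The verification is essentially routine once the antichain-scale fact is in place; the only point requiring genuine attention is the third condition, where the inherited containment in $\leq_{\fs}$ must be sharpened to equality on $A$ using that the restriction of the star order to $A$ collapses to the antichain relation $=_{\ac}$. With all five requirements of Proposition~\ref{prop:classification_mergings} verified, it follows that $(\overline{R},\overline{T})\in\mathfrak{A\!C}_{m,n}^{\bullet}$.
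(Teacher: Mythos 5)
Your proof is correct and takes essentially the same route as the paper: both verify the conditions of Proposition~\ref{prop:classification_mergings}, using that the contraordinal scale of the antichain is the Boolean-lattice context (so every subset of $A$ is an extent and an intent) to settle the bond conditions, and inheriting the composition conditions and properness from $\overline{R}\subseteq R$ and $\overline{T}\subseteq T$. If anything, you are more explicit than the paper at the one delicate point, namely sharpening $\overline{R}\circ\overline{T}\subseteq{\leq_{\fs}}$ to containment in $=_{\ac}$ via the observation that $\leq_{\fs}$ restricted to $A$ collapses to equality, a step the paper subsumes under ``it is easy to see.''
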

\begin{proof}
	Write $A=S\setminus\{s_{0}\}$, and let $(R,T)\in\mathfrak{S\!C}_{m,n}^{\bullet}$. We need to 
	show that $(\overline{R},\overline{T})$ satisfies the conditions from 
	Proposition~\ref{prop:classification_mergings}. First of all, we want to show that $\overline{R}$ is 
	a bond from $(A,A,\neq_{\ac})$ to $(C,C,\not\geq_{\cc})$, and we know that 
	$R\subseteq S\times C$ is a bond from $(S,S,\not\geq_{\fs})$ to $(C,C,\not\geq_{\cc})$. By 
	construction, $\overline{R}\subseteq A\times C$, and we have 
	$a_{i}^{\overline{R}}=a_{i}^{R}$ for $i\in\{1,2,\ldots,m\}$, thus every row of $\overline{R}$ is an 
	intent of $(C,C,\not\geq_{\cc})$. Now let $c\in C$. By definition, we know that $c^{R}$ is an 
	extent of $(S,S,\not\geq_{\fs})$. It follows from the reasoning in Section~\ref{sec:m_stars} that
	either $c^{R}=\emptyset$ or $c^{R}=B\cup\{s_{0}\}$ for some $B\subseteq A$.
	Hence, $c^{\overline{R}}=\emptyset$ or $c^{\overline{R}}=B$ for some $B\subseteq A$. 
	Since $(A,\neq_{\ac})$ is an antichain, the contraordinal scale $(A,A,\neq_{\ac})$ is known to be 
	isomorphic to the formal context of the Boolean lattice with $2^{m}$ elements, and 
	$c^{\overline{R}}$ is thus an extent of this context. The fact that $\overline{T}$ is a bond from 
	$(C,C,\not\geq_{\cc})$ to $(A,A,\neq_{\ac})$ follows analogously.
	
	It is easy to see that $\bigl(\overline{R}\circ\overline{T}\bigr)\subseteq (R\circ T)$ and 
	$\bigl(\overline{T}\circ\overline{R}\bigr)\subseteq (T\circ R)$, proving the remaining two conditions.
\end{proof}

\begin{lemma}
  \label{lem:injection}
	If $(R,T)\in\mathfrak{A\!C}_{m,n}^{\bullet}$, then $(R_{o},T_{o})\in\mathfrak{S\!C}_{m,n}^{\bullet}$.
\end{lemma}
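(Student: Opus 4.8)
The plan is to verify that $(R_o,T_o)$ satisfies the four conditions of Proposition~\ref{prop:classification_mergings} for the $m$-star $\fs$ and the $n$-chain $\cc$, together with the properness condition $R_o\cap T_o^{-1}=\emptyset$; by the remark following that proposition, for posets this is exactly what it means for $(R_o,T_o)$ to lie in $\mathfrak{S\!C}_{m,n}^{\bullet}$. Throughout I would use that $(R,T)$ is a proper merging of the $m$-antichain $\ac$ and the $n$-chain $\cc$, that $(A,A,\neq_{\ac})$ is the Boolean context (so every subset of $A$ is simultaneously an extent and an intent), and the descriptions from Sections~\ref{sec:m_stars} and \ref{sec:n_chains}: the extents of $(S,S,\not\geq_{\fs})$ are $\emptyset$ and the sets $B\cup\{s_{0}\}$ with $B\subseteq A$, the intents of $(S,S,\not\geq_{\fs})$ are $S$ and all subsets of $A$, while the intents of $(C,C,\not\geq_{\cc})$ are the final segments $\{c_{j},c_{j+1},\ldots,c_{n}\}$, which are closed under union.

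First I would check that $R_o$ is a bond from $(S,S,\not\geq_{\fs})$ to $(C,C,\not\geq_{\cc})$. For the rows, the definition gives $a_{i}^{R_o}=a_{i}^{R}$, an intent of $(C,C,\not\geq_{\cc})$ since $R$ is a bond, while $s_{0}^{R_o}=\bigcup_{i=1}^{m}a_{i}^{R}$ is a union of such intents and hence again an intent by the closure property above. For the columns, the definition forces $c_{j}^{R_o}=\emptyset$ when $c_{j}^{R}=\emptyset$ and $c_{j}^{R_o}=c_{j}^{R}\cup\{s_{0}\}$ otherwise; in both cases this is exactly one of the listed extents of $(S,S,\not\geq_{\fs})$. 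This column computation is the crux of the lemma: adjoining the union row $s_{0}$ to $R$ is done precisely so that each nonempty column acquires $s_{0}$ and thereby becomes a legal extent of the star scale. Dually, I would check that $T_o$ is a bond from $(C,C,\not\geq_{\cc})$ to $(S,S,\not\geq_{\fs})$: the row $c_{j}^{T_o}=c_{j}^{T}$ is a subset of $A$ and therefore automatically an intent of $(S,S,\not\geq_{\fs})$, the column $a_{i}^{T_o}=a_{i}^{T}$ is an extent of $(C,C,\not\geq_{\cc})$ because $T$ is a bond, and $s_{0}^{T_o}=\emptyset$ is an extent as well.

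Next I would verify the two composition conditions. For $R_o\circ T_o$ contained in $\leq_{\fs}$, suppose $s\;R_o\;c_{j}$ and $c_{j}\;T_o\;s'$; the second relation forces $s'=a_{j'}$ with $c_{j}\;T\;a_{j'}$. If $s=a_{i}$, then $a_{i}\;R\;c_{j}$ gives $(a_{i},a_{j'})\in R\circ T$, which is contained in $=_{\ac}$, so $a_{i}=a_{j'}$ and $s\leq_{\fs}s'$ holds reflexively; if $s=s_{0}$, then $s_{0}\leq_{\fs}a_{j'}$ since $s_{0}$ is the least element of the star. For $T_o\circ R_o$ contained in $\leq_{\cc}$, suppose $c_{j}\;T_o\;s$ and $s\;R_o\;c_{j'}$; the first relation forces $s=a_{i}$, so the intermediate element is never $s_{0}$, and then $c_{j}\;T\;a_{i}$ together with $a_{i}\;R\;c_{j'}$ yields $(c_{j},c_{j'})\in T\circ R$, which is contained in $\leq_{\cc}$. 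Finally, for properness, any pair in $R_o\cap T_o^{-1}$ would involve some $s=a_{i}$ with both $a_{i}\;R\;c_{j}$ and $c_{j}\;T\;a_{i}$, contradicting $R\cap T^{-1}=\emptyset$. The only real subtlety is the bookkeeping of $s_{0}$: it carries the union of the $R$-rows yet is attached to no element under $T_o$, which is exactly what makes both the column condition for $R_o$ and the composition $T_o\circ R_o\subseteq{\leq_{\cc}}$ go through.
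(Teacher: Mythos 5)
Your proof is correct and takes essentially the same approach as the paper: you verify the bond conditions and the two composition conditions of Proposition~\ref{prop:classification_mergings} for $(R_{o},T_{o})$, with the same key observations that the $s_{0}$-row of $R_{o}$ is the union of the rows of $R$ (hence an intent of the chain scale) and that $T_{o}$ never relates any $c\in C$ to $s_{0}$. The only cosmetic differences are that you justify the $s_{0}$-row being an intent via closure of final segments under union where the paper notes it equals $a_{i}^{R}$ for a suitable $i$, and that you check $R_{o}\cap T_{o}^{-1}=\emptyset$ explicitly where the paper leaves it implicit in its composition argument.
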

\begin{proof}
	Let $S=A\cup\{s_{0}\}$, where $A=\{a_{1},a_{2},\ldots,a_{m}\}$ is the ground set of the antichain 
	$\ac=(A,=_{\ac})$. For every $i\in\{1,2,\ldots,m\}$, we have $a_{i}^{R_{o}}=a_{i}^{R}$. Since 
	$R$ is a bond from $(A,A,\neq_{\ac})$ to $(C,C,\not\geq_{\cc})$, we find that $a_{i}\;R\;c_{j}$
	implies $a_{i}\;R\;c_{k}$ for all $k\geq j$. Hence, $s_{0}^{R_{o}}=a_{i}^{R}$ for some 
	$a_{i}\in A$, and thus every row of $R_{o}$ is an intent of $(C,C,\not\geq_{\cc})$. If
	$c\in C$, then by construction $c^{R_{o}}=\emptyset$ or $c^{R_{o}}=c^{R}\cup\{s_{0}\}$, and thus 
	every column of $R_{o}$ is an extent of $(S,S,\not\geq_{\fs})$. For every $i\in\{1,2,\ldots,m\}$, we 
	have $a_{i}^{T_{o}}=a_{i}^{T}$, and $s_{0}^{T_{o}}=\emptyset$. Hence, every column of $T_{o}$ 
	is an extent of $(C,C,\not\geq_{\cc})$. Moreover, for $c\in C$, we have $c^{T_{o}}=c^{T}$, 
	and thus every row of $T_{o}$ is an intent of $(S,S,\not\geq_{\fs})$.
	
	Consider the relational product $R_{o}\circ T_{o}$, and let $(s,s')\in R_{o}\circ T_{o}$. By 
	definition, there exists some $c\in C$ with $s\;R_{o}\;c$ and $c\;T_{o}\;s'$. By construction, 
	$s'\neq s_{0}$, and for every pair $(s,s')\in R_{o}\circ T_{o}$ with $s\neq s_{0}$, we have 
	$(s,s')\in R\circ T$, and thus $s=s'$, since $R\circ T$ is contained in $=_{\ac}$. This is, however,
	a contradiction to $R\cap T^{-1}=\emptyset$. Thus, $R_{o}\circ T_{o}$ can only contain pairs of the 
	form $(s_{0},s')$. These pairs satisfy $s_{0}\leq_{\fs}s'$ by definition of the order 
	relation $\leq_{\fs}$, and we conclude that $R_{o}\circ T_{o}$ is contained in 
	$\leq_{\fs}$. Now let $(c,c')\in T_{o}\circ R_{o}$, and let $s\in S$ with $c\;T_{o}\;s$ and 
	$s\;R_{o}\;c'$. By construction, $T_{o}$ does not contain a pair of the form
	$(c,s_{0})$, and if $s\neq s_{0}$, then $c\leq_{\cc} c'$ since $T\circ R$ is 
	contained in $\leq_{\cc}$, which completes the proof.
\end{proof}

Let us collect some properties of $\eta$ and $\xi$. 

\begin{lemma}
  \label{lem:properties_eta_xi}
	The map $\eta$ is surjective, and the map $\xi$ is injective.
\end{lemma}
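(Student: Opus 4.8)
The plan is to reduce both assertions to the single identity $\eta\circ\xi=\mathrm{id}$ on $\mathfrak{A\!C}_{m,n}^{\bullet}$, and then to invoke the elementary facts that a map possessing a left inverse is injective and a map possessing a right inverse is surjective. Concretely, once we know that $\eta\bigl(\xi(R,T)\bigr)=(R,T)$ for every $(R,T)\in\mathfrak{A\!C}_{m,n}^{\bullet}$, it follows at once that $\xi$ is injective (since $\eta$ is then a left inverse of $\xi$) and that $\eta$ is surjective (since $\xi$ is then a right inverse of $\eta$, so every element of $\mathfrak{A\!C}_{m,n}^{\bullet}$ lies in the image of $\eta$). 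Note that Lemmas~\ref{lem:restriction} and \ref{lem:injection} have already shown that $\eta$ and $\xi$ are well-defined, so the composite $\eta\circ\xi$ makes sense.

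To verify the identity, I would fix $(R,T)\in\mathfrak{A\!C}_{m,n}^{\bullet}$, form $\xi(R,T)=(R_{o},T_{o})$, and then compute $\eta(R_{o},T_{o})=(\overline{R_{o}},\overline{T_{o}})$ coordinate by coordinate. For the first coordinate, the restriction $\overline{R_{o}}=R_{o}\cap(A\times C)$ retains only the rows indexed by $A=\{a_{1},\ldots,a_{m}\}$; but by the very definition of $R_{o}$ we have $a_{i}\;R_{o}\;c_{j}$ if and only if $a_{i}\;R\;c_{j}$ for each $i\in\{1,2,\ldots,m\}$, so the added $s_{0}$-row is discarded and $\overline{R_{o}}=R$. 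For the second coordinate, recall from the discussion preceding \eqref{eq:eta} that $T$ and $T_{o}$ coincide as sets and differ only in that $T_{o}$ carries an extra, empty $s_{0}$-column; indeed $c_{j}\;T_{o}\;s$ forces $s=a_{i}$ for some $i$, so $T_{o}$ contains no pair of the form $(c_{j},s_{0})$. Hence every pair of $T_{o}$ already lies in $C\times A$, and the restriction $\overline{T_{o}}=T_{o}\cap(C\times A)$ leaves $T_{o}=T$ unchanged. This yields $\eta\bigl(\xi(R,T)\bigr)=(R,T)$, as desired.

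I do not anticipate a genuine obstacle here: the argument is purely a matter of unwinding the definitions of $R_{o}$, $T_{o}$ and of the restriction operators $\overline{(\cdot)}$. The one point that warrants a moment of care is the second coordinate, where one must confirm that restricting $T_{o}$ to $C\times A$ discards nothing rather than altering the relation; this is exactly the observation that the $s_{0}$-column of $T_{o}$ is empty. Everything else is immediate from the componentwise description of $\xi$ and $\eta$.
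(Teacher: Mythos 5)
Your proposal is correct and follows essentially the same route as the paper: both establish $\eta\circ\xi=\mathrm{Id}_{\mathfrak{A\!C}_{m,n}^{\bullet}}$ by unwinding the definitions (the restriction discards the added $s_{0}$-row of $R_{o}$ and the empty $s_{0}$-column of $T_{o}$) and then deduce surjectivity of $\eta$ and injectivity of $\xi$ from this one identity. The paper merely spells out the standard right-inverse/left-inverse arguments explicitly where you cite them as elementary facts, which is an inessential difference.
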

\begin{proof}
	Let $(R,T)\in\mathfrak{A\!C}_{m,n}^{\bullet}$, and let $(R_{o},T_{o})=\xi(R,T)$. By construction, 
	$R_{o}$ arises from $R$ by adding elements of the form $(s_{0},\cdot)$, and $T_{o}=T$. Consider
	$(\overline{R}_{o},\overline{T}_{o})=\eta(R_{o},T_{o})$. By construction, $\overline{R}_{o}$ contains
	all elements in $R_{o}$, except those of the form $(s_{0},\cdot)$, and analogously for
	$\overline{T}_{o}$. Thus, $(\overline{R}_{o},\overline{T}_{o})=(R,T)$, and we conclude that 
	$\eta\circ\xi=\mbox{Id}_{\mathfrak{A\!C}_{m,n}^{\bullet}}$.
	
	Suppose there exists $(R,T)\in\mathfrak{A\!C}_{m,n}^{\bullet}$ with $(R,T)\notin\mbox{Im}(\eta)$.
	By definition, we have $\xi(R,T)\in\mathfrak{S\!C}_{m,n}^{\bullet}$, and thus 
	$\eta(\xi(R,T))\in\mathfrak{A\!C}_{m,n}^{\bullet}$. We have shown in the previous paragraph that
	$\eta(\xi(R,T))=(R,T)$, which contradicts $(R,T)\notin\mbox{Im}(\eta)$. Thus, $\eta$ is surjective.
	
	Now let $(R_{1},T_{1}),(R_{2},T_{2})\in\mathfrak{A\!C}_{m,n}^{\bullet}$ with 
	$\xi(R_{1},T_{1})=\xi(R_{2},T_{2})$. Since $\eta$ is a map, this implies that
	$\eta(\xi(R_{1},T_{1}))=\eta(\xi(R_{2},T_{2}))$, and we obtain with the reasoning in the first 
	paragraph that $(R_{1},T_{1})=(R_{2},T_{2})$. Thus, $\xi$ is injective.
\end{proof}

\begin{proposition}
  \label{prop:xi_homomorphism}
	The maps $\eta$ and $\xi$ defined in \eqref{eq:eta} and \eqref{eq:xi} are order-preserving 
	lattice-homomorphisms. 
\end{proposition}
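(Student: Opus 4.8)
The plan is to reduce everything to explicit componentwise descriptions of the lattice operations in both $\bigl(\mathfrak{S\!C}_{m,n}^{\bullet},\preceq\bigr)$ and $\bigl(\mathfrak{A\!C}_{m,n}^{\bullet},\preceq\bigr)$, and then to read off the homomorphism properties from the way $\eta$ and $\xi$ interact with unions and intersections of relations. Order-preservation is immediate from \eqref{eq:merging_order}: if $(R_1,T_1)\preceq(R_2,T_2)$ then $R_1\subseteq R_2$ and $T_1\supseteq T_2$, and since $\overline{R}=R\cap(A\times C)$ and $\overline{T}=T\cap(C\times A)$ are intersections with fixed sets, while the appended $s_0$-row $s_0^{R_o}=\bigcup_{i=1}^{m}a_i^{R}$ is monotone in $R$, both maps respect $\subseteq$ in the $R$-coordinate and $\supseteq$ in the $T$-coordinate.

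The enabling step is a lemma stating that in \emph{each} of the two lattices the join of $(R_1,T_1)$ and $(R_2,T_2)$ is $(R_1\cup R_2,\,T_1\cap T_2)$ and the meet is $(R_1\cap R_2,\,T_1\cup T_2)$. I would prove this by checking the four conditions of Proposition~\ref{prop:classification_mergings} together with properness. Closure of bonds under $\cup$ and $\cap$ holds because the pertinent intents (tails of the chain; subsets of $A$ or $S$ for the star) and extents (heads of the chain; $\emptyset$ or $B\cup\{s_0\}$ for the star; arbitrary subsets for the antichain) are each closed under union and intersection. Conditions~(3) and~(4) survive because a composite witnessed by $s\,(R_1\cup R_2)\,c$ and $c\,(T_1\cap T_2)\,s'$ can be realised with a single index, so the pair already lies in $R_1\circ T_1$ or in $R_2\circ T_2$, and dually for the meet; properness is preserved since $R_i\cap T_i^{-1}=\emptyset$ for each $i$. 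As these are the componentwise $\cup$/$\cap$, the candidate is simultaneously a bound and below (resp.\ above) every other bound, hence the genuine join (resp.\ meet).

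Granting this lemma, $\eta$ is settled at once: restriction distributes over both $\cup$ and $\cap$, so $\eta(R_1\cup R_2,T_1\cap T_2)=(\overline{R_1}\cup\overline{R_2},\overline{T_1}\cap\overline{T_2})=\eta(R_1,T_1)\vee\eta(R_2,T_2)$ and likewise for $\wedge$; thus $\eta$ is a lattice homomorphism, which is exactly what powers Proposition~\ref{prop:quotient_lattice}. For $\xi$, recall that $T_o=T$ (with an empty $s_0$-column) and that $R_o$ is $R$ with the appended row $s_0^{R_o}=\bigcup_{i=1}^{m}a_i^{R}$. The $T$-coordinate is preserved under both operations automatically, since $\xi$ is the identity there and the appended $s_0$-column stays empty; and the $a_i$-rows of $R$ are preserved verbatim. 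Join-preservation is then clean, because $\bigcup_i a_i^{R_1\cup R_2}=\bigl(\bigcup_i a_i^{R_1}\bigr)\cup\bigl(\bigcup_i a_i^{R_2}\bigr)$, so $(R_1\cup R_2)_o=(R_1)_o\cup(R_2)_o$ and $\xi$ commutes with $\vee$.

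The whole proposition therefore collapses to meet-preservation for $\xi$, and I expect this to be the crux. Comparing $\xi\bigl((R_1,T_1)\wedge(R_2,T_2)\bigr)$ with $\xi(R_1,T_1)\wedge\xi(R_2,T_2)$, everything matches except possibly the appended row, so the claim is equivalent to the single identity $\bigcup_i a_i^{R_1\cap R_2}=\bigl(\bigcup_i a_i^{R_1}\bigr)\cap\bigl(\bigcup_i a_i^{R_2}\bigr)$, i.e.\ that for every $c_j$ one has $s_0\in c_j^{(R_1\cap R_2)_o}$ iff $s_0\in c_j^{(R_1)_o}\cap c_j^{(R_2)_o}$. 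I would attack this through the column-extent structure: each $c_j^{(R_k)_o}$ is a star-extent, hence $\emptyset$ or of the form $B\cup\{s_0\}$, and the delicate point is that intersecting two such columns must not retain $s_0$ once all of the $a_i$-entries have cancelled. This is precisely where one must use that the rows are nested tails of the chain, so that the union defining the $s_0$-row is governed by a single dominating row rather than by indices chosen independently in $R_1$ and $R_2$; controlling this interaction is, I expect, the main obstacle, and establishing the displayed identity completes the proof that $\xi$ is a lattice homomorphism.
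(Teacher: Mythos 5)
Your reduction is accurate up to its final step, and up to that point it matches the paper: order-preservation is immediate from \eqref{eq:merging_order}; the componentwise description of $\vee$ and $\wedge$ in both lattices is exactly what the paper imports from \cite{ganter11merging}*{Theorem~1} (you re-verify it via Proposition~\ref{prop:classification_mergings}, which is fine); $\eta$ is handled by distributivity of restriction over $\cup$ and $\cap$; and your join computation for $\xi$ is the paper's. But the one identity you defer, $\bigcup_i a_i^{R_1\cap R_2}=\bigl(\bigcup_i a_i^{R_1}\bigr)\cap\bigl(\bigcup_i a_i^{R_2}\bigr)$, is not merely ``the main obstacle'' --- it is false, so the step you postpone cannot be carried out. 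Take $m=2$, $n=1$, $T_1=T_2=\emptyset$, $R_1=\{(a_1,c_1)\}$, $R_2=\{(a_2,c_1)\}$; both pairs lie in $\mathfrak{A\!C}_{2,1}^{\bullet}$. Then $(R_1,T_1)\wedge(R_2,T_2)=(\emptyset,\emptyset)$ and $\xi(\emptyset,\emptyset)=(\emptyset,\emptyset)$, whereas $\xi(R_1,T_1)\wedge\xi(R_2,T_2)=\bigl(\{(s_0,c_1)\},\emptyset\bigr)$, which is a genuine proper merging of the $2$-star and the $1$-chain (column $\{s_0\}$ is a star-extent with $B=\emptyset$; it is precisely the second element of the fiber $\eta^{-1}(\emptyset,\emptyset)$ counted by Lemma~\ref{lem:count_valid} with $k_1=2$, $k_2=l=0$). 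The underlying logical point is the one your reduction exposes: an existential quantifier commutes with disjunction but not with conjunction, so $(s_0,c)\in(R_1)_{o}\cap(R_2)_{o}$ only requires \emph{separate} witnesses $a\;R_1\;c$ and $a'\;R_2\;c$, while $(s_0,c)\in(R_1\cap R_2)_{o}$ requires a common one. Your proposed rescue via nested tails fails for the same reason: each $R_k$ does have a dominating row $i_k$ with $s_0^{(R_k)_{o}}=a_{i_k}^{R_k}$, but $i_1$ and $i_2$ may differ, and then $a_{i_1}^{R_1}\cap a_{i_2}^{R_2}$ strictly contains $\bigcup_i\bigl(a_i^{R_1}\cap a_i^{R_2}\bigr)$, exactly as in the example above.

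You should know that your caution here was better placed than the paper's own argument, which commits precisely this quantifier slip: it states that $(s_0,c)\in(R_{1}\cap R_{2})_{o}$ is equivalent to the existence of some $a$ with $a\;R_{1}\;c$ and $a\;R_{2}\;c$, ``which means that $(s_{0},c)\in (R_{1})_{o}\cap (R_{2})_{o}$'' --- true in one direction only, and the needed converse is false. So the meet half of Proposition~\ref{prop:xi_homomorphism} is incorrect as stated: $\xi$ is an injective, order-preserving join-homomorphism but not a meet-homomorphism (in particular its image is not meet-closed). The damage is localized, and your own architecture shows why: Proposition~\ref{prop:quotient_lattice} needs only that $\eta$ is a surjective lattice homomorphism, and the enumeration in Section~\ref{sec:enumeration} uses $\xi$ only through $\eta\circ\xi=\mathrm{Id}$ and order-preservation, all of which both you and the paper establish correctly. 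The right way to finish your write-up is therefore not to keep hunting for a proof of the displayed identity --- none exists --- but to prove the proposition with the claim for $\xi$ weakened to ``order-preserving join-homomorphism,'' noting the counterexample for meets.
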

\begin{proof}
	Let us start with $\eta$, and let $(R_{1},T_{1}),(R_{2},T_{2})\in\mathfrak{S\!C}_{m,n}^{\bullet}$ be
	two proper mergings of an $m$-star and an $n$-chain, satisfying $(R_{1},T_{1})\preceq(R_{2},T_{2})$.
	This means by definition of $\preceq$, see \eqref{eq:merging_order}, that 
	$R_{1}\subseteq R_{2}$ and $T_{1}\supseteq T_{2}$. By definition of $\eta$, we have 
	$\overline{R}_{i}=R_{i}\setminus\{s_{0}^{R_{i}}\}$ and
	$\overline{T}_{i}=T_{i}\setminus\{s_{0}^{T_{i}}\}$ for $i\in\{1,2\}$. Thus, it follows immediately that 
	$(\overline{R}_{1},\overline{T}_{1})\preceq(\overline{R}_{2},\overline{T}_{2})$.
	
	For showing that $\eta$ is a lattice-homomorphism, we need to show that it is compatible with the 
	lattice operations. This means, we need to show that for every 
	$(R_{1},T_{1}),(R_{2},T_{2})\in\mathfrak{S\!C}_{m,n}^{\bullet}$, we have
	\begin{align*}
		\eta\bigl((R_{1},T_{1})\vee(R_{2},T_{2})\bigr)
		  & =\eta\bigl((R_{1},T_{1})\bigr)\vee\eta\bigl((R_{2},T_{2})\bigr),\quad\mbox{and}\\
		\eta\bigl((R_{1},T_{1})\wedge(R_{2},T_{2})\bigr)
		  & =\eta\bigl((R_{1},T_{1})\bigr)\wedge\eta\bigl((R_{2},T_{2})\bigr).
	\end{align*}
	It was shown in \cite{ganter11merging}*{Theorem~1} that
	\begin{align*}
		(R_{1},T_{1})\vee(R_{2},T_{2}) & =(R_{1}\cup R_{2},T_{1}\cap T_{2}),\quad\mbox{and}\\
		(R_{1},T_{1})\wedge(R_{2},T_{2}) & = (R_{1}\cap R_{2},T_{1}\cup T_{2}).
	\end{align*}
	Thus, we have to show that 
	\begin{align*}
		\bigl(\overline{R_{1}\cup R_{2}},\overline{T_{1}\cap T_{2}}\bigr)
		  & = \bigl(\overline{R}_{1}\cup\overline{R}_{2},
		  \overline{T}_{1}\cap\overline{T}_{2}\bigr),\quad\mbox{and}\\
		\bigl(\overline{R_{1}\cap R_{2}},\overline{T_{1}\cup T_{2}}\bigr)
		  & = \bigl(\overline{R}_{1}\cap\overline{R}_{2},
		  \overline{T}_{1}\cup\overline{T}_{2}\bigr).
	\end{align*}
	Since $\overline{(\cdot)}$ is a restriction operator, these equalities are trivially satisfied.
	
	\smallskip
	
	Let now $(R_{1},T_{1}),(R_{2},T_{2})\in\mathfrak{A\!C}_{m,n}^{\bullet}$ be two proper mergings of an
	$m$-antichain and an $n$-chain, satisfying $(R_{1},T_{1})\preceq(R_{2},T_{2})$. 
	By construction, $(T_{i})_{o}=T_{i}$ (considered as sets) for $i\in\{1,2\}$. Moreover, for 
	$i\in\{1,2\}$, the set $(R_{i})_{o}$ is obtained from $R_{i}$ by adding pairs $(s_{0},c_{k})$
	for all $c_{k}\in C$ satisfying $a_{j}\;R_{i}\;c_{k}$ for some $a_{j}\in A$. 
	If $R_{1}\subseteq R_{2}$, then it is clear that $(R_{2})_{o}$ has at least as many additional 
	relations as $(R_{1})_{o}$, hence implying $(R_{1})_{o}\subseteq (R_{2})_{o}$. This proves
	$\bigl((R_{1})_{o},(T_{1})_{o}\bigr)\preceq\bigl((R_{2})_{o},(T_{2})_{o}\bigr)$, which implies that
	$\xi$ is order-preserving.
	
	With the reasoning from above, showing that $\xi$ is a lattice-homomorphism reduces to showing 
	that for every $(R_{1},T_{1}),(R_{2},T_{2})\in\mathfrak{A\!C}_{m,n}^{\bullet}$, we have
	\begin{align*}
		\bigl((R_{1}\cup R_{2})_{o},(T_{1}\cap T_{2})_{o}\bigr)
		  & = \bigl((R_{1})_{o}\cup (R_{2})_{o},(T_{1})_{o}\cap (T_{2})_{o}\bigr),\quad\mbox{and}\\
		\bigl((R_{1}\cap R_{2})_{o},(T_{1}\cup T_{2})_{o}\bigr)
		  & = \bigl((R_{1})_{o}\cap (R_{2})_{o},(T_{1})_{o}\cup (T_{2})_{o}\bigr).
	\end{align*}
	Since by construction $(T_{i})_{o}=T_{i}$ for $i\in\{1,2\}$, we can restrict our attention to 
	the relations $R_{1}$ and $R_{2}$, and it is sufficient to focus on the behavior of 
	$s_{0}^{R_{1}}$ and $s_{0}^{R_{2}}$, since the other rows remain unchanged. Clearly,
	$(s_{0},c)\in(R_{1}\cup R_{2})_{o}$ is equivalent to the existence of some $a\in A$ with
	$a\;R_{1}\;c$ or $a\;R_{2}\;c$, which means that $(s_{0},c)\in (R_{1})_{o}\cup (R_{2})_{o}$.
	Similarly, $(s_{0},c)\in(R_{1}\cap R_{2})_{o}$ is equivalent to the existence of some $a\in A$ with
	$a\;R_{1}\;c$ and $a\;R_{2}\;c$, which means that $(s_{0},c)\in (R_{1})_{o}\cap (R_{2})_{o}$, and 
	we are done.
\end{proof}

\begin{proof}[Proof of Proposition~\ref{prop:quotient_lattice}]
	Lemma~\ref{lem:properties_eta_xi} and Proposition~\ref{prop:xi_homomorphism} imply that $\eta$ is a 
	surjective lattice homomorphism from $\bigl(\mathfrak{S\!C}_{m,n}^{\bullet},\preceq\bigr)$ to
	$\bigl(\mathfrak{A\!C}_{m,n}^{\bullet},\preceq\bigr)$. Then, the Homomorphism Theorem for lattices,
	see for instance \cite{davey02introduction}*{Theorem~6.9}, implies the result.
\end{proof}

A consequence of Proposition~\ref{prop:quotient_lattice} is that for 
$(R,T)\in\mathfrak{A\!C}_{m,n}^{\bullet}$ the fiber $\eta^{-1}(R,T)$ is an interval in 
$\bigl(\mathfrak{S\!C}_{m,n}^{\bullet},\preceq\bigr)$, and all the fibers of $\eta$ are 
disjoint. We will use this property for the enumeration of the proper mergings of an $m$-star and an 
$n$-chain in the next section. Figure~\ref{fig:mergings_chain_star} shows the lattice of proper mergings 
of a $3$-star and a $1$-chain, and the shaded edges indicate how the lattice of proper mergings of a 
$3$-antichain and a $1$-chain arises as a quotient lattice. 
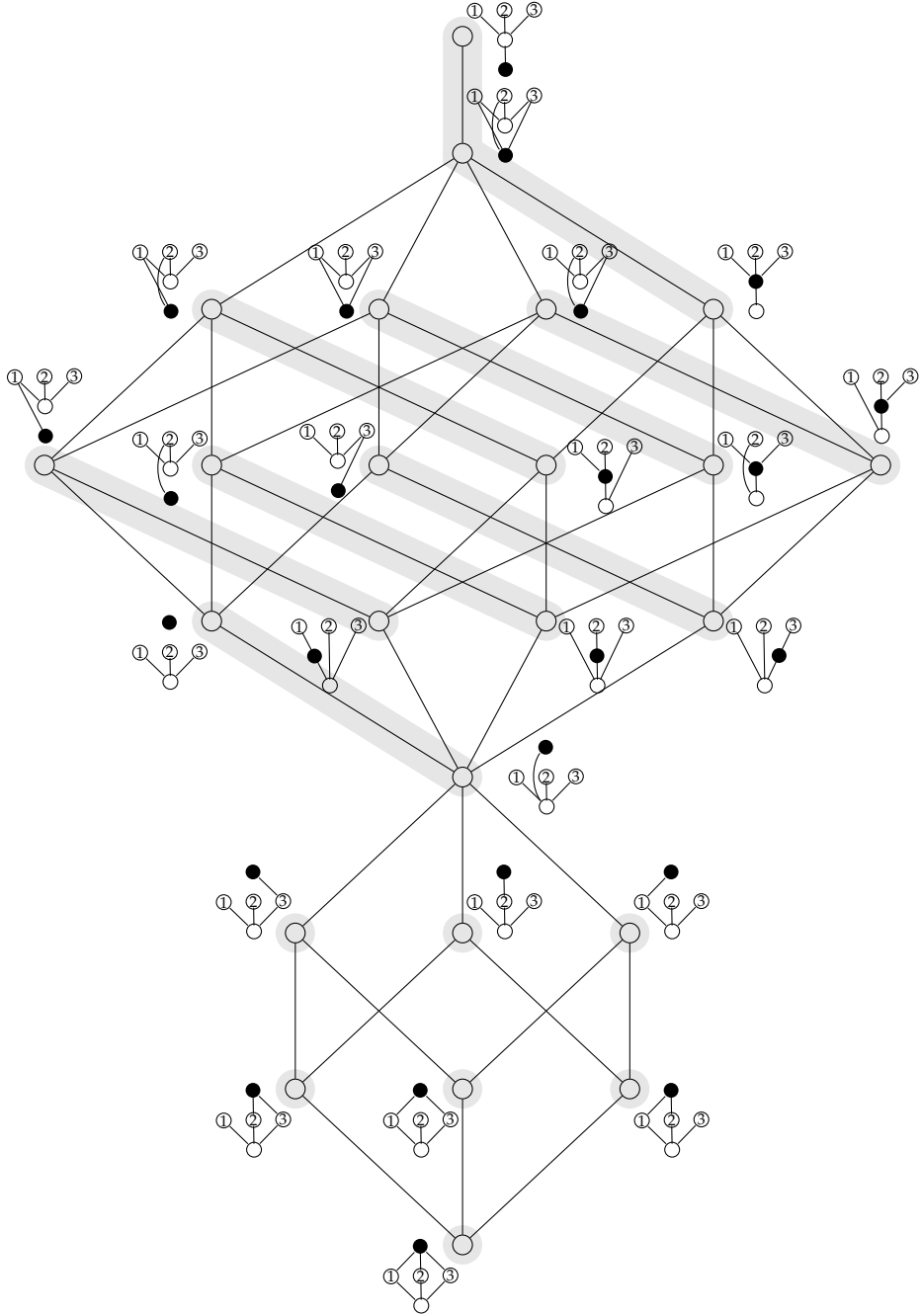
\begin{figure}
	\centering
	\begin{tikzpicture}
	\def\x{2.25};
	\def\y{2.1};
	\def\r{.8};
	\def\rs{.8};
	\draw(3.5*\x,1*\y) node[circle,draw,scale=\r](v1){};
	\draw(2.5*\x,2*\y) node[circle,draw,scale=\r](v2){};
	\draw(3.5*\x,2*\y) node[circle,draw,scale=\r](v3){};
	\draw(4.5*\x,2*\y) node[circle,draw,scale=\r](v4){};
	\draw(2.5*\x,3*\y) node[circle,draw,scale=\r](v5){};
	\draw(3.5*\x,3*\y) node[circle,draw,scale=\r](v6){};
	\draw(4.5*\x,3*\y) node[circle,draw,scale=\r](v7){};
	\draw(3.5*\x,4*\y) node[circle,draw,scale=\r](v8){};
	\draw(2*\x,5*\y) node[circle,draw,scale=\r](v9){};
	\draw(3*\x,5*\y) node[circle,draw,scale=\r](v10){};
	\draw(4*\x,5*\y) node[circle,draw,scale=\r](v11){};
	\draw(5*\x,5*\y) node[circle,draw,scale=\r](v12){};
	\draw(1*\x,6*\y) node[circle,draw,scale=\r](v13){};
	\draw(2*\x,6*\y) node[circle,draw,scale=\r](v14){};
	\draw(3*\x,6*\y) node[circle,draw,scale=\r](v15){};
	\draw(4*\x,6*\y) node[circle,draw,scale=\r](v16){};
	\draw(5*\x,6*\y) node[circle,draw,scale=\r](v17){};
	\draw(6*\x,6*\y) node[circle,draw,scale=\r](v18){};
	\draw(2*\x,7*\y) node[circle,draw,scale=\r](v19){};
	\draw(3*\x,7*\y) node[circle,draw,scale=\r](v20){};
	\draw(4*\x,7*\y) node[circle,draw,scale=\r](v21){};
	\draw(5*\x,7*\y) node[circle,draw,scale=\r](v22){};
	\draw(3.5*\x,8*\y) node[circle,draw,scale=\r](v23){};
	\draw(3.5*\x,8.75*\y) node[circle,draw,scale=\r](v24){};
	\draw(v1) -- (v2);
	\draw(v1) -- (v3);
	\draw(v1) -- (v4);
	\draw(v2) -- (v5);
	\draw(v2) -- (v6);
	\draw(v3) -- (v5);
	\draw(v3) -- (v7);
	\draw(v4) -- (v6);
	\draw(v4) -- (v7);
	\draw(v5) -- (v8);
	\draw(v6) -- (v8);
	\draw(v7) -- (v8);
	\draw(v8) -- (v9);
	\draw(v8) -- (v10);
	\draw(v8) -- (v11);
	\draw(v8) -- (v12);
	\draw(v9) -- (v13);
	\draw(v9) -- (v14);
	\draw(v9) -- (v15);
	\draw(v10) -- (v13);
	\draw(v10) -- (v16);
	\draw(v10) -- (v17);
	\draw(v11) -- (v14);
	\draw(v11) -- (v16);
	\draw(v11) -- (v18);
	\draw(v12) -- (v15);
	\draw(v12) -- (v17);
	\draw(v12) -- (v18);
	\draw(v13) -- (v19);
	\draw(v13) -- (v20);
	\draw(v14) -- (v19);
	\draw(v14) -- (v21);
	\draw(v15) -- (v20);
	\draw(v15) -- (v21);
	\draw(v16) -- (v19);
	\draw(v16) -- (v22);
	\draw(v17) -- (v20);
	\draw(v17) -- (v22);
	\draw(v18) -- (v21);
	\draw(v18) -- (v22);
	\draw(v19) -- (v23);
	\draw(v20) -- (v23);
	\draw(v21) -- (v23);
	\draw(v22) -- (v23);
	\draw(v23) -- (v24);
	\draw(3.25*\x,.8*\y) node{\sOne{1}{\rs}};
	\draw(2.25*\x,1.8*\y) node{\sTwo{1}{\rs}};
	\draw(3.25*\x,1.8*\y) node{\sThree{1}{\rs}};
	\draw(4.75*\x,1.8*\y) node{\sFour{1}{\rs}};
	\draw(2.25*\x,3.2*\y) node{\sFive{1}{\rs}};
	\draw(3.75*\x,3.2*\y) node{\sSix{1}{\rs}};
	\draw(4.75*\x,3.2*\y) node{\sSeven{1}{\rs}};
	\draw(4*\x,4*\y) node{\sEight{1}{\rs}};
	\draw(1.75*\x,4.8*\y) node{\sNine{1}{\rs}};
	\draw(2.7*\x,4.8*\y) node{\sTen{1}{\rs}};
	\draw(4.3*\x,4.8*\y) node{\sEleven{1}{\rs}};
	\draw(5.3*\x,4.8*\y) node{\sTwelve{1}{\rs}};
	\draw(1*\x,6.4*\y) node{\sThirteen{1}{\rs}};
	\draw(1.75*\x,6*\y) node{\sFourteen{1}{\rs}};
	\draw(2.75*\x,6.05*\y) node{\sFifteen{1}{\rs}};
	\draw(4.35*\x,5.95*\y) node{\sSixteen{1}{\rs}};
	\draw(5.25*\x,6*\y) node{\sSeventeen{1}{\rs}};
	\draw(6*\x,6.4*\y) node{\sEighteen{1}{\rs}};
	\draw(1.75*\x,7.2*\y) node{\sNineteen{1}{\rs}};
	\draw(2.8*\x,7.2*\y) node{\sTwenty{1}{\rs}};
	\draw(4.2*\x,7.2*\y) node{\sTwentyone{1}{\rs}};
	\draw(5.25*\x,7.2*\y) node{\sTwentytwo{1}{\rs}};
	\draw(3.75*\x,8.2*\y) node{\sTwentythree{1}{\rs}};
	\draw(3.75*\x,8.75*\y) node{\sTwentyfour{1}{\rs}};
	\begin{pgfonlayer}{background}
		\draw(3.5*\x,1*\y) node[circle,fill=black!10!white,scale=2*\r]{};
		\draw(2.5*\x,2*\y) node[circle,fill=black!10!white,scale=2*\r]{};
		\draw(3.5*\x,2*\y) node[circle,fill=black!10!white,scale=2*\r]{};
		\draw(4.5*\x,2*\y) node[circle,fill=black!10!white,scale=2*\r]{};
		\draw(2.5*\x,3*\y) node[circle,fill=black!10!white,scale=2*\r]{};
		\draw(3.5*\x,3*\y) node[circle,fill=black!10!white,scale=2*\r]{};
		\draw(4.5*\x,3*\y) node[circle,fill=black!10!white,scale=2*\r]{};
		\draw(3.5*\x,4*\y) node[circle,fill=black!10!white,scale=2*\r]{};
 		\draw(2*\x,5*\y) node[circle,fill=black!10!white,scale=2*\r](q9){};
		  \draw[line width=15pt,draw=black!10!white](3.5*\x,4*\y) -- (2*\x,5*\y);
		\draw(3*\x,5*\y) node[circle,fill=black!10!white,scale=2*\r]{};
		\draw(1*\x,6*\y) node[circle,fill=black!10!white,scale=2*\r]{};
		  \draw[line width=15pt,draw=black!10!white](3*\x,5*\y) -- (1*\x,6*\y);
		\draw(4*\x,5*\y) node[circle,fill=black!10!white,scale=2*\r]{};
		\draw(2*\x,6*\y) node[circle,fill=black!10!white,scale=2*\r]{};
		  \draw[line width=15pt,draw=black!10!white](4*\x,5*\y) -- (2*\x,6*\y);
		\draw(5*\x,5*\y) node[circle,fill=black!10!white,scale=2*\r]{};
		\draw(3*\x,6*\y) node[circle,fill=black!10!white,scale=2*\r]{};
		  \draw[line width=15pt,draw=black!10!white](5*\x,5*\y) -- (3*\x,6*\y);
		\draw(4*\x,6*\y) node[circle,fill=black!10!white,scale=2*\r]{};
		\draw(2*\x,7*\y) node[circle,fill=black!10!white,scale=2*\r]{};
		  \draw[line width=15pt,draw=black!10!white](4*\x,6*\y) -- (2*\x,7*\y);
		\draw(5*\x,6*\y) node[circle,fill=black!10!white,scale=2*\r]{};
		\draw(3*\x,7*\y) node[circle,fill=black!10!white,scale=2*\r]{};
		  \draw[line width=15pt,draw=black!10!white](5*\x,6*\y) -- (3*\x,7*\y);
		\draw(6*\x,6*\y) node[circle,fill=black!10!white,scale=2*\r]{};
		\draw(4*\x,7*\y) node[circle,fill=black!10!white,scale=2*\r]{};
		  \draw[line width=15pt,draw=black!10!white](6*\x,6*\y) -- (4*\x,7*\y);
		\draw(5*\x,7*\y) node[circle,fill=black!10!white,scale=2*\r]{};
		\draw(3.5*\x,8*\y) node[circle,fill=black!10!white,scale=2*\r]{};
		\draw(3.5*\x,8.75*\y) node[circle,fill=black!10!white,scale=2*\r]{};
		  \draw[line width=15pt,draw=black!10!white](5*\x,7*\y) -- (3.5*\x,8*\y) -- (3.5*\x,8.75*\y);
	\end{pgfonlayer}
\end{tikzpicture}
	\caption{The lattice of proper mergings of a $3$-star and a $1$-chain, where the nodes are labeled
	  with the corresponding proper merging. The $1$-chain is represented by the black node, and the 
	  $3$-star by the (labeled) white nodes. The highlighted edges and vertices indicate the congruence
	  classes with respect to the lattice homomorphism $\eta$ defined in \eqref{eq:eta}.}
	\label{fig:mergings_chain_star}
\end{figure}

\section{Enumerating Proper Mergings of Stars and Chains}
  \label{sec:enumeration}
In order to enumerate the proper mergings of an $m$-star and an $n$-chain, we investigate a decomposition of
the set of proper mergings of an $m$-antichain and an $n$-chain, and determine for every 
$(R,T)\in\mathfrak{A\!C}_{m,n}^{\bullet}$ the number of elements in the fiber $\eta^{-1}(R,T)$.

\subsection{Decomposing the Set $\mathfrak{A\!C}_{m,n}^{\bullet}$}
  \label{sec:decomposition}
Denote by $\mathfrak{A\!C}_{m,n}^{\bullet}(k_{1},k_{2})$ the set of proper mergings 
$(R,T)\in\mathfrak{A\!C}_{m,n}^{\bullet}$ satisfying the following condition: $k_{1}$ is the minimal index 
such that there exists some $j_{1}\in\{1,2,\ldots,m\}$ with $a_{j_{1}}\;R\;c_{k_{1}}$, and $k_{2}$ is the 
maximal index such that there exists some $j_{2}\in\{1,2,\ldots,m\}$ with $c_{k_{2}}\;T\;a_{j_{2}}$. By 
convention, if $R=\emptyset$, then we set $k_{1}:=n+1$, and if $T=\emptyset$, then we set $k_{2}:=0$. Let 
$\biguplus$ denote the disjoint set union.

\begin{lemma}
  \label{lem:decomposition}
	If $(R,T)\in\mathfrak{A\!C}_{m,n}^{\bullet}(k_{1},k_{2})$ is a proper merging of $\ac$ and $\cc$,
	then $k_{1}>k_{2}$. Moreover we have
	\begin{displaymath}
		\mathfrak{A\!C}_{m,n}^{\bullet}=
		  \biguplus_{k_{1}=1}^{n+1}\biguplus_{k_{2}=0}^{k_{1}-1}
		  {\mathfrak{A\!C}_{m,n}^{\bullet}(k_{1},k_{2})}.
	\end{displaymath}
\end{lemma}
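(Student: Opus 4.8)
The plan is to reduce the entire statement to the single inequality $k_{1}>k_{2}$, after which the decomposition is pure bookkeeping. Every proper merging $(R,T)\in\mathfrak{A\!C}_{m,n}^{\bullet}$ determines its pair $(k_{1},k_{2})$ uniquely: $k_{1}$ is read off $R$ (the smallest column carrying a cross, or $n+1$ by convention) and $k_{2}$ off $T$ (the largest row carrying a cross, or $0$ by convention). Hence the sets $\mathfrak{A\!C}_{m,n}^{\bullet}(k_{1},k_{2})$ are automatically pairwise disjoint and their union is all of $\mathfrak{A\!C}_{m,n}^{\bullet}$. Once $k_{1}>k_{2}$ is known, $k_{2}$ ranges precisely over $\{0,1,\ldots,k_{1}-1\}$ while $k_{1}$ ranges over $\{1,\ldots,n+1\}$, which is exactly the index set of the stated double disjoint union. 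So I would record this reduction first and then spend the real work on $k_{1}>k_{2}$.

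For $k_{1}>k_{2}$ I would first dispose of the boundary cases using the conventions: if $R=\emptyset$ then $k_{1}=n+1>n\geq k_{2}$, and if $T=\emptyset$ then $k_{2}=0<1\leq k_{1}$; in either case the inequality is immediate. Thus I may assume both $R$ and $T$ are nonempty, so that $k_{1},k_{2}\in\{1,\ldots,n\}$, and argue by contradiction, supposing $k_{1}\leq k_{2}$.

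The contradiction comes from combining three properties of a proper merging, all available from Proposition~\ref{prop:classification_mergings}: the rows of $R$ are intents of $(C,C,\not\geq_{\cc})$, so each row is an up-set, i.e.\ $a_{i}\;R\;c_{j}$ forces $a_{i}\;R\;c_{l}$ for all $l\geq j$ (exactly as used in the proof of Lemma~\ref{lem:injection}); $R\circ T$ is contained in $=_{\ac}$; and $R\cap T^{-1}=\emptyset$. Fix witnesses $j_{1}$ with $a_{j_{1}}\;R\;c_{k_{1}}$ and $j_{2}$ with $c_{k_{2}}\;T\;a_{j_{2}}$. Since $k_{2}\geq k_{1}$, the up-set property gives $a_{j_{1}}\;R\;c_{k_{2}}$; together with $c_{k_{2}}\;T\;a_{j_{2}}$ this yields $(a_{j_{1}},a_{j_{2}})\in R\circ T\subseteq{=_{\ac}}$, forcing $j_{1}=j_{2}=:j$. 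But then $a_{j}\;R\;c_{k_{2}}$ and $c_{k_{2}}\;T\;a_{j}$ hold simultaneously, i.e.\ $(a_{j},c_{k_{2}})\in R\cap T^{-1}$, contradicting properness. This establishes $k_{1}>k_{2}$, and with the reduction above the displayed decomposition follows.

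I expect the one genuinely substantive step to be the middle one: recognizing that the up-set shape of the rows of $R$ (forced by the bond condition against the chain's contraordinal scale) is precisely what lets a cross in column $k_{1}$ ``reach'' column $k_{2}$, so that the antichain constraint $R\circ T\subseteq{=_{\ac}}$ can collapse $j_{1}$ and $j_{2}$ into a single index, at which point properness delivers the contradiction. The remaining parts --- the boundary conventions and the bookkeeping that turns $k_{1}>k_{2}$ into the stated index ranges --- are routine.
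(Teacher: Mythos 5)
Your proof is correct and takes essentially the same route as the paper: assuming $k_{1}\leq k_{2}$, you combine the bond property of $R$ (rows are up-sets of the chain) with $R\circ T\subseteq{=_{\ac}}$ and $R\cap T^{-1}=\emptyset$ to reach a contradiction, and the partition then follows from the uniqueness of the pair $(k_{1},k_{2})$, exactly as in the paper. If anything your version is slightly tidier: the paper splits into the cases $k_{1}=k_{2}$ and $k_{1}<k_{2}$ and argues tersely through the induced order $\leq_{R,T}$, whereas your explicit up-set step treats both cases uniformly, and you also dispose of the conventions $R=\emptyset$, $T=\emptyset$ explicitly, which the paper leaves implicit.
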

\begin{proof}
	Let $(R,T)\in\mathfrak{A\!C}_{m,n}^{\bullet}$. Denote by $\leq_{R,T}$ the order relation induced by 
	the proper merging $(R,T)$ on the set $A\cup C$. Assume that $k_{1}\leq k_{2}$. This means that 
	there exist elements $a_{j_{1}},a_{j_{2}}\in A$ with $a_{j_{1}}\leq_{R,T}c_{k_{1}}$ and 
	$c_{k_{2}}\leq_{R,T}a_{j_{2}}$. If $k_{1}=k_{2}$, then $c_{k_{1}}=c_{k_{2}}$, and this implies that 
	$a_{j_{1}}=a_{j_{2}}$ (since $\ac$ is an antichain) which is a contradiction to $(R,T)$ being a 
	proper merging. If $k_{1}<k_{2}$, we have $c_{k_{1}}<c_{k_{2}}$, and thus 
	$a_{j_{1}}\leq_{R,T}c_{k_{1}}<c_{k_{2}}\leq_{R,T}a_{j_{2}}$. This is a contradiction to 
	$R\circ T$ being contained in $=_{\ac}$.
	
	It is clear that the values $k_{1}$ and $k_{2}$ are uniquely determined, and thus the result follows.
\end{proof}

For later use, we will decompose $\mathfrak{A\!C}_{m,n}^{\bullet}(k_{1},k_{2})$ even further. Let 
$(R,T)\in\mathfrak{A\!C}_{m,n}^{\bullet}(k_{1},k_{2})$. It is clear that there exists a maximal index 
$l\in\{0,1,\ldots,k_{2}\}$ such that $c_{l}\;R\;a$ for all $a\in A$. (The case $l=0$ is to be interpreted
as the case where there exists no $c_{l}$ with the desired property.) Denote by 
$\mathfrak{A\!C}_{m,n}^{\bullet}(k_{1},k_{2},l)$ the set of proper mergings 
$(R,T)\in\mathfrak{A\!C}_{m,n}^{\bullet}(k_{1},k_{2})$ with $l$ being the maximal index such that 
$c_{l}\;R\;a_{j}$ for all $j\in\{1,2,\ldots,m\}$. Similarly to Lemma~\ref{lem:decomposition}, we can show 
that
\begin{displaymath}
	\mathfrak{A\!C}_{m,n}^{\bullet}(k_{1},k_{2})=
	  \biguplus_{l=0}^{k_{2}}{\mathfrak{A\!C}_{m,n}^{\bullet}(k_{1},k_{2},l)},
\end{displaymath}
and we obtain
\begin{equation}
  \label{eq:cardinality_decomposition_1}
	\Bigl\lvert\mathfrak{A\!C}_{m,n}^{\bullet}\Bigr\rvert=\sum_{k_{1}=1}^{n+1}\sum_{k_{2}=0}^{k_{1}-1}
	  \sum_{l=0}^{k_{2}}{\Bigl\lvert\mathfrak{A\!C}_{m,n}^{\bullet}(k_{1},k_{2},l)\Bigr\rvert}.
\end{equation}

\subsection{Determining the Cardinality of $\mathfrak{A\!C}^{\bullet}_{m,n}(k_{1},k_{2},l)$}
  \label{sec:cardinality_parts}
In \cite{muehle12counting}*{Section~5}, the author has investigated the number of proper mergings of 
an $m$-antichain and an $n$-chain, and has constructed a bijection from these proper mergings to monotone
$(n+1)$-colorings of the complete bipartite graph $\vec{K}_{m,m}$. This bijection is essential for 
determining the cardinality of $\mathfrak{A\!C}^{\bullet}_{m,n}(k_{1},k_{2},l)$. Let us recall this 
construction briefly. Let $V$ be the vertex set of a complete bipartite digraph $\vec{K}_{m,m}$ partitioned 
into sets $V_{1}$ and $V_{2}$ such that $\lvert V_{1}\rvert=\lvert V_{2}\rvert=m$, and such that the set 
$\vec{E}$ of edges of $\vec{K}_{m,m}$ satisfies $\vec{E}=V_{1}\times V_{2}$. A 
monotone $n$-coloring of $\vec{K}_{m,m}$ is now a map $\gamma:V\to\{1,2,\ldots,n\}$ satisfying 
$\gamma(v)\leq\gamma(v')$ for all $(v,v')\in\vec{E}$.

Let $\ac$ and $\cc$ denote 
an $m$-antichain and an $n$-chain, respectively, as defined in Sections~\ref{sec:m_stars} and 
\ref{sec:n_chains}. For $(R,T)\in\mathfrak{A\!C}_{m,n}^{\bullet}$, we construct a coloring $\gamma_{(R,T)}$ 
of $\vec{K}_{m,m}$ as follows
\begin{equation}
  \label{eq:coloring}
	\gamma_{(R,T)}(v_{i})=n+1-k\quad\mbox{if and only if}\quad
	  \begin{cases}
		v_{i}\in V_{1} & \mbox{and}\;a_{i}\;R\;c_{j}\;\mbox{for all}\\ 
		  & j\in\{k+1,k+2,\ldots,n\},\\
		v_{i}\in V_{2} & \mbox{and}\;c_{j}\;T\;a_{i}\;\mbox{for all}\\
		  & j\in\{1,2,\ldots,k\},
	  \end{cases}
\end{equation}
where $i\in\{1,2,\ldots,m\}$. It is the statement of \cite{muehle12counting}*{Theorem~5.6} that this defines a 
bijection between $\mathfrak{A\!C}_{m,n}^{\bullet}$ and the set of monotone $(n+1)$-colorings of $\vec{K}_{m,m}$. 
The next lemma describes how the monotone $(n+1)$-coloring of $\vec{K}_{m,m}$ induced by $(R,T)$ is influenced by 
the parameters $k_{1},k_{2}$ and $l$.

\begin{lemma}
  \label{lem:restricted_coloring}
	Let $(R,T)\in\mathfrak{A\!C}_{m,n}^{\bullet}(k_{1},k_{2},l)$. The monotone $(n+1)$-coloring
	$\gamma_{(R,T)}$ of $\vec{K}_{m,m}$ as defined in \eqref{eq:coloring} satisfies 
	\begin{align*}
		1\leq\gamma_{(R,T)}(v) & \leq n+2-k_{1}\quad\mbox{if}\;v\in V_{1},\qquad\mbox{and}\\
		n+1-l\geq\gamma_{(R,T)}(v) & \geq n+1-k_{2}\quad\mbox{if}\;v\in V_{2},
	\end{align*}
	and there is at least one vertex $v^{(1)}\in V_{1}$ with 
	$\gamma_{(R,T)}\bigl(v^{(1)}\bigr)=n+2-k_{2}$, and there is at least one vertex $v^{(2)}\in V_{2}$ 
	with $\gamma_{(R,T)}\bigl(v^{(2)}\bigr)=n+1-k_{2}$, and at least one vertex 
	$v'^{(2)}\in V_{2}$ with $\gamma_{(R,T)}\bigl(v'^{(2)}\bigr)=n+1-l$.
\end{lemma}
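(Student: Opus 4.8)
The plan is to unwind the definition \eqref{eq:coloring} of $\gamma_{(R,T)}$ and recast it in terms of two elementary integer statistics attached to each antichain element; once this is done, all the inequalities and witness vertices fall out by inspection, and the bijection \cite{muehle12counting}*{Theorem~5.6} itself is not needed, only the explicit formula \eqref{eq:coloring}. The first step is to record the monotonicity coming from the bond structure. Since $R$ is a bond from $(A,A,\neq_{\ac})$ to $(C,C,\not\geq_{\cc})$, the set $\{j : a_i\,R\,c_j\}$ is an up-set of the chain, hence equals $\{r_i,r_i+1,\ldots,n\}$ for a unique $r_i\in\{1,2,\ldots,n+1\}$, where $r_i=n+1$ means $a_i$ is $R$-related to nothing. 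Dually, because $T$ is a bond in the opposite direction, $\{j : c_j\,T\,a_i\}$ is a down-set $\{1,2,\ldots,t_i\}$ for a unique $t_i\in\{0,1,\ldots,n\}$, with $t_i=0$ when the column is empty. Reading \eqref{eq:coloring} with $k$ taken minimally for $v_i\in V_1$ and maximally for $v_i\in V_2$ then yields the clean formulas
\[
	\gamma_{(R,T)}(v_i)=n+2-r_i\quad(v_i\in V_1),\qquad
	\gamma_{(R,T)}(v_i)=n+1-t_i\quad(v_i\in V_2).
\]

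The second step is to re-express the three parameters of Section~\ref{sec:decomposition} through these statistics. By definition $k_1=\min_i r_i$ and $k_2=\max_i t_i$, while $l$, being the maximal index with $c_l\,T\,a_j$ for every $j$, equals $\min_i t_i$ (since $c_l\,T\,a_j$ for all $j$ is equivalent to $l\le t_j$ for all $j$). The range bounds are now immediate: for $v_i\in V_1$, the inequalities $1\le r_i$ and $r_i\ge k_1$ give $1\le\gamma_{(R,T)}(v_i)\le n+2-k_1$; for $v_i\in V_2$, the inequalities $l\le t_i\le k_2$ give $n+1-k_2\le\gamma_{(R,T)}(v_i)\le n+1-l$. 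The three witness vertices are exactly the extremizers: an index attaining $\min_i r_i=k_1$ produces a vertex $v^{(1)}\in V_1$ realizing the maximal $V_1$-colour $n+2-k_1$; an index attaining $\max_i t_i=k_2$ produces $v^{(2)}\in V_2$ of colour $n+1-k_2$; and an index attaining $\min_i t_i=l$ produces $v'^{(2)}\in V_2$ of colour $n+1-l$. Each exists simply because a minimum or maximum over the finite index set $\{1,2,\ldots,m\}$ is attained.

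The only genuine care required here is bookkeeping rather than mathematics, and that is where I expect the main (minor) obstacle to lie. Two points must be handled precisely. First, the boundary conventions: $r_i=n+1$ for an empty row and $t_i=0$ for an empty column, which must be checked to match the conventions $k_1:=n+1$ (when $R=\emptyset$) and $k_2:=0$ (when $T=\emptyset$) of Section~\ref{sec:decomposition}, as well as the degenerate value $l=0$. Second, the parameter $k$ in \eqref{eq:coloring} is not uniquely determined by the displayed membership condition alone—that condition holds for a whole interval of $k$—so one must justify selecting the minimal $k$ for $V_1$ and the maximal $k$ for $V_2$, which is precisely what makes $\gamma_{(R,T)}$ well defined and what produces the formulas above. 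Once those formulas are in place nothing further is needed; in particular, the compatibility $k_1>k_2$ of Lemma~\ref{lem:decomposition} reads $n+2-k_1\le n+1-k_2$, i.e.\ the $V_1$-colours lie weakly below the $V_2$-colours, re-confirming the monotonicity of $\gamma_{(R,T)}$ along the edges of $\vec{K}_{m,m}$.
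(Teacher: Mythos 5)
Your proposal is correct, and it reaches the lemma by a genuinely more explicit route than the paper. The paper argues by contradiction directly from \eqref{eq:coloring}: supposing some $v_{t}\in V_{1}$ has color $k>n+2-k_{1}$ forces $a_{t}\;R\;c_{n+2-k}$ with $n+2-k<k_{1}$, contradicting the minimality of $k_{1}$; supposing no vertex attains $n+2-k_{1}$ contradicts the existence part of the definition of $k_{1}$; and the $V_{2}$ bounds are dismissed with ``the argument works similar,'' with only a remark that both $k_{2}$ and $l$ must be considered. You instead make the coloring fully explicit: the bond structure forces each row of $R$ to be an up-set $\{r_{i},\ldots,n\}$ of the chain and each column of $T$ a down-set $\{1,\ldots,t_{i}\}$, whence the closed formulas $\gamma_{(R,T)}(v_{i})=n+2-r_{i}$ on $V_{1}$ and $\gamma_{(R,T)}(v_{i})=n+1-t_{i}$ on $V_{2}$, together with the identifications $k_{1}=\min_{i}r_{i}$, $k_{2}=\max_{i}t_{i}$, $l=\min_{i}t_{i}$; all bounds and all three witnesses are then attained extrema over a finite index set. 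What your version buys is a uniform treatment of $V_{1}$ and $V_{2}$ (no analogy left to the reader), an explicit verification of the boundary conventions $k_{1}=n+1$, $k_{2}=0$, $l=0$, and a resolution of a real looseness in \eqref{eq:coloring}: as displayed, the condition there holds for an interval of values of $k$, and your extremal reading (minimal $k$ on $V_{1}$, maximal $k$ on $V_{2}$) is exactly the one under which $\gamma_{(R,T)}$ is well defined and \cite{muehle12counting}*{Theorem~5.6} is a bijection. One further point in your favor: the witness claim for $V_{1}$ as printed in the lemma, $\gamma_{(R,T)}\bigl(v^{(1)}\bigr)=n+2-k_{2}$, is a typo --- it is incompatible with the upper bound $\gamma_{(R,T)}(v)\leq n+2-k_{1}$ combined with $k_{1}>k_{2}$ from Lemma~\ref{lem:decomposition}, and the subsequent count in Lemma~\ref{lem:count_v1} uses the value $n+2-k_{1}$ --- and your proof establishes the intended statement with $k_{1}$.
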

\begin{proof}
	Assume that there exists some $t\in\{1,2,\ldots,m\}$ such that the vertex 
	$v_{t}\in V_{1}$ satisfies $\gamma_{(R,T)}(v_{t})=k>n+2-k_{1}$. In view of \eqref{eq:coloring}, this
	means that $a_{t}\;R\;c_{j}$ for all $j\in\{n+2-k,n+3-k,\ldots,n\}$, in particular 
	$a_{t}\;R\;c_{n+2-k}$. We have $n+2-k<n+2-(n+2-k_{1})=k_{1}$, and thus 
	$c_{n+2-k}<_{\cc}c_{k_{1}}$ which contradicts the minimality of $k_{1}$. If all $v\in V_{1}$ 
	have $\gamma_{(R,T)}(v)\leq n+2-k_{1}$, then we obtain a contradiction to the minimality of $k_{1}$ 
	in an analogous way. The argument for the vertices in $V_{2}$ works similar. Note that we have
	to consider both bounds $k_{2}$ and $l$.
\end{proof}

The next two lemmas determine the cardinality of $\mathfrak{A\!C}_{m,n}^{\bullet}(k_{1},k_{2},l)$ for every
valid triple $(k_{1},k_{2},l)$ by enumerating the corresponding monotone colorings of $\vec{K}_{m,m}$. Note
that the number of possible ways to color the vertex set $V_{1}$ depends on the parameters $m,n$ and $k_{1}$, 
while the number of possible ways to color the vertex set $V_{2}$ depend on the parameters $m,k_{2}$ and $l$. 
For a fixed choice of indices $k_{1},k_{2}$ and $l$, denote by $F_{V_{1}}(m,n,k_{1})$ the number of possible 
colorings of $V_{1}$, and denote by $F_{V_{2}}(m,k_{2},l)$ the number of possible colorings of $V_{2}$. 

\begin{lemma}
  \label{lem:count_v1}
	For $k_{1}\in\{1,2,\ldots,n+1\}$, we have 
	\begin{displaymath}
		F_{V_{1}}(m,n,k_{1})=(n+2-k_{1})^{m}-(n+1-k_{1})^{m}.
	\end{displaymath}
\end{lemma}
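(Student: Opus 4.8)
The plan is to exploit the bijection between $\mathfrak{A\!C}_{m,n}^{\bullet}$ and the monotone $(n+1)$-colorings of $\vec{K}_{m,m}$ from \cite{muehle12counting}*{Theorem~5.6}, together with the bounds recorded in Lemma~\ref{lem:restricted_coloring}, to reduce the claim to a routine counting problem for colorings of the single part $V_{1}$. Since $\vec{K}_{m,m}$ is bipartite with all edges directed from $V_{1}$ to $V_{2}$, there are no edges internal to $V_{1}$; hence the monotonicity condition imposes \emph{no} relation among the colors of the $m$ vertices of $V_{1}$, and each of these colors may be chosen independently. The only restrictions come from the parameter $k_{1}$: by Lemma~\ref{lem:restricted_coloring}, every $v\in V_{1}$ satisfies $1\leq\gamma_{(R,T)}(v)\leq n+2-k_{1}$, and the top value $n+2-k_{1}$ is attained by at least one vertex of $V_{1}$.

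Next I would argue that these two conditions exactly characterize the admissible colorings of $V_{1}$. Any coloring coming from $(R,T)\in\mathfrak{A\!C}_{m,n}^{\bullet}(k_{1},k_{2},l)$ satisfies them by Lemma~\ref{lem:restricted_coloring}. Conversely, a function $V_{1}\to\{1,2,\ldots,n+2-k_{1}\}$ attaining its top value corresponds to a relation $R$ whose minimal non-empty column is exactly $k_{1}$: under \eqref{eq:coloring} the value $n+2-k_{1}$ at a vertex $v_{i}$ records precisely that $a_{i}\;R\;c_{j}$ holds for all $j\geq k_{1}$ (and not for $j=k_{1}-1$), so the maximal color being attained is equivalent to $k_{1}$ being the least index with $a_{j_{1}}\;R\;c_{k_{1}}$ for some $j_{1}$. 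The fact that any such $V_{1}$-coloring extends to a full monotone coloring is automatic, because $k_{1}>k_{2}$ by Lemma~\ref{lem:decomposition} forces every $V_{1}$-color to be at most every $V_{2}$-color; this decoupling is exactly what allows $V_{1}$ to be counted on its own.

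Finally, the count is complementary counting: $F_{V_{1}}(m,n,k_{1})$ equals the number of functions from the $m$-element set $V_{1}$ to the $(n+2-k_{1})$-element set $\{1,2,\ldots,n+2-k_{1}\}$ whose image contains the top value $n+2-k_{1}$. Subtracting the functions that avoid this value, namely those mapping into $\{1,2,\ldots,n+1-k_{1}\}$, from all functions yields
\begin{displaymath}
	F_{V_{1}}(m,n,k_{1})=(n+2-k_{1})^{m}-(n+1-k_{1})^{m},
\end{displaymath}
as claimed. The step deserving care is the converse correspondence in the second paragraph: one must confirm that requiring the maximal color to be attained is genuinely equivalent to ``$k_{1}$ is the minimal index carrying a cross in $R$'', and one should check the degenerate endpoint $k_{1}=n+1$ (where $R=\emptyset$), in which case the formula correctly returns $1^{m}-0^{m}=1$.
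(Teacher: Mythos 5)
Your proposal is correct and follows essentially the same route as the paper: the paper's proof is exactly your final complementary-counting step, counting colorings of $V_{1}$ with colors in $\{1,2,\ldots,n+2-k_{1}\}$ and subtracting the $(n+1-k_{1})^{m}$ colorings that avoid the top value, with the admissibility of these restrictions taken from Lemma~\ref{lem:restricted_coloring}. Your additional verification of the converse correspondence (that attaining the top color is equivalent to $k_{1}$ being the minimal index with a cross in $R$) and the endpoint check $k_{1}=n+1$ are details the paper leaves implicit, but they do not change the argument.
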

\begin{proof}
	Let $V=V_{1}\cup V_{2}$ be the vertex set of $\vec{K}_{m,m}$ where $V_{1},V_{2}$ are maximal disjoint
	independent sets of $\vec{K}_{m,m}$. Recall that we want to count the possible colorings of 
	$\vec{K}_{m,m}$ such that the vertices in $V_{1}$ have color at most $n+2-k_{1}$ and there is at 
	at least one vertex in $V_{1}$ having color exactly $n+2-k_{1}$.
	
	A standard counting argument shows that there are precisely $(n+2-k_{1})^{m}$ ways to color the $m$ 
	vertices of $V_{1}$ with colors in $\{1,2,\ldots,n+2-k_{1}\}$. Since we require that at least one 
	vertex has color $n+2-k_{1}$, we have to exclude the cases where every vertex is colored 
	$\leq n+1-k_{1}$. The same counting argument shows that there are $(n+1-k_{1})^{m}$-many such 
	colorings. Hence the number of ways to color the vertices of $V_{1}$ with the given restrictions is
	precisely $(n+2-k_{1})^{m}-(n+1-k_{1})^{m}$ as desired.
\end{proof}

\begin{lemma}
  \label{lem:count_v2}
	Let $k_{1}\in\{1,2,\ldots,n+1\}$. For $k_{2}\in\{0,1,\ldots,k_{1}-1\}$ and 
	$l\in\{0,1,\ldots,k_{2}\}$, we have
	\begin{displaymath}
		F_{V_{2}}(m,k_{2},l)=
			\begin{cases}
				1, & k_{2}=l,\;\mbox{or}\\
				(k_{2}-l+1)^{m}-2(k_{2}-l)^{m}+(k_{2}-l-1)^{m}, & \mbox{otherwise}.
			\end{cases}
	\end{displaymath}
\end{lemma}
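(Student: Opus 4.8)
The plan is to strip Lemma~\ref{lem:restricted_coloring} down to a pure counting problem about the colors assigned to the $m$ vertices of $V_{2}$ and then resolve it by inclusion-exclusion. By Lemma~\ref{lem:restricted_coloring}, any coloring arising from some $(R,T)\in\mathfrak{A\!C}_{m,n}^{\bullet}(k_{1},k_{2},l)$ assigns to each vertex of $V_{2}$ a color in the interval $\{n+1-k_{2},n+2-k_{2},\ldots,n+1-l\}$, and both extreme values $n+1-k_{2}$ and $n+1-l$ are attained. Since $V_{2}$ is an independent set in $\vec{K}_{m,m}$, there is no monotonicity constraint \emph{among} the vertices of $V_{2}$; the only edges point from $V_{1}$ into $V_{2}$, and these are already respected because the $V_{2}$-colors are bounded below by $n+1-k_{2}$ while the $V_{1}$-colors are bounded above by $n+2-k_{1}$, which are compatible as $k_{2}\leq k_{1}-1$. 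Thus $F_{V_{2}}(m,k_{2},l)$ is exactly the number of maps from the $m$ vertices of $V_{2}$ into a set of $d:=k_{2}-l+1$ available colors that use both the smallest and the largest available color.

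First I would dispose of the degenerate case $k_{2}=l$, where $d=1$: the interval is the single color $n+1-k_{2}=n+1-l$, every vertex is forced onto it, and the extreme-attainment condition is automatic, giving exactly one coloring and matching the first branch. For the generic case $k_{2}>l$ (so $d\geq 2$) I would count by inclusion-exclusion over the two forbidden events $E_{\min}$ (the smallest color is not used) and $E_{\max}$ (the largest color is not used). There are $d^{m}$ colorings in total; the colorings in $E_{\min}$ use only the top $d-1$ colors, and likewise for $E_{\max}$, so $\lvert E_{\min}\rvert=\lvert E_{\max}\rvert=(d-1)^{m}$; the colorings in $E_{\min}\cap E_{\max}$ avoid both extremes, leaving the $d-2$ middle colors, so $(d-2)^{m}$ of them. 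Hence the number of admissible colorings is
\begin{displaymath}
  d^{m}-2(d-1)^{m}+(d-2)^{m}=(k_{2}-l+1)^{m}-2(k_{2}-l)^{m}+(k_{2}-l-1)^{m},
\end{displaymath}
as claimed.

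I do not expect a genuine obstacle: once Lemma~\ref{lem:restricted_coloring} is available the statement is a textbook inclusion-exclusion. The one point meriting care is to confirm that the constraints in Lemma~\ref{lem:restricted_coloring} are not merely necessary but also \emph{sufficient}, so that every coloring of $V_{2}$ in the prescribed range attaining both extremes really does extend (independently of the choice on $V_{1}$) to a proper merging lying in $\mathfrak{A\!C}_{m,n}^{\bullet}(k_{1},k_{2},l)$, with no coloring counted twice. This amounts to checking that the attainment of $n+1-k_{2}$ and $n+1-l$ corresponds precisely to the maximality defining $k_{2}$ and $l$, and that the $V_{1}$- and $V_{2}$-colors decouple; with that in place the two factors $F_{V_{1}}(m,n,k_{1})$ and $F_{V_{2}}(m,k_{2},l)$ multiply, which is exactly what the subsequent enumeration needs.
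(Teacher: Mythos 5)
Your proof is correct and follows essentially the same route as the paper: the case $k_{2}=l$ is handled separately, and for $l<k_{2}$ the count $(k_{2}-l+1)^{m}-2(k_{2}-l)^{m}+(k_{2}-l-1)^{m}$ is obtained by the identical inclusion-exclusion over colorings missing the extreme colors $n+1-k_{2}$ and $n+1-l$. Your closing remark about sufficiency of the constraints from Lemma~\ref{lem:restricted_coloring} is a fair point of care, but the paper treats it at the same level of detail, deferring to the bijection of \cite{muehle12counting}*{Theorem~5.6} and recording the decoupling of the $V_{1}$- and $V_{2}$-colors in \eqref{eq:cardinality_decomposition_2}.
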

\begin{proof}
	Let $V=V_{1}\cup V_{2}$ be the vertex set of $\vec{K}_{m,m}$ where $V_{1},V_{2}$ are maximal disjoint
	independent sets of $\vec{K}_{m,m}$. Recall that we want to count the possible colorings of 
	$\vec{K}_{m,m}$ such that the vertices in $V_{2}$ have colors in 
	$\{n+1-k_{2},n+2-k_{2},\ldots,n+1-l\}$ with at least one vertex having color exactly $n+1-k_{2}$, 
	and at least one vertex having color exactly $n+1-l$. 
	
	If $k_{2}=l$, it follows from Lemma~\ref{lem:restricted_coloring} that every vertex in $V_{2}$
	has color $n+1-k_{2}=n+1-l$. There is obviously only one possibility.
	
	So let $l<k_{2}$. With the same standard counting argument as in the proof of the previous lemma, we 
	notice that there are precisely $(k_{2}-l+1)^{m}$ ways to color the $m$ vertices of $V_{2}$ with 
	colors in $\{n+1-k_{2},n+2-k_{2},\ldots,n+1-l\}$. Since we require to color at least one vertex with 
	color $n+1-k_{2}$ and at least one vertex with color $n+1-l$, we have to subtract the cases where 
	all vertices have color $\geq n+2-k_{2}$ and the cases where all vertices have color $\leq n-l$. 
	However, we subtract the cases where all vertices have a color in 
	$\{n+2-k_{2},n+3-k_{2},\ldots,n-l\}$ twice, so we have to add these again. Thus, with an analogous 
	counting argument as before, we obtain
	\begin{displaymath}
		F_{V_{2}}(m,k_{2},l)=(k_{2}-l+1)^{m}-2(k_{2}-l)^{m}+(k_{2}-l-1)^{m},
	\end{displaymath}
	as desired.
\end{proof}

Every proper merging in $\mathfrak{A\!C}_{m,n}^{\bullet}(k_{1},k_{2},l)$ corresponds to a monotone coloring
of $\vec{K}_{m,m}$ where the colors respect the restrictions described in 
Lemma~\ref{lem:restricted_coloring}. Since $k_{1}>k_{2}$ (see Lemma~\ref{lem:decomposition}) we notice that
the largest possible color for $V_{1}$ is strictly smaller than the smallest possible color for $V_{2}$,
and we obtain
\begin{equation}
  \label{eq:cardinality_decomposition_2}
	\Bigl\lvert\mathfrak{A\!C}_{m,n}^{\bullet}(k_{1},k_{2},l)\Bigr\rvert
	  = F_{V_{1}}(m,n,k_{1})\cdot F_{V_{2}}(m,k_{2},l).
\end{equation}

\subsection{Determining the Cardinality of the Fibers}
  \label{sec:cardinality_fibers}
We have seen in Section~\ref{sec:embedding} that $\bigl(\mathfrak{A\!C}_{m,n}^{\bullet},\preceq\bigr)$ is a
quotient lattice of $\bigl(\mathfrak{S\!C}_{m,n}^{\bullet},\preceq\bigr)$. Thus, every proper merging of an 
$m$-antichain and an $n$-chain corresponds to a set of proper mergings of an $m$-star and an $n$-chain
(namely the corresponding fiber under the lattice homomorphism $\eta$), and these sets are pairwise disjoint.
Thus, if we can determine the number of elements in each fiber, then we can determine the number of all 
proper mergings of an $m$-star and an $n$-chain.

Let $(R,T)\in\mathfrak{S\!C}_{m,n}^{\bullet}$ be a proper merging of an $m$-star and an $n$-chain. In the 
following, we write for some $j\in\{1,2,\ldots,n\}$ simply ``$s_{0}\leq_{R,T}c_{j}$'' to mean that we create
a pair of relations $(R',T)$ from $(R,T)$ by setting
\begin{displaymath}
	R'=R\cup\bigl\{(s_{0},c_{j}),(s_{0},c_{j+1}),\ldots,(s_{0},c_{n})\bigr\}.
\end{displaymath}
Similarly, we write ``$c_{j}\leq_{R,T}s_{0}$'' for some $j\in\{1,2,\ldots,n\}$ to mean that we create a 
new pair of relations $(R,T')$ from $(R,T)$ by setting
\begin{displaymath}
	T'=T\cup\bigl\{(c_{1},s_{i}),(c_{2},s_{i}),\ldots,(c_{j},s_{i})\bigr\},
	  \;\mbox{for all}\;i\in\{0,1,\ldots,m\}.
\end{displaymath}
For $c\in C$, the operations ``$s_{0}\leq_{R,T}c$'' respectively ``$c\leq_{R,T}s_{0}$'' can be understood 
as adding a covering relation $(s_{0},c)$ respectively $(c,s_{0})$ to the proper merging $(R,T)$ and 
applying transitive closure. Thus, it is not immediately clear that these operations yield a merging of an 
$m$-star and an $n$-chain at all. The next lemma determines the number of \emph{proper} mergings we can 
generate from the image under the map $\xi$ of a proper merging of an $m$-antichain and an $n$-chain.

\begin{lemma}
  \label{lem:count_valid}
	Let $(R,T)\in\mathfrak{A\!C}_{m,n}^{\bullet}(k_{1},k_{2},l)$. Then 
	$\bigl\lvert\eta^{-1}(R,T)\bigr\rvert=k_{1}(l+1) - \binom{l+1}{2}$.
\end{lemma}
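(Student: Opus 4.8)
The plan is to give an explicit description of the fiber $\eta^{-1}(R,T)$ and then reduce the count to a lattice-point count. Since $\eta$ is the restriction map of \eqref{eq:eta}, a proper merging $(R',T')\in\mathfrak{S\!C}_{m,n}^{\bullet}$ lies in $\eta^{-1}(R,T)$ if and only if $R'$ and $T'$ agree with $R$ and $T$ on every row and column indexed by $A=S\setminus\{s_{0}\}$. Thus a member of the fiber is determined by, and free to choose, only the row $s_{0}^{R'}\subseteq C$ and the column $\{c\in C:c\;T'\;s_{0}\}$; the whole fiber is in bijection with the admissible such pairs, and its least element is $\xi(R,T)$ from \eqref{eq:xi}. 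The task becomes: count the pairs for which Proposition~\ref{prop:classification_mergings} still certifies $(R',T')$ as a proper merging of the $m$-star and the $n$-chain.

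First I would determine the admissible rows $s_{0}^{R'}$ in isolation. The bond condition forces $s_{0}^{R'}$ to be an intent of $(C,C,\not\geq_{\cc})$, hence a final segment $\{c_{i},\ldots,c_{n}\}$, while the requirement that each column of $R'$ be an extent of $(S,S,\not\geq_{\fs})$ forces $s_{0}\in c^{R'}$ whenever $c^{R}\neq\emptyset$. Since $\{c:c^{R}\neq\emptyset\}=\{c_{k_{1}},\ldots,c_{n}\}$ by the definition of $k_{1}$, the segment $s_{0}^{R'}$ must contain $\{c_{k_{1}},\ldots,c_{n}\}$ and may reach further down, giving exactly $k_{1}$ choices; I index them by the least index $r\in\{1,\ldots,k_{1}\}$ with $s_{0}\;R'\;c_{r}$, using the convention $r=n+1$ for the empty row (possible only if $k_{1}=n+1$). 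Dually, the column $s_{0}^{T'}$ must be an extent of $(C,C,\not\geq_{\cc})$, i.e.\ an initial segment $\{c_{1},\ldots,c_{p}\}$; moreover the only intent of $(S,S,\not\geq_{\fs})$ containing $s_{0}$ is all of $S$, so $c_{j}\;T'\;s_{0}$ forces $c_{j}\;T\;a_{i}$ for every $i$, i.e.\ $j\leq l$ by the definition of $l$. This yields $l+1$ choices $p\in\{0,1,\ldots,l\}$.

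The heart of the argument is the interaction between the two choices. I would first verify that the bond conditions above hold for \emph{every} pair $(r,p)$ with $r\leq k_{1}$ and $p\leq l$, so the only remaining requirements are the two order-compatibility inclusions and properness. Properness $R'\cap(T')^{-1}=\emptyset$ is inherited on the $A$-rows from $(R,T)$, and on $s_{0}$ it says exactly $\{c_{r},\ldots,c_{n}\}\cap\{c_{1},\ldots,c_{p}\}=\emptyset$, that is, $p<r$. The step I expect to be the main obstacle is checking that the inclusions (that $R'\circ T'$ is contained in $\leq_{\fs}$ and $T'\circ R'$ in $\leq_{\cc}$) add nothing: pairs $(s_{0},s')$ are automatic because $s_{0}$ is the least element of the star; a pair $(a_{i},s_{0})\in R'\circ T'$ would need $a_{i}^{R}\cap\{c_{1},\ldots,c_{p}\}=\emptyset$, which holds since $p\leq l\leq k_{2}\leq k_{1}-1$ by Lemma~\ref{lem:decomposition} while $a_{i}^{R}\subseteq\{c_{k_{1}},\ldots,c_{n}\}$; and a pair $(c,c')\in T'\circ R'$ running through $s_{0}$ requires only $p\leq r$, which properness already guarantees. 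Hence the admissible pairs are precisely those with $1\leq r\leq k_{1}$, $0\leq p\leq l$ and $p<r$.

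Finally I would count. Of the $k_{1}(l+1)$ pairs, the inadmissible ones are those violating $p<r$, namely the pairs with $1\leq r\leq p\leq l$, of which there are $\binom{l+1}{2}$. Therefore
\begin{displaymath}
	\bigl\lvert\eta^{-1}(R,T)\bigr\rvert=k_{1}(l+1)-\binom{l+1}{2},
\end{displaymath}
as claimed; equivalently one may sum $\sum_{r=1}^{k_{1}}\min(r,l+1)$, using $l+1\leq k_{1}$.
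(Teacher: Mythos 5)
Your argument is correct, and it takes a recognizably different route from the paper's. You classify the fiber \emph{statically}: since the $A$-rows of $R'$ and the $A$-columns of $T'$ are frozen by $\eta(R',T')=(R,T)$, each fiber element is encoded by the pair $(r,p)$ recording the final segment $s_{0}^{R'}=\{c_{r},\ldots,c_{n}\}$ and the initial segment $\{c_{1},\ldots,c_{p}\}$ of chain elements below $s_{0}$; you then check against Proposition~\ref{prop:classification_mergings} that admissibility is exactly $r\leq k_{1}$, $p\leq l$, $p<r$ (using $l\leq k_{2}<k_{1}$ from Lemma~\ref{lem:decomposition}), and obtain the count by subtracting the $\binom{l+1}{2}$ pairs violating $p<r$ from $k_{1}(l+1)$. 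The paper argues \emph{generatively}: it starts from the least fiber element $\xi(R,T)=(R_{o},T_{o})$ and produces the others by the closure operations ``$s_{0}\leq c_{j}$'' and ``$c_{j}\leq s_{0}$'', determining case by case for which $j$ (in the ranges $\{k_{2}+1,\ldots,k_{1}-1\}$, $\{l+1,\ldots,k_{2}\}$, $\{1,\ldots,l\}$) the result stays in the fiber, and sums $1+(k_{1}-k_{2}-1)+(k_{2}-l)+\sum_{j=1}^{l}(k_{1}-j)+l$. The two enumerations traverse the same parameter set, but the trade-offs differ: your version makes exhaustiveness transparent (every fiber element manifestly \emph{is} a pair $(r,p)$, whereas the paper leaves somewhat implicit that its operations reach the whole fiber) and gives the binomial coefficient a direct meaning as the set of forbidden pairs, while the paper's version exhibits the fiber as an interval grown from its minimum $\xi(R,T)$, which meshes with the quotient-lattice picture of Section~\ref{sec:embedding}. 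Two cosmetic remarks: your sentence ruling out $(a_{i},s_{0})\in R'\circ T'$ states the condition in negated form (what must hold is $a_{i}^{R}\cap\{c_{1},\ldots,c_{p}\}=\emptyset$, which your inequalities indeed deliver, since $a_{i}^{R}\subseteq\{c_{k_{1}},\ldots,c_{n}\}$), and you should also record explicitly that pairs of $R'\circ T'$ not involving $s_{0}$ are inherited verbatim from $R\circ T$ --- both are one-line fixes, not gaps.
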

\begin{proof}
	By construction, we have $\xi(R,T)=(R_{o},T_{o})\in\eta^{-1}(R,T)$, and 
	$s_{0}\leq_{R_{o},T_{o}}c_{k}$ for all $k\in\{k_{1},k_{1}+1,\ldots,n\}$. Thus, performing 
	``$s_{0}\leq_{R_{o},T_{o}}c_{j}$'' for some $j\geq k_{1}$ would simply do nothing. 
	Performing ``$c_{j}\leq_{R_{o},T_{o}}s_{0}$'' for some $j\geq k_{1}$ adds in particular the relation 
	$(c_{j},a_{k})$ to $T_{o}$ for all $k\in\{1,2,\ldots,m\}$. Since 
	$(R,T)\in\mathfrak{A\!C}_{m,n}^{\bullet}(k_{1},k_{2},l)$, we can assume that there exists some
	$i\in\{1,2,\ldots,m\}$ such that $a_{i}\;R_{o}\;c_{k_{1}}$, and thus in particular 
	$a_{i}\;R_{o}\;c_{j}$. Thus we have $c_{j}\;T_{o}'\;a_{i}$ and $a_{i}\;R\;c_{j}$, which is a 
	contradiction to $(R,T')$ being a proper merging.	
	Hence, we can only create new proper mergings from $(R_{o},T_{o})$ by applying the operations
	``$s_{0}\leq_{R_{o},T_{o}}c_{j}$'' or ``$c_{j}\leq_{R_{o},T_{o}}s_{0}$'' for some 
	$j\in\{1,2,\ldots,k_{1}-1\}$.
	
	If we perform ``$c_{j}\leq_{R_{o},T_{o}}s_{0}$'' for some $j\in\{k_{2}+1,k_{2}+2,\ldots,k_{1}-1\}$,
	then we obtain a proper merging $(R_{o},T_{o}')$ which contains the relations $(c_{j},a_{i})$ for
	all $i\in\{1,2,\ldots,m\}$. Hence, $\eta(R_{o},T_{o}')\neq(R,T)$, and thus 
	$(R_{o},T_{o}')\notin\eta^{-1}(R,T)$. However, we can perform ``$s_{0}\leq_{R_{o},T_{o}}c_{j}$''
	for every $j\in\{k_{2}+1,k_{2}+2,\ldots,k_{1}-1\}$ without problems. This gives us 
	$(k_{1}-k_{2}-1)$-many new proper mergings in $\eta^{-1}(R,T)$.
	
	With the same reasoning as before, we see that performing ``$c_{j}\leq_{R_{o},T_{o}}s_{0}$'' for 
	some $j\in\{l+1,l+2,\ldots,k_{2}\}$ yields a proper merging $(R_{o},T_{o}')\notin\eta^{-1}(R,T)$,
	but we can apply ``$s_{0}\leq_{R_{o},T_{o}}c_{j}$'' for every such $j$, giving us 
	$(k_{2}-l)$-many new proper mergings in $\eta^{-1}(R,T)$.
	
	Now let $j\in\{1,2,\ldots,l\}$. Performing ``$c_{j}\leq_{R_{o},T_{o}}s_{0}$'' works fine in this 
	case, and we obtain a proper merging $(R_{o},T_{o}')$. Additionally, we can now perform
	``$s_{0}\leq_{R_{o},T_{o}'}c_{i}$'' for every $i\in\{j+1,j+2,\ldots,k_{1}-1\}$ to obtain a new 
	proper merging from $(R_{o},T_{o}')$. Note the new
	subscript ``$R_{o},T_{o}'$'' in the operator! (Suppose that we perform 
	``$s_{0}\leq_{R_{o},T_{o}'}c_{i}$'' for some $i\in\{1,2,\ldots,j\}$. Then we had
	$s_{0}\;R_{o}'\;c_{i}\;T_{o}'\;s_{0}$ which is a contradiction to $(R_{o}',T_{o}')$ being a proper 
	merging. Performing ``$s_{0}\leq_{R_{o},T_{o}'}c_{i}$'' for some $i\in\{k_{1},k_{1}+1,\ldots,n\}$
	would yield $(R_{o}',T_{o}')=(R_{o},T_{o}')$.) Thus, for every $j\in\{1,2,\ldots,l\}$ we obtain
	$(k_{1}-j)$-many new proper mergings in $\eta^{-1}(R,T)$. Finally, we can also perform 
	``$s_{0}\leq_{R_{o},T_{o}}c_{j}$'' to obtain a new proper merging $(R_{o}',T_{o})\in\eta^{-1}(R,T)$. 
	However, we cannot perform ``$c_{i}\leq_{R_{o}',T_{o}}s_{0}$'' for any $i\in\{1,2,\ldots,n\}$, 
	because we would either obtain a contradiction or a proper merging we have already counted. Hence,
	this case gives us $l$ new proper mergings in $\eta^{-1}(R,T)$. 
	
	Now we just have to add all the possibilities and obtain
	\begin{align*}
		\bigl\lvert\eta^{-1}(R,T)\bigr\rvert 
		  & = 1 + (k_{1}-k_{2}-1) + (k_{2}-l) + \Bigl(\sum_{j=1}^{l}{k_{1}-j}\Bigr) + l\\
		& = k_{1}(l+1) - \frac{l(l+1)}{2}\\
		& = k_{1}(l+1) - \binom{l+1}{2},
	\end{align*}
	as desired.
\end{proof}

Now we are set to enumerate the proper mergings of an $m$-star and an $n$-chain. 

\begin{lemma}
  \label{lem:equality}
	For $m,n\in\mathbb{N}$, we have $F_{\fs\!\cc}(m,n)=C(m,n+1)$, where $C$ is defined in 
	\eqref{eq:count}.
\end{lemma}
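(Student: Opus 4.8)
The plan is to exploit Proposition~\ref{prop:quotient_lattice}, which makes $\eta$ a surjective lattice homomorphism whose fibers are pairwise disjoint. Since $F_{\fs\!\cc}(m,n)=\bigl\lvert\mathfrak{S\!C}_{m,n}^{\bullet}\bigr\rvert$ by definition, partitioning $\mathfrak{S\!C}_{m,n}^{\bullet}$ into the fibers of $\eta$ gives
\begin{displaymath}
	F_{\fs\!\cc}(m,n)=\sum_{(R,T)\in\mathfrak{A\!C}_{m,n}^{\bullet}}\bigl\lvert\eta^{-1}(R,T)\bigr\rvert.
\end{displaymath}
First I would regroup this sum along the decomposition of Lemma~\ref{lem:decomposition} into the classes $\mathfrak{A\!C}_{m,n}^{\bullet}(k_{1},k_{2},l)$. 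Lemma~\ref{lem:count_valid} shows that the fiber size $k_{1}(l+1)-\binom{l+1}{2}$ depends only on $k_{1}$ and $l$, and \eqref{eq:cardinality_decomposition_2} gives the class size as $F_{V_{1}}(m,n,k_{1})\cdot F_{V_{2}}(m,k_{2},l)$. Since $F_{V_{1}}$ depends only on $k_{1}$, factoring it out yields
\begin{displaymath}
	F_{\fs\!\cc}(m,n)=\sum_{k_{1}=1}^{n+1}F_{V_{1}}(m,n,k_{1})\,S(k_{1}),\qquad
	S(k_{1})=\sum_{k_{2}=0}^{k_{1}-1}\sum_{l=0}^{k_{2}}F_{V_{2}}(m,k_{2},l)\Bigl(k_{1}(l+1)-\tbinom{l+1}{2}\Bigr).
\end{displaymath}

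The second step evaluates $S(k_{1})$. The key point is that $F_{V_{2}}(m,k_{2},l)$ depends only on the difference $d=k_{2}-l$, so after exchanging the order of summation the inner sum over $d$ telescopes twice: from Lemma~\ref{lem:count_v2} one gets $\sum_{d=0}^{D}F_{V_{2}}=(D+1)^{m}-D^{m}$. With $D=k_{1}-1-l$ this becomes $(k_{1}-l)^{m}-(k_{1}-l-1)^{m}$, and the change of variable $r=k_{1}-l$ turns the fiber weight into
\begin{displaymath}
	h(r)=\tfrac12(k_{1}+r)(k_{1}-r+1)=\tfrac12\bigl(k_{1}(k_{1}+1)-r(r-1)\bigr),
\end{displaymath}
so that $S(k_{1})=\sum_{r=1}^{k_{1}}h(r)\bigl(r^{m}-(r-1)^{m}\bigr)$. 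An Abel summation finishes this: because $h(r)-h(r+1)=r$ and $h(k_{1})=k_{1}$, the boundary and difference terms collapse and $S(k_{1})=\sum_{r=1}^{k_{1}}r^{m+1}$.

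The final step is a second Abel summation over $k_{1}$. Writing $F_{V_{1}}(m,n,k_{1})=(n+2-k_{1})^{m}-(n+1-k_{1})^{m}$ as the telescoping difference $G(k_{1})-G(k_{1}+1)$ with $G(k)=(n+2-k)^{m}$, and using $S(k_{1})-S(k_{1}-1)=k_{1}^{m+1}$ together with $G(n+2)=0$, summation by parts gives
\begin{displaymath}
	F_{\fs\!\cc}(m,n)=\sum_{k=1}^{n+1}k^{m+1}(n+2-k)^{m}.
\end{displaymath}
Reindexing by $k\mapsto n+2-k$ turns this into $\sum_{k=1}^{n+1}k^{m}(n+2-k)^{m+1}=C(m,n+1)$, which is the claim. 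I expect the first Abel summation to be the main obstacle: one must recognize that the opaque fiber weight $k_{1}(l+1)-\binom{l+1}{2}$, after substituting $r=k_{1}-l$, factors as $\tfrac12(k_{1}+r)(k_{1}-r+1)$ and obeys the clean difference identity $h(r)-h(r+1)=r$, which is exactly what makes $S(k_{1})$ collapse to a pure power sum; once this is seen, the remaining telescoping and reindexing are routine.
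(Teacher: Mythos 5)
Your proof is correct, and I can confirm every step: the exchange of summation order is legitimate (for fixed $l$ the index $k_{2}$ runs from $l$ to $k_{1}-1$), the double telescope $\sum_{d=0}^{D}F_{V_{2}}=(D+1)^{m}-D^{m}$ holds because Lemma~\ref{lem:count_v2} is a second difference in $d=k_{2}-l$ (note this silently uses $0^{m}=0$, i.e.\ $m\geq 1$, and the separate $d=0$ case, both harmless here), the factorization $k_{1}(l+1)-\binom{l+1}{2}=\tfrac{1}{2}(k_{1}+r)(k_{1}-r+1)$ with $r=k_{1}-l$ checks out, as do $h(r)-h(r+1)=r$, $h(k_{1})=k_{1}$, both Abel summations, and the final reindexing against \eqref{eq:count}. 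Your route shares the paper's skeleton of necessity --- the fiber decomposition via $\eta$, the classes $\mathfrak{A\!C}_{m,n}^{\bullet}(k_{1},k_{2},l)$, and the starting expression \eqref{eq:init} are all supplied by the preceding lemmas --- but the evaluation of the triple sum is genuinely organized differently from Appendix~\ref{app:proof}. The paper keeps $k_{2}$ fixed, expands $F_{V_{2}}$ into three index-shifted copies of $\varphi(m,k_{1},k_{2},l)$, cancels term by term to get $k_{1}(k_{2}+1)^{m}-\sum_{l=1}^{k_{2}}l^{m}$ for each $k_{2}$, rearranges the remaining double sum, and finishes with two ``easy to check'' identities; you instead sum over $k_{2}$ first, which is where the translation-invariance of $F_{V_{2}}$ in $k_{2}-l$ does all the cancellation at once. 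The two routes meet at the same milestone --- your $S(k_{1})=\sum_{r=1}^{k_{1}}r^{m+1}$ is the paper's $k_{1}^{m+1}+\sum_{k_{2}=0}^{k_{1}-1}k_{2}^{m+1}$, and the paper's two closing identities are precisely your second summation by parts --- but your version is substantially shorter and more transparent: the identity $h(r)-h(r+1)=r$ explains structurally why the opaque fiber weight collapses to a pure power sum, where the appendix obtains the same collapse by brute-force shifting and cancellation.
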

\begin{proof}
	Putting \eqref{eq:cardinality_decomposition_1}, \eqref{eq:cardinality_decomposition_2} and 
	Lemmas~\ref{lem:count_v1}--\ref{lem:count_valid} together, we obtain
	\begin{align}\label{eq:init}
		F_{\fs\!\cc}(m,n) & =\sum_{(R,T)\in\mathfrak{A\!C}_{m,n}^{\bullet}}
		  {\bigl\lvert\eta^{-1}(R,T)\bigr\rvert}\\
\nonumber	& = \sum_{k_{1}=1}^{n+1}\sum_{k_{2}=0}^{k_{1}-1}\sum_{l=0}^{k_{2}}
		  \sum_{(R,T)\in\mathfrak{A\!C}_{m,n}^{\bullet}(k_{1},k_{2},l)}
		  {\bigl\lvert\eta^{-1}(R,T)\bigr\rvert}\\  
\nonumber	& = \sum_{k_{1}=1}^{n+1}\sum_{k_{2}=0}^{k_{1}-1}\sum_{l=0}^{k_{2}}
		  F_{V_{1}}(m,n,k_{1})\cdot F_{V_{2}}(m,k_{2},l)\cdot\left(k_{1}(l+1)-\binom{l+1}{2}\right)\\
\nonumber	& = \sum_{k_{1}=1}^{n+1}{F_{V_{1}}(m,n,k_{1})\sum_{k_{2}=0}^{k_{1}-1}\sum_{l=0}^{k_{2}}
		  F_{V_{2}}(m,k_{2},l)\cdot\left(k_{1}(l+1)-\binom{l+1}{2}\right)}.
	\end{align}
	The proof that this last sum equals $C(m,n+1)$ is not very difficult, but rather technical and longish. 
	Thus we have decided to provide this proof in every detail in Appendix~\ref{app:proof}.
\end{proof}

\begin{proof}[Proof of Theorem~\ref{thm:cardinality}]
	This follows from Lemma~\ref{lem:equality}.
\end{proof}

\begin{remark}
	The presented proof of Theorem~\ref{thm:cardinality} is obtained by counting the proper mergings
	of an $m$-star and an $n$-chain in a rather na\"ive way, and the conversion of the na\"ive counting
	formula into the desired formula is rather longish. Christian Krattenthaler proposed a family
	of objects that are also counted by $C(m,n+1)$: let $V_{1},V_{2}$, and $V_{3}$ be 
	disjoint sets with cardinalities $\lvert V_{1}\rvert=k_{1},\lvert V_{2}\rvert=k_{2}$, and 
	$\lvert V_{3}\rvert=k_{3}$, and denote by $\vec{K}_{k_{1},k_{2},k_{3}}$ the directed graph 
	$(V,\vec{E})$ whose vertex set is $V=V_{1}\cup V_{2}\cup V_{3}$, and whose set of edges is 
	$\vec{E}=(V_{1}\times V_{2})\cup(V_{2}\times V_{3})$. A monotone $(n+1)$-coloring of a directed graph
	is an assignment of at most $n+1$ different numbers to the vertices of the graph such that the numbers weakly 
	increase along directed edges. A standard counting argument shows that the number of monotone 
	$(n+1)$-colorings of $\vec{K}_{m+1,1,m}$ is precisely $C(m,n+1)$. A much more elegant, and perhaps much 
	simpler proof of Theorem~\ref{thm:cardinality} could thus be obtained by solving the following problem.
\end{remark}

\begin{problem}[Solved by Jonathan Farley, May 2013]
  \label{prob:bijection}
	Construct a bijection between the set $\mathfrak{S\!C}_{m,n}^{\bullet}$ of proper mergings of an 
	$m$-star and an $n$-chain, and the set $\Gamma_{n+1}(\vec{K}_{m+1,1,m})$ of monotone $(n+1)$-colorings
	of $\vec{K}_{m+1,1,m}$.
\end{problem}

\subsection{\textsc{Update}: Jonathan Farley's Solution of Problem~\ref{prob:bijection}}
  \label{sec:bijection_solution}
Recently, Jonathan Farley \cite{farley13bijection} has solved Problem~\ref{prob:bijection}. We will briefly explain
his bijection in this section. Let $\PP=(P,\leq_{\PP})$ and $\QQ=(Q,\leq_{\QQ})$ be two posets. We say that $\PP$ is 
\alert{bounded} if it has a unique minimal element, denoted by $0_{\PP}$, and a unique maximal element, denoted by 
$1_{\PP}$, and likewise for $Q$. The \alert{ordinal sum of $\PP$ and $\QQ$} is the poset $\PP\oplus\QQ=(P\cup Q,\leq)$, 
with $p\leq q$ if and only if either (i) $p,q\in P$ and $p\leq_{\PP}q$, (ii) $p,q\in Q$ and $p\leq_{\QQ}q$, or (iii) 
$p\in P$ and $q\in Q$. If $\PP$ has a unique maximal element $1_{\PP}$, and if $\QQ$ has a unique minimal element
$0_{\QQ}$, then the \alert{coalesced ordinal sum of $\PP$ and $\QQ$} is the ordinal sum of $\PP$ and $\QQ$ with 
$1_{\PP}$ and $0_{\QQ}$ identified, and will be denoted by $\PP\oplus_{c}\QQ$. Now let $\zeta:\PP\to\QQ$ be a map from 
$\PP$ to $\QQ$. We say that $\zeta$ is \alert{order-preserving} if $p\leq_{\PP}p'$ implies $\zeta(p)\leq_{\QQ}\zeta(p')$. 
Further, if $\PP$ and $\QQ$ are bounded, we say that $\zeta$ is \alert{bound-preserving} if $\zeta(0_{\PP})=0_{\QQ}$ and 
$\zeta(1_{\PP})=0_{\QQ}$. 
	
Let $\ac_{k}$ denote an antichain with $k$ elements, let $\cc_{k}$ denote a chain with $k$ elements, and let 
$\BB_{k}$ denote the Boolean lattice with $2^{k}$ elements. For two posets $\PP$ and $\QQ$, let $O\!P(\PP,\QQ)$ 
denote the set of order-preserving maps from $\PP$ to $\QQ$, and if $\PP$ and $\QQ$ are lattices, let $B\!P(\PP,\QQ)$ 
denote the set of bound-preserving lattice homomorphisms from $\PP$ to $\QQ$. 

It is easy to see that 
\begin{displaymath}
	\Gamma_{n+1}(\vec{K}_{m+1,1,m})\cong O\!P\bigl(\ac_{m+1}\oplus\ac_{1}\oplus\ac_{m},\cc_{n+1}\bigr).
\end{displaymath}

Using Priestley's Representation Theorem For Distributive Lattices, see for instance 
\cite{davey02introduction}*{Theorem~11.23}, we conclude that 
\begin{displaymath}
	\left\lvert O\!P\bigl(\ac_{m+1}\oplus\ac_{1}\oplus\ac_{m},\cc_{n+1}\bigr)\right\rvert
	  = \left\lvert B\!P\bigl(\cc_{n+2},\BB_{m}\oplus_{c}\BB_{1}\oplus_{c}\BB_{m+1}\bigr)\right\rvert.
\end{displaymath}
Since $\cc_{n+2}$ is a chain, we find 
\begin{displaymath}
	\left\lvert B\!P\bigl(\cc_{n+2},\BB_{m}\oplus_{c}\BB_{1}\oplus_{c}\BB_{m+1}\bigr)\right\rvert
	  = \left\lvert O\!P\bigl(\cc_{n+2},\BB_{m}\oplus_{c}\BB_{1}\oplus_{c}\BB_{m+1}\bigr)\right\rvert,
\end{displaymath}
and if we forget about the bounds, we obtain
\begin{displaymath}
	\left\lvert O\!P\bigl(\cc_{n+2},\BB_{m}\oplus_{c}\BB_{1}\oplus_{c}\BB_{m+1}\bigr)\right\rvert
	  = \left\lvert O\!P\bigl(\cc_{n},(\BB_{m}\setminus\{0_{\BB_{m}}\})\oplus\BB_{0}\oplus\BB_{m+1})\right\rvert.
\end{displaymath}
We notice that order-preserving maps from a chain to a poset $\PP$ are in bijection with multichains of $\PP$. Clearly,
to every multichain in $\PP\oplus\QQ$, we can associate a unique multichain in $\QQ\oplus\PP$, by exchanging the 
corresponding components. Hence, the order of the summands does not really play a role, and we obtain 
\begin{displaymath}
	\left\lvert O\!P\bigl(\cc_{n},(\BB_{m}\setminus\{0_{\BB_{m}}\})\oplus\BB_{0}\oplus\BB_{m+1}\right\rvert
	  = \left\lvert O\!P\bigl(\cc_{n},\BB_{0}\oplus\BB_{m+1}\oplus_{c}\BB_{m})\right\rvert.
\end{displaymath}

The next step is to construct a bijection from $O\!P\bigl(\cc_{n},\BB_{0}\oplus\BB_{m+1}\oplus_{c}\BB_{m})$ to 
$\mathfrak{S\!C}_{m,n}^{\bullet}$. For that, let $X=\{x\}$, let $Y$ be a poset which is order-isomorphic to the 
Boolean lattice whose elements are subsets of $\{0,1,2,\ldots,m\}$ (via the map $\varphi_{Y}$), and let
$Z$ be a poset which is order-isomorphic to the Boolean lattice whose elements are subsets of $\{1,2,\ldots,m\}$ 
(via the map $\varphi_{Z}$), and let $\PP_{1,m+1,m}=X\oplus Y\oplus_{c}Z$. Now let 
$\zeta\in O\!P\bigl(\cc_{n},\PP_{1,m+1,m})$, and suppose that $\zeta(c_{i})=d_{i}$ for all $i\in\{1,2,\ldots,n\}$. 
Define a proper merging $(R,T)_{\zeta}\in\mathfrak{S\!C}_{m,n}^{\bullet}$ as follows:
\begin{enumerate}
	\item[(1)] if $d_{i}=x$, then $(c_{i},s)\in T$ for all $s\in S$,
	\item[(2a)] if $d_{i}\in Y\setminus Z$, then $(s_{0},c_{i})\in R$ if and only if $0\in\varphi_{Y}(d_{i})$,
	\item[(2b)] if $d_{i}\in Y\setminus Z$, then $(c_{i},s_{j})\in T$ for all $j\in\{1,2,\ldots,k\}$ 
	  if and only if $j\notin\varphi_{Y}(d_{i})$,\quad and
	\item[(3)] if $d_{i}\in Z$, then $(s_{0},c_{i})\in R$ and $(s_{j},c_{i})\in R$ for all $j\in\{1,2,\ldots,k\}$ 
	  if and only if $j\in\varphi_{Z}(d_{i})$.
\end{enumerate}
It was shown by Farley \cite{farley13bijection} that this construction is indeed a bijection. See 
Appendix~\ref{app:illustration_bijection} for an illustration.

\section{Counting Galois Connections between Chains and Modified Boolean Lattices}
  \label{sec:galois}
In the spirit of \cite{muehle12counting}*{Sections~3.4~and~5.2}, we can use the enumeration formula for the
proper mergings of an $m$-star and an $n$-chain to determine the number of Galois connections between
$\CL(C,C,\not\geq_{\cc})$ and $\CL(S,S,\not\geq_{\fs})$. In particular, we prove the following proposition 
within this section.
\begin{proposition}
  \label{prop:galois_ballon_chain}
	Let $\fs=(S,\leq_{\fs})$ be an $m$-star and let $\cc=(C,\leq_{\cc})$ be an $n$-chain. 
	The number of Galois connections between $\CL(C,C,\not\geq_{\cc})$ and 
	$\CL(S,S,\not\geq_{\fs})$ is $\sum_{k=1}^{n+1}{k^{m}}$.
\end{proposition}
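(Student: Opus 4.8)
The plan is to translate the counting of Galois connections into a counting of bonds, and then enumerate the bonds directly. By the theorem from Formal Concept Analysis alluded to in the introduction (see \cite{ganter99formal}), the Galois connections between the concept lattices $\CL(C,C,\not\geq_{\cc})$ and $\CL(S,S,\not\geq_{\fs})$ are in bijection with the bonds from $(C,C,\not\geq_{\cc})$ to $(S,S,\not\geq_{\fs})$, i.e.\ with the relations $R\subseteq C\times S$ such that every row $c^{R}$ is an intent of $(S,S,\not\geq_{\fs})$ and every column $s^{R}$ is an extent of $(C,C,\not\geq_{\cc})$. Hence it suffices to count these bonds.

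To characterize the bonds, I would first record the relevant intents and extents. By the computation in Section~\ref{sec:m_stars}, the intents of $(S,S,\not\geq_{\fs})$ are precisely the $2^{m}$ subsets of $A=S\setminus\{s_{0}\}$ together with the full set $S$; in particular a subset of $S$ is an intent if and only if it is contained in $A$ or equal to $S$, so $S$ is the unique intent containing the center $s_{0}$. By Section~\ref{sec:n_chains}, the extents of $(C,C,\not\geq_{\cc})$ are exactly the $n+1$ initial segments $\{c_{1},\ldots,c_{t}\}$ with $t\in\{0,1,\ldots,n\}$. Thus the row condition says that for each $c\in C$ we have $s_{0}\in c^{R}\Rightarrow c^{R}=S$ (relating $c$ to the center forces $c$ to relate to every element of the star), while the column condition says that each column $s^{R}$ is an initial segment of the chain.

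I would then encode a bond by its column thresholds: for each $s\in S$ write $s^{R}=\{c_{1},\ldots,c_{t_{s}}\}$ with $t_{s}\in\{0,1,\ldots,n\}$. The column condition is automatically built into this encoding, and the row condition collapses to a single coupling inequality. Indeed $(c_{j},s_{0})\in R$ exactly when $j\le t_{s_{0}}$ and $(c_{j},s_{i})\in R$ exactly when $j\le t_{s_{i}}$, so the implication $s_{0}\in c_{j}^{R}\Rightarrow s_{i}\in c_{j}^{R}$, required for all $j$ and all $i\in\{1,2,\ldots,m\}$, is equivalent to $t_{s_{0}}\le t_{s_{i}}$ for every such $i$. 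Consequently bonds correspond bijectively to tuples $(t_{s_{0}},t_{s_{1}},\ldots,t_{s_{m}})\in\{0,1,\ldots,n\}^{m+1}$ satisfying $t_{s_{0}}\le t_{s_{i}}$ for all $i\in\{1,2,\ldots,m\}$. Counting these by conditioning on $t_{s_{0}}$: writing $t_{s_{0}}=n+1-k$ for $k\in\{1,2,\ldots,n+1\}$, each of the remaining thresholds may independently take any of the $k$ values in $\{n+1-k,\ldots,n\}$, giving $k^{m}$ tuples, and summing over $k$ yields $\sum_{k=1}^{n+1}k^{m}$, as claimed.

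The only genuinely delicate points are invoking the FCA correspondence in the correct direction and verifying that the center $s_{0}$ produces exactly the single domination constraint $t_{s_{0}}\le t_{s_{i}}$ rather than a more complicated family of conditions. I expect the direction of the bond to be a non-issue for the final count: carrying out the dual computation with bonds from $(S,S,\not\geq_{\fs})$ to $(C,C,\not\geq_{\cc})$ (rows now being final segments of the chain, columns now forced to be empty or to contain $s_{0}$) leads to the same inequality after reindexing, hence to the same total $\sum_{k=1}^{n+1}k^{m}$. I would also check the extremal cases $t_{s_{0}}=0$ (the constraint is vacuous) and the empty or full rows to confirm that the threshold encoding is a genuine bijection onto the set of bonds.
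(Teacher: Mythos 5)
Your proof is correct and follows essentially the same route as the paper: Theorem~\ref{thm:galois_connections} reduces the problem to counting bonds from $(C,C,\not\geq_{\cc})$ to $(S,S,\not\geq_{\fs})$, and your conditioning on the threshold $t_{s_{0}}$ is exactly the paper's conditioning in Proposition~\ref{prop:half_empty_bonds} on the number $k$ of full rows of $T$ (forced by the fact that $S$ is the only intent containing $s_{0}$), after which each remaining column is a free initial segment. The only difference is presentational: where the paper phrases the bond as a proper merging $(\emptyset,T)$ and cites \cite{muehle12counting}*{Proposition~5.8} for the inner count $(n-k+1)^{m}$, your threshold encoding re-derives that count in place, making the argument self-contained.
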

  
We have seen in Section~\ref{sec:n_chains} that $\CL(C,C,\not\geq_{\cc})$ is isomorphic to an 
$(n+1)$-chain, and the reasoning in Section~\ref{sec:m_stars} implies that $\CL(S,S,\not\geq_{\fs})$ can be 
constructed as follows: let $\mathcal{B}_{m}$ denote the Boolean 
lattice with $2^{m}$ elements. Replacing the bottom element of $\mathcal{B}_{m}$ by a $2$-chain yields 
a lattice which we call \alert{$m$-balloon}, and we denote it by $\mathcal{B}_{m}^{(1)}$.
Figure~\ref{fig:41_balloon} shows the Hasse diagram of $\mathcal{B}_{4}^{(1)}$. The labels attached to some 
of the nodes indicate how $\mathcal{B}_{4}^{(1)}$ arises as the concept lattice of the contraordinal scale 
of the $4$-star shown in Figure~\ref{fig:4_star}.

\begin{remark}
	The construction of $\mathcal{B}_{m}^{(1)}$ can be generalized easily, by replacing the bottom 
	element of $\mathcal{B}_{m}$ by an $(l+1)$-chain for some $l>1$. We call the corresponding lattice 
	an \alert{$(m,l)$-balloon}, and denote it by $\mathcal{B}_{m}^{(l)}$. However, the case $l>1$ is not 
	considered further in this article, even though it can be considered as the concept lattice of the 
	contraordinal scale of the poset that arises from an $m$-star by replacing the unique bottom element 
	by an $l$-chain. 
\end{remark}

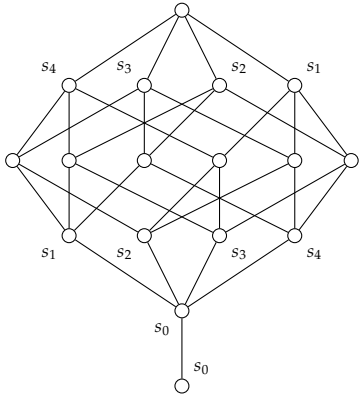
\begin{figure}[t]
	\centering
	\begin{tikzpicture}\scriptsize
		\def\x{1};
		\def\y{1};
		\draw(3.5*\x,1*\y) node[draw,circle,scale=.8,label=above right:$s_{0}$](b0){};
		\draw(3.5*\x,2*\y) node[draw,circle,scale=.8,label=below left:$s_{0}$](b1){};
		\draw(2*\x,3*\y) node[draw,circle,scale=.8,label=below left:$s_{1}$](b2){};
		\draw(3*\x,3*\y) node[draw,circle,scale=.8,label=below left:$s_{2}$](b3){};
		\draw(4*\x,3*\y) node[draw,circle,scale=.8,label=below right:$s_{3}$](b4){};
		\draw(5*\x,3*\y) node[draw,circle,scale=.8,label=below right:$s_{4}$](b5){};
		\draw(1.25*\x,4*\y) node[draw,circle,scale=.8](b6){};
		\draw(2*\x,4*\y) node[draw,circle,scale=.8](b7){};
		\draw(3*\x,4*\y) node[draw,circle,scale=.8](b8){};
		\draw(4*\x,4*\y) node[draw,circle,scale=.8](b9){};
		\draw(5*\x,4*\y) node[draw,circle,scale=.8](b10){};
		\draw(5.75*\x,4*\y) node[draw,circle,scale=.8](b11){};
		\draw(2*\x,5*\y) node[draw,circle,scale=.8,label=above left:$s_{4}$](b12){};
		\draw(3*\x,5*\y) node[draw,circle,scale=.8,label=above left:$s_{3}$](b13){};
		\draw(4*\x,5*\y) node[draw,circle,scale=.8,label=above right:$s_{2}$](b14){};
		\draw(5*\x,5*\y) node[draw,circle,scale=.8,label=above right:$s_{1}$](b15){};
		\draw(3.5*\x,6*\y) node[draw,circle,scale=.8](b16){};
		\draw(b0) -- (b1);
		\draw(b1) -- (b2);
		\draw(b1) -- (b3);
		\draw(b1) -- (b4);
		\draw(b1) -- (b5);
		\draw(b2) -- (b6);
		\draw(b2) -- (b7);
		\draw(b2) -- (b8);
		\draw(b3) -- (b6);
		\draw(b3) -- (b9);
		\draw(b3) -- (b10);
		\draw(b4) -- (b7);
		\draw(b4) -- (b9);
		\draw(b4) -- (b11);
		\draw(b5) -- (b8);
		\draw(b5) -- (b10);
		\draw(b5) -- (b11);
		\draw(b6) -- (b12);
		\draw(b6) -- (b13);
		\draw(b7) -- (b12);
		\draw(b7) -- (b14);
		\draw(b8) -- (b13);
		\draw(b8) -- (b14);
		\draw(b9) -- (b12);
		\draw(b9) -- (b15);
		\draw(b10) -- (b13);
		\draw(b10) -- (b15);
		\draw(b11) -- (b14);
		\draw(b11) -- (b15);
		\draw(b12) -- (b16);
		\draw(b13) -- (b16);
		\draw(b14) -- (b16);
		\draw(b15) -- (b16);
	\end{tikzpicture}
	\caption{The Hasse diagram of $\mathcal{B}_{4}^{(1)}$.}
	\label{fig:41_balloon}
\end{figure}

Before we enumerate the Galois connections between $m$-balloons and $(n+1)$-chains, we recall the 
definitions. A \alert{Galois connection} between two posets 
$(P,\leq_{P})$ and $(Q,\leq_{Q})$ is a pair $(\varphi,\psi)$ of maps
\begin{displaymath}
	\varphi: P\to Q\quad\text{and}\quad\psi:Q\to P,
\end{displaymath}
satisfying 
\begin{align*}
	& p_{1} \leq_{P} p_{2} \quad\text{implies}\quad\varphi p_{1}\geq_{Q}\varphi p_{2},\\
	& q_{1} \leq_{Q} q_{2} \quad\text{implies}\quad\psi q_{1}\geq_{P}\psi q_{2},\\
	& p\leq_{P}\psi\varphi p, \quad\text{and}\quad
	 q\leq_{Q}\varphi\psi q,
\end{align*}
for all $p,p_{1},p_{2}\in P$ and $q,q_{1},q_{2}\in Q$. 
Recall that, given formal contexts $\KK_{1}=(G,M,I)$ and $\KK_{2}=(H,N,J)$, a relation 
$R\subseteq G\times H$, is called \alert{dual bond from $\KK_{1}$ to $\KK_{2}$} if for every 
$g\in G$, the set $g^{R}$ is an extent of $\KK_{2}$ and for every $h\in H$, the set $h^{R}$ is an 
extent of $\KK_{1}$. In other words, $R$ is a dual bond from $\KK_{1}$ to $\KK_{2}$ if and only if $R$ is a 
bond from $\KK_{1}$ to the dual\footnote{Let $\KK=(G,M,I)$ be a formal context. The dual context $\KK^{d}$ 
of $\KK$ is given by $(M,G,I^{-1})$ and satisfies $\CL(\KK^{d})\cong\CL(\KK)^{d}$, where $\CL(\KK)^{d}$ is 
the (order-theoretic) dual of the lattice $\CL(\KK)$.} context $\KK_{2}^{d}$.
In the case, where the posets $(P,\leq_{P})\cong\CL(\KK_{1})$ and $(Q,\leq_{Q})\cong\CL(\KK_{2})$ are 
concept lattices, we can interpret the Galois connections between $(P,\leq_{P})$ and $(Q,\leq_{Q})$ as dual 
bonds from $\KK_{1}$ to $\KK_{2}$ as described in the following theorem.

\begin{theorem}[\cite{ganter99formal}*{Theorem~53}]
  \label{thm:galois_connections}
	Let $(G,M,I)$ and $(H,N,J)$ be formal contexts. For every dual bond $R\subseteq G\times H$, the 
	maps
	\begin{align*}
		\varphi_{R}\bigl(X,X^{I}\bigr)=\bigl(X^{R},X^{RJ}\bigr),\quad\text{and}\quad
		  \psi_{R}\bigl(Y,Y^{J}\bigr)=\bigl(Y^{R},Y^{RI}\bigr),
	\end{align*}
	where $X$ and $Y$ are extents of $(G,M,I)$ respectively $(H,N,J)$, form a Galois connection 
	between $\CL(G,M,I)$ and $\CL(H,N,J)$. Moreover, every Galois connection $(\varphi,\psi)$ 
	induces a dual bond from $(G,M,I)$ to $(H,N,J)$ by 
	\begin{align*}
		R_{(\varphi,\psi)}=\bigl\{(g,h)\mid\gamma g\leq\psi\gamma h\bigr\}=
		  \bigl\{(g,h)\mid\gamma h\leq\varphi\gamma g\bigr\},
	\end{align*}
	where $\gamma$ is the map defined in \eqref{eq:maps}. In particular, we have
	\begin{align*}
		\varphi_{R_{(\varphi,\psi)}}=\varphi,\quad\psi_{R_{(\varphi,\psi)}}=\psi,\quad
		  \text{and}\quad R_{(\varphi_{R},\psi_{R})}=R.
	\end{align*}
\end{theorem}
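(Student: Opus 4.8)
The plan is to prove the two directions of the correspondence separately and then check that the assignments $R\mapsto(\varphi_R,\psi_R)$ and $(\varphi,\psi)\mapsto R_{(\varphi,\psi)}$ are mutually inverse. Throughout I write $\KK_1=(G,M,I)$ and $\KK_2=(H,N,J)$, I use $(\cdot)^R$ for the derivation operators of the formal context $(G,H,R)$, and for a concept $\mathfrak{b}$ I write $\mathrm{ext}(\mathfrak{b})$ for its extent. The only background facts I would invoke are the standard ones recalled in Section~\ref{sec:formal_concept_analysis}: an intersection of extents is again an extent, $\bigl(A^{II},A^{I}\bigr)$ is a concept, and $\gamma h\leq\mathfrak{b}$ holds precisely when $h\in\mathrm{ext}(\mathfrak{b})$.

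First I would check that $\varphi_R$ and $\psi_R$ actually land in the concept lattices. Writing $X^{R}=\bigcap_{g\in X}g^{R}$ exhibits $X^{R}$ as an intersection of extents of $\KK_2$, since every $g^{R}$ is an extent of $\KK_2$ by the dual-bond hypothesis; hence $X^{R}$ is an extent and $\bigl(X^{R},X^{RJ}\bigr)$ is a genuine concept of $\KK_2$, and symmetrically $\bigl(Y^{R},Y^{RI}\bigr)$ is a concept of $\KK_1$. The Galois axioms then follow formally: antitonicity of $\varphi_R$ and $\psi_R$ is immediate from the antitonicity of the $R$-derivation (if $X_1\subseteq X_2$ then $X_2^{R}\subseteq X_1^{R}$), and on extents the extensive laws reduce to the standard inclusions $X\subseteq X^{RR}$ and $Y\subseteq Y^{RR}$.

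For the converse I would first record the adjunction $\gamma g\leq\psi\gamma h\iff\gamma h\leq\varphi\gamma g$, which follows formally from antitonicity together with the extensive laws $\mathfrak{b}\leq\psi\varphi\mathfrak{b}$ and $\mathfrak{b}\leq\varphi\psi\mathfrak{b}$; this shows that the two descriptions of $R_{(\varphi,\psi)}$ agree. To see that $R\colonequals R_{(\varphi,\psi)}$ is a dual bond I compute $g^{R}=\bigl\{h\mid\gamma h\leq\varphi\gamma g\bigr\}$, and since $\gamma h\leq\mathfrak{b}$ is equivalent to $h\in\mathrm{ext}(\mathfrak{b})$ this gives $g^{R}=\mathrm{ext}(\varphi\gamma g)$, an extent of $\KK_2$; symmetrically $h^{R}=\mathrm{ext}(\psi\gamma h)$ is an extent of $\KK_1$.

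It remains to show the constructions are mutually inverse. For $R_{(\varphi_R,\psi_R)}=R$ the key point is that $(g^{II})^{R}=g^{R}$: the inclusion $\{g\}\subseteq g^{II}$ gives $(g^{II})^{R}\subseteq g^{R}$, while conversely, for any $h$ with $g\,R\,h$ the set $h^{R}$ is an extent of $\KK_1$, hence closed, so $g\in h^{R}$ forces $g^{II}\subseteq h^{R}$ and thus $h\in(g^{II})^{R}$. Consequently $\mathrm{ext}(\varphi_R\gamma g)=g^{R}$ and $g\,R_{(\varphi_R,\psi_R)}\,h\iff g\,R\,h$. For $\varphi_{R_{(\varphi,\psi)}}=\varphi$ I would use that every concept decomposes as $\bigl(X,X^{I}\bigr)=\bigvee_{g\in X}\gamma g$ and that a Galois connection turns joins into meets, $\varphi\bigl(\bigvee_i\mathfrak{b}_i\bigr)=\bigwedge_i\varphi(\mathfrak{b}_i)$; combined with $X^{R}=\bigcap_{g\in X}\mathrm{ext}(\varphi\gamma g)=\mathrm{ext}\bigl(\bigwedge_{g\in X}\varphi\gamma g\bigr)$ this yields $\varphi_R\bigl(X,X^{I}\bigr)=\varphi\bigl(X,X^{I}\bigr)$, and dually $\psi_{R_{(\varphi,\psi)}}=\psi$. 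The step I expect to be the main obstacle is exactly this last one: passing from the pointwise definition of $R_{(\varphi,\psi)}$, phrased through the object concepts $\gamma g$, to the global behaviour of $\varphi$ and $\psi$ on all of $\CL(\KK_1)$ and $\CL(\KK_2)$, which forces one to use both the join-decomposition of arbitrary concepts into object concepts and the join-to-meet conversion property of Galois connections.
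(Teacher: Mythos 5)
Your argument is correct, but there is nothing in the paper to compare it against: the statement is imported verbatim from Ganter and Wille (\cite{ganter99formal}*{Theorem~53}) and is never proved in this article, so your proposal should be judged on its own. On its own it holds up, and it is essentially the standard textbook argument. The first direction is fine: $X^{R}=\bigcap_{g\in X}g^{R}$ is an intersection of extents of $(H,N,J)$, hence an extent, and the Galois axioms reduce to antitonicity of $(\cdot)^{R}$ and $X\subseteq X^{RR}$. Your adjunction $\gamma g\leq\psi\gamma h\iff\gamma h\leq\varphi\gamma g$ is correctly derived from antitonicity plus the two extensive laws, and the identification $g^{R_{(\varphi,\psi)}}=\mathrm{ext}(\varphi\gamma g)$ via ``$\gamma h\leq\mathfrak{b}$ iff $h$ lies in the extent of $\mathfrak{b}$'' is sound. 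The closure step $(g^{II})^{R}=g^{R}$ is exactly the place where the dual-bond hypothesis (closedness of $h^{R}$ in $(G,M,I)$) does its work, and you deploy it correctly to get $R_{(\varphi_{R},\psi_{R})}=R$. For the step you flagged as the main obstacle, your two ingredients are indeed what is needed and both are valid: every concept satisfies $\bigl(X,X^{I}\bigr)=\bigvee_{g\in X}\gamma g$, and an antitone Galois connection between complete lattices turns arbitrary joins into meets (provable from the adjunction as you indicate; note this also covers the empty family, giving $\varphi$ of the bottom concept equal to the top concept, which is what makes the case $X=\emptyset$ harmless). So the proposal is a complete and correct reconstruction of the cited result.
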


Since chains are self-dual, the previous theorem implies that every Galois connection between an 
$(n+1)$-chain and an $m$-balloon corresponds to a bond from $(C,C,\not\geq_{\cc})$ to $(S,S,\not\geq_{\fs})$. 
In view of Proposition~\ref{prop:classification_mergings} this means that every Galois connection between an 
$(n+1)$-chain and an $m$-balloon corresponds to a proper merging of $\fs$ and $\cc$ which is 
of the form $(\emptyset,T)$. These are relatively easy to enumerate as our next proposition shows.

\begin{proposition}
  \label{prop:half_empty_bonds}
	Let $\fs$ be an $m$-star and let $\cc$ be an $n$-chain. The number of proper mergings
	of $\fs$ and $\cc$ which are of the form $(\emptyset,T)$ is $\sum_{k=1}^{n+1}{k^{m}}$.
\end{proposition}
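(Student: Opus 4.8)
The plan is to reduce the count to a bond-counting problem, and then to translate the bond conditions into elementary inequalities among ``column heights'' that can be summed directly. The key observation is that setting $R=\emptyset$ collapses almost all of the merging axioms.

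First I would determine which of the four conditions of Proposition~\ref{prop:classification_mergings} survive when $R=\emptyset$. With $R=\emptyset$, condition~(1) holds vacuously, since every row of $R$ is the empty set $\emptyset$, which is an intent of $(C,C,\not\geq_{\cc})$ (take $i=n+1$ in the description of Section~\ref{sec:n_chains}), and every column of $R$ is $\emptyset$, which is an extent of $(S,S,\not\geq_{\fs})$ (it is the extent of the top concept $(\emptyset,S)$ from Section~\ref{sec:m_stars}). Moreover $R\circ T=\emptyset$ and $T\circ R=\emptyset$, so conditions~(3) and~(4) hold trivially, and $R\cap T^{-1}=\emptyset$, so properness is automatic. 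Hence $(\emptyset,T)$ is a proper merging of $\fs$ and $\cc$ if and only if condition~(2) holds, i.e. if and only if $T$ is a bond from $(C,C,\not\geq_{\cc})$ to $(S,S,\not\geq_{\fs})$; it thus suffices to count these bonds.

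Next I would recall, from Sections~\ref{sec:m_stars} and~\ref{sec:n_chains}, the relevant extents and intents. The intents of $(S,S,\not\geq_{\fs})$ are exactly $S$ together with all subsets of $A=S\setminus\{s_{0}\}$; crucially, $S$ is the \emph{unique} intent containing $s_{0}$. The extents of $(C,C,\not\geq_{\cc})$ are exactly the initial segments $\{c_{1},c_{2},\ldots,c_{j}\}$ for $j\in\{0,1,\ldots,n\}$. I would then encode a candidate relation $T\subseteq C\times S$ by its columns, writing $(c_{i},s)\in T$ if and only if $i\leq t_{s}$: the requirement that each column $s^{T}$ be an extent of $(C,C,\not\geq_{\cc})$ is precisely the statement that each column is an initial segment of the chain, so that bonds whose columns are extents correspond bijectively to tuples of heights $(t_{s_{0}},t_{s_{1}},\ldots,t_{s_{m}})\in\{0,1,\ldots,n\}^{m+1}$.

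The key step is to translate the remaining requirement — that each row $c_{i}^{T}$ be an intent of $(S,S,\not\geq_{\fs})$ — into an inequality on the heights. Since $s_{0}\in c_{i}^{T}$ precisely when $i\leq t_{s_{0}}$, and since $S$ is the only intent containing $s_{0}$, the row $c_{i}^{T}$ is an intent exactly when $i\leq t_{s_{0}}$ forces $c_{i}^{T}=S$, that is, forces $i\leq t_{s_{j}}$ for all $j$; and for $i>t_{s_{0}}$ the row lies in $A$ and is automatically an intent. Evaluating at $i=t_{s_{0}}$ shows this holds for every $i$ if and only if $t_{s_{0}}\leq t_{s_{j}}$ for all $j\in\{1,2,\ldots,m\}$. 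The valid bonds $T$ are therefore in bijection with the tuples $(t_{s_{0}},t_{s_{1}},\ldots,t_{s_{m}})\in\{0,1,\ldots,n\}^{m+1}$ with $t_{s_{0}}\leq t_{s_{j}}$ for all $j$. Finally I would count these: fixing $t_{s_{0}}=t$, each of the $m$ remaining heights ranges independently over $\{t,t+1,\ldots,n\}$, giving $(n-t+1)^{m}$ choices, so the total is $\sum_{t=0}^{n}(n-t+1)^{m}=\sum_{k=1}^{n+1}k^{m}$ after substituting $k=n-t+1$. I expect the only genuinely delicate point to be the translation step, where one must invoke the uniqueness of $S$ as the intent of $(S,S,\not\geq_{\fs})$ containing $s_{0}$ to see that the full family of row conditions collapses to the single inequality $t_{s_{0}}\leq\min_{j}t_{s_{j}}$; the remaining reductions and the final summation are routine bookkeeping.
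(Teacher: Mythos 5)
Your proof is correct, and its skeleton matches the paper's: both arguments reduce the count to bonds $T$ from $(C,C,\not\geq_{\cc})$ to $(S,S,\not\geq_{\fs})$, and both hinge on the observation that $S$ is the \emph{unique} intent of $(S,S,\not\geq_{\fs})$ containing $s_{0}$, so that the extent of the $s_{0}$-column forces an initial block of full rows. Indeed, your parameter $t_{s_{0}}$ is exactly the paper's maximal full-row index $k$, and both proofs end with the same sum $\sum_{t=0}^{n}(n-t+1)^{m}=\sum_{k=1}^{n+1}k^{m}$. The genuine difference lies in the inner count: the paper restricts $T$ to $C_{n-k}\times\bigl(S\setminus\{s_{0}\}\bigr)$, observes that this restriction must be a bond into the contraordinal scale of an $m$-antichain (whose concept lattice is the Boolean lattice $\mathcal{B}_{m}$), and then cites \cite{muehle12counting}*{Proposition~5.8} for the value $(n-k+1)^{m}$; you instead encode every candidate relation by its tuple of column heights $(t_{s_{0}},t_{s_{1}},\ldots,t_{s_{m}})\in\{0,1,\ldots,n\}^{m+1}$ and show directly that the bond conditions collapse to the single inequality $t_{s_{0}}\leq t_{s_{j}}$ for all $j$, which makes the count immediate. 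You also spell out why, with $R=\emptyset$, conditions (1), (3) and (4) of Proposition~\ref{prop:classification_mergings} and the properness requirement $R\cap T^{-1}=\emptyset$ hold automatically --- a reduction the paper uses only tacitly (it silently identifies proper mergings $(\emptyset,T)$ with bonds $T$ in both directions). What your version buys is a self-contained, citation-free argument; what the paper's version buys is brevity and uniformity with the bond-counting machinery it borrows from \cite{muehle12counting} elsewhere.
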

\begin{proof}
	Let $(\emptyset,T)$ be a proper merging of $\fs$ and $\cc$. Thus, $T\subseteq C\times S$ is a bond 
	from $(C,C,\not\geq_{\cc})$ to $(S,S,\not\geq_{\fs})$. This means, for every $c\in C$, the row 
	$c^{T}$ is an intent of $(S,S,\not\geq_{\fs})$, and thus must be either the set $S$ or a set of the 
	form $S\setminus(B\cup\{s_{0}\})$ for some $B\subseteq S\setminus\{s_{0}\}$. Moreover, for every 
	$s\in S$, the column $s^{T}$ is an extent of $(C,C,\not\geq_{\cc})$, and thus must be of the form 
	$\{c_{1},c_{2},\ldots,c_{i-1}\}$ for some $i\in\{1,2,\ldots,n+1\}$. (The case $i=1$ is to be 
	interpreted as the empty set.) 
	
	Since $T$ is a bond from $(C,C,\not\geq_{\cc})$ to $(S,S,\not\geq_{\fs})$, we notice that if 
	$c_{i}\;T\;s_{j}$, then $c_{k}\;T\;s_{j}$ for every $k\in\{1,2,\ldots,i\}$. In particular, if the 
	$i$-th row of $T$ is a full row, then every row above the $i$-th row is also a full row. Furthermore,
	if $c_{i}\;T\;s_{0}$, then $c_{i}\;T\;s_{k}$ for every $k\in\{0,1,\ldots,m\}$, since the only intent
	of $(S,S,\not\geq_{\fs})$ that contains $\{s_{0}\}$ is $S$ itself. 
	
	\begin{figure}\scriptsize
		\centering
		\begin{tabular}{c@{}l}
			\begin{tabular}{|c||c|c|c|c|c|}
				\hline
				$T$ & $s_{0}$ & $s_{1}$ & $s_{2}$ & $\cdots$ & $s_{m}$ \\
				\hline\hline
				$c_{1}$ & $\times$ & $\times$ & $\times$ & $\cdots$ & $\times$ \\
				\hline
				$c_{2}$ & $\times$ & $\times$ & $\times$ & $\cdots$ & $\times$ \\
				\hline
				$\vdots$ & $\vdots$ & $\vdots$ & $\vdots$ & $\vdots$ & $\vdots$ \\
				\hline
				$c_{k}$ & $\times$ & $\times$ & $\times$ & $\cdots$ & $\times$ \\
				\hline
				$c_{k+1}$ & & & & & \\
				\hline
				$c_{k+2}$ & & & & & \\
				\hline
				$\vdots$ & & & & & \\
				\hline
				$c_{n}$ & & & & & \\
				\hline
			\end{tabular}
			& 
			$\begin{array}{l}
				\\
				\rBrace{6.8ex}{\parbox{7em}{fixed block,\\ bond property satisfied}} \\ 
				\rBrace{6.8ex}{\parbox{7em}{no crosses in\\ first column}} \\
			\end{array}$
		\end{tabular}
		\caption{Illustration of the situation with $k$ full rows in $T$.}
		\label{fig:restriction}
	\end{figure}

	Now let $k\in\{1,2,\ldots,n\}$ be the maximal index such that $c_{k}^{T}=S$, and write
	$C_{n-k}=\{c_{k+1},c_{k+2},\ldots,c_{n}\}$. We have just seen 
	that this implies that $c_{j}^{T}=S$ for $j\leq k$, and $(c_{j},s_{0})\notin T$ for $j>k$. Hence,
	$T$ is a bond from $\bigl(C,C,\not\geq_{\cc})$ to $(S,S,\not\geq_{\fs}\bigr)$ if and only if the 
	restriction of $T$ to $C_{n-k}\times \bigl(S\setminus\{s_{0}\}\bigr)$ is a bond from 
	$\bigl(C_{n-k},C_{n-k},\not\geq_{\cc}\bigr)$ to 
	$\bigl(S\setminus\{s_{0}\},S\setminus\{s_{0}\},\not\geq_{\fs}\bigr)$. See 
	Figure~\ref{fig:restriction} for an illustration. Clearly, $\CL(C_{n-k},C_{n-k},\not\geq_{\cc})$ is
	isomorphic to an $(n-k+1)$-chain and $\CL(S\setminus\{s_{0}\},S\setminus\{s_{0}\},\not\geq_{\fs})$ is
	isomorphic to the Boolean lattice $\mathcal{B}_{m}$. It follows from 
	\cite{muehle12counting}*{Proposition~5.8} that the number of bonds from 
	$(C_{n-k},C_{n-k},\not\geq_{\cc})$ to $(S\setminus\{s_{0}\},S\setminus\{s_{0}\},\not\geq_{\fs})$ is
	$(n-k+1)^{m}$. 
	
	The number $g(m,n)$ of proper mergings of $\fs$ and $\cc$ which are of the form $(\emptyset,T)$ is 
	now the sum over all proper mergings of $\fs$ and $\cc$ which are of the form $(\emptyset,T)$, and 
	where the first $k$ rows of $T$ are full rows. We obtain
	\begin{displaymath}
		g(m,n)=\sum_{k=0}^{n}{(n-k+1)^{m}}=\sum_{k=1}^{n+1}{k^{m}},
	\end{displaymath}
	as desired.
\end{proof}

\begin{proof}[Proof of Proposition~\ref{prop:galois_ballon_chain}]
	This follows immediately from Proposition~\ref{prop:half_empty_bonds}.
\end{proof}

Appendix~\ref{app:illustration} lists the proper mergings of an $3$-star and a $1$-chain that are of the
form $(\emptyset,T)$, and the corresponding Galois connections between an $3$-balloon and a $2$-chain. 

\section*{Acknowledgements}
  \label{sec:acknowledgements}
The author is very grateful to three anonymous referees for their careful reading and their helpful remarks on 
presentation and content of the article.

\appendix

\section{Proof of Lemma~\ref{lem:equality}}
  \label{app:proof}
Recall from \eqref{eq:init} that putting \eqref{eq:cardinality_decomposition_1}, 
\eqref{eq:cardinality_decomposition_2} and Lemmas~\ref{lem:count_v1}--\ref{lem:count_valid} together, yields
\begin{displaymath}
	F_{\fs\!\cc}(m,n)
	  = \sum_{k_{1}=1}^{n+1}{F_{V_{1}}(m,n,k_{1})\sum_{k_{2}=0}^{k_{1}-1}\sum_{l=0}^{k_{2}}
	  F_{V_{2}}(m,k_{2},l)\cdot\left(k_{1}(l+1)-\binom{l+1}{2}\right)},
\end{displaymath}
where
\begin{align*}
	F_{V_{1}}(m,n,k_{1}) & = (n+2-k_{1})^{m}-(n+1-k_{1})^{m},\quad\mbox{and}\\
	F_{V_{2}}(m,k_{2},l) & = 
	  \begin{cases}
		  (k_{2}-l+1)^{m}-2(k_{2}-l)^{m}+(k_{2}-l-1)^{m}, & \mbox{if}\;l<k_{2}\\
		  1, & \mbox{if}\;l=k_{2}.
	  \end{cases}
\end{align*}
Recall further that 
\begin{displaymath}
	C(m,n) = \sum_{k=1}^{n}{k^{m}(n-k+2)^{m+1}},
\end{displaymath}
and we want to show that $F_{\fs\!\cc}(m,n)=C(m,n+1)$. Let us first focus on the term
\begin{align*}
	A(m,k_{1},k_{2}) & = \sum_{l=0}^{k_{2}-1}{F_{V_{2}}(m,k_{2},l)\cdot
	  \left(k_{1}(l+1)-\binom{l+1}{2}\right)}\\
	& = \sum_{l=0}^{k_{2}-1}{\Bigl((k_{2}-l+1)^m-2(k_{2}-l)^m+(k_{2}-l-1)^m\Bigr)
	  \cdot\left(k_{1}(l+1)-\binom{l+1}{2}\right)}.
\end{align*}
We can convince ourselves quickly that the following identities are true:
\begin{align*}
	k_{1}(l+1)-\binom{l+1}{2} & = k_{1}(l+2)-\binom{l+2}{2}+l+1-k_{1},\quad\mbox{and}\\
	k_{1}(l+1)-\binom{l+1}{2} & = k_{1}(l+3)-\binom{l+3}{2}+2l+3-2k_{1}.
\end{align*}
Thus, we can write
\begin{align*}
	A(m,k_{1},k_{2}) & = \sum_{l=0}^{k_{2}-1}{(k_{2}-l+1)^{m}
	  \cdot\left(k_{1}(l+1)-\binom{l+1}{2}\right)}\\
	& \kern1cm - 2\sum_{l=0}^{k_{2}-1}{(k_{2}-l)^{m}\cdot\left(k_{1}(l+1)-\binom{l+1}{2}\right)}\\
	& \kern1cm + \sum_{l=0}^{k_{2}-1}{(k_{2}-l-1)^{m}\cdot\left(k_{1}(l+1)-\binom{l+1}{2}\right)}\\
	& = \sum_{l=0}^{k_{2}-1}{(k_{2}-l+1)^{m}\cdot\left(k_{1}(l+1)-\binom{l+1}{2}\right)}\\
	& \kern1cm - 2\sum_{l=0}^{k_{2}-1}{(k_{2}-(l+1)+1)^{m}\cdot
		\left(k_{1}(l+2)-\binom{l+2}{2}+l+1-k_{1}\right)}\\
	& \kern1cm + \sum_{l=0}^{k_{2}-1}{(k_{2}-(l+2)+1)^{m}\cdot
		\left(k_{1}(l+3)-\binom{l+3}{2}+2l+3-2k_{1}\right)}.
\end{align*}
If we define $\varphi(m,k_{1},k_{2},l)=(k_{2}-l+1)^{m}\cdot\left(k_{1}(l+1)-\binom{l+1}{2}\right)$, then we 
obtain
\begin{align*}
	A(m,k_{1},k_{2}) & = \sum_{l=0}^{k_{2}-1}{\varphi(m,k_{1},k_{2},l)}\\
	& \kern1cm - 2\sum_{l=0}^{k_{2}-1}{\varphi(m,k_{1},k_{2},l+1)}
		-2\sum_{l=0}^{k_{2}-1}{(k_{2}-l)^{m}\cdot(l+1-k_{1})}\\
	& \kern1cm +\sum_{l=0}^{k_{2}-1}{\varphi(m,k_{1},k_{2},l+2)}
		+\sum_{l=0}^{k_{2}-1}{(k_{2}-l-1)^{m}\cdot\bigl(2(l+1-k_{1})+1\bigr)}\\
	& = \sum_{l=0}^{k_{2}-1}{\varphi(m,k_{1},k_{2},l)} - 2\sum_{l=0}^{k_{2}-1}{\varphi(m,k_{1},k_{2},l+1)}
		+\sum_{l=0}^{k_{2}-1}{\varphi(m,k_{1},k_{2},l+2)}\\
	& \kern1cm -2\sum_{l=0}^{k_{2}-1}{(k_{2}-l)^{m}\cdot(l+1-k_{1})}
		+\sum_{l=0}^{k_{2}-1}{(k_{2}-l-1)^{m}\cdot\bigl(2(l+1-k_{1})+1\bigr)}\\
	& = \sum_{l=0}^{k_{2}-1}{\varphi(m,k_{1},k_{2},l)} - 2\sum_{l=0}^{k_{2}-1}{\varphi(m,k_{1},k_{2},l+1)}
		+\sum_{l=0}^{k_{2}-1}{\varphi(m,k_{1},k_{2},l+2)}\\
	& \kern1cm +\sum_{l=0}^{k_{2}-1}{(k_{2}-l)^{m}\cdot(2k_{1}-2l-2)}
		+\sum_{l=0}^{k_{2}-1}{(k_{2}-l-1)^{m}\cdot\bigl(2l+3-2k_{1}\bigr)}.\\
\end{align*}
Let us now simplify the terms not involving $\varphi$.
\begin{align*}
	\psi(m,k_{1},k_{2}) & = \sum_{l=0}^{k_{2}-1}{(k_{2}-l)^{m}\cdot(2k_{1}-2l-2)} 
		+\sum_{l=0}^{k_{2}-1}{(k_{2}-l-1)^{m}\cdot\bigl(2l+3-2k_{1}\bigr)}\\
	& = \Bigl(k_{2}^{m}(2k_{1}-2)+(k_{2}-1)^{m}(2k_{1}-4)+\cdots+1^{m}(2k_{1}-2k_{2})\Bigr)\\
	& \kern1cm 
		+ \Bigl((k_{2}-1)^{m}(3-2k_{1})+(k_{2}-2)^{m}(5-2k_{1})+\cdots+1^{m}(2k_{2}-1-2k_{1})\Bigr)\\
	& = k_{2}^{m}(2k_{1}-2)-(k_{2}-1)^{m}-(k_{2}-2)^{m}-\cdots-1^m\\
	& = k_{2}^{m}(2k_{1}-2)-\sum_{l=1}^{k_{2}-1}{l^{m}}.
\end{align*}
Applying this identity and shifting indices yields
\begin{align*}
	A(m,k_{1},k_{2}) & = \sum_{l=0}^{k_{2}-1}{\varphi(m,k_{1},k_{2},l)} 
	  - 2\sum_{l=1}^{k_{2}}{\varphi(m,k_{1},k_{2},l)}
	  +\sum_{l=2}^{k_{2}+1}{\varphi(m,k_{1},k_{2},l)}\\
	& \kern1cm + \psi(m,k_{1},k_{2})\\
	& = \varphi(m,k_{1},k_{2},0)-\varphi(m,k_{1},k_{2},1)-\varphi(m,k_{1},k_{2},k_{2})
	  +\varphi(m,k_{1},k_{2},k_{2}+1)\\
	& \kern1cm + \psi(m,k_{1},k_{2})\\
	& = (k_{2}+1)^{m}k_{1} - k_{2}^{m}(2k_{1}-1) - k_{1}(k_{2}+1) + \binom{k_{2}+1}{2}\\
	& \kern1cm + k_{2}^{m}(2k_{1}-2)-\sum_{l=1}^{k_{2}-1}{l^{m}}\\
	& = k_{1}(k_{2}+1)^{m}-k_{1}(k_{2}+1) + \binom{k_{2}+1}{2} - \sum_{l=1}^{k_{2}}{l^{m}}.
\end{align*}
So far, we have shown that
\begin{align}\label{eq:step1}
	F_{\fs\!\cc}(m,n) & = \sum_{k_{1}=1}^{n+1}{\Bigl((n+2-k_{1})^{m}-(n+1-k_{1})^{m}\Bigr)}\\
\nonumber	
	& \kern1cm\cdot\sum_{k_{2}=0}^{k_{1}-1}\left(k_{1}(k_{2}+1)^{m}-k_{1}(k_{2}+1) + 
	  \binom{k_{2}+1}{2} - \sum_{l=1}^{k_{2}}{l^{m}}\right.\\
\nonumber	
	& \kern2cm + k_{1}(k_{2}+1)-\binom{k_{2}+1}{2}\left.\vphantom{\sum_{k_{2}=0}^{k_{1}-1}}\right)\\
\nonumber	
	& = \sum_{k_{1}=1}^{n+1}{\Bigl((n+2-k_{1})^{m}-(n+1-k_{1})^{m}\Bigr)}\\
\nonumber	
	& \kern1cm\cdot\sum_{k_{2}=0}^{k_{1}-1}\left(k_{1}(k_{2}+1)^{m} - \sum_{l=1}^{k_{2}}{l^{m}} + \right)\\
\nonumber	
	& = \sum_{k_{1}=1}^{n+1}{\Bigl((n+2-k_{1})^{m}-(n+1-k_{1})^{m}\Bigr)}\\
\nonumber
	& \kern1cm\cdot\left(\sum_{k_{2}=0}^{k_{1}-1}{k_{1}(k_{2}+1)^{m}}
	  - \sum_{k_{2}=0}^{k_{1}-1}{\sum_{l=1}^{k_{2}}{l^{m}}}\right).
\end{align}
We may now simplify the inner double sum:
\begin{align*}
	\sum_{k_{2}=0}^{k_{1}-1}\sum_{l=1}^{k_{2}}{l^{m}} 
	  & = 0+\sum_{l=1}^{1}{l^{m}}+\sum_{l=1}^{2}{l^{m}}+\cdots+\sum_{l=1}^{k_{1}-1}{l^{m}}\\
	& = k_{1}0^{m} + (k_{1}-1)1^{m} + (k_{1}-2)2^{m} + \cdots + 1(k_{1}-1)^{m}\\
	& = \sum_{k_{2}=0}^{k_{1}-1}{(k_{1}-k_{2})k_{2}^{m}}.
\end{align*}
If this is substituted in \eqref{eq:step1}, we obtain
\begin{align}\label{eq:step2}
	F_{\fs\!\cc}(m,n) & = \sum_{k_{1}=1}^{n+1}{\Bigl((n+2-k_{1})^{m}-(n+1-k_{1})^{m}\Bigr)}\cdot
	  \sum_{k_{2}=0}^{k_{1}-1}{k_{1}(k_{2}+1)^{m}}\\
\nonumber
	& \kern1cm - \sum_{k_{1}=1}^{n+1}{\Bigl((n+2-k_{1})^{m}-(n+1-k_{1})^{m}\Bigr)}\cdot
	  \sum_{k_{2}=0}^{k_{1}-1}{(k_{1}-k_{2})k_{2}^{m}} \\
\nonumber
	& = \sum_{k_{1}=1}^{n+1}{\Bigl((n+2-k_{1})^{m}-(n+1-k_{1})^{m}\Bigr)}\cdot
	  \sum_{k_{2}=0}^{k_{1}-1}{k_{1}(k_{2}+1)^{m}}\\
\nonumber
	& \kern1cm - \sum_{k_{1}=1}^{n+1}{\Bigl((n+2-k_{1})^{m}-(n+1-k_{1})^{m}\Bigr)}\cdot
	  \sum_{k_{2}=0}^{k_{1}-1}{k_{1}k_{2}^{m}}\\
\nonumber
	& \kern1cm + \sum_{k_{1}=1}^{n+1}{\Bigl((n+2-k_{1})^{m}-(n+1-k_{1})^{m}\Bigr)}\cdot
	  \sum_{k_{2}=0}^{k_{1}-1}{k_{2}^{m+1}}\\
\nonumber
	& = \sum_{k_{1}=1}^{n+1}{\Bigl((n+2-k_{1})^{m}-(n+1-k_{1})^{m}\Bigr)}\cdot
	  \sum_{k_{2}=1}^{k_{1}}{k_{1}k_{2}^{m}}\\
\nonumber
	& \kern1cm - \sum_{k_{1}=1}^{n+1}{\Bigl((n+2-k_{1})^{m}-(n+1-k_{1})^{m}\Bigr)}\cdot
	  \sum_{k_{2}=0}^{k_{1}-1}{k_{1}k_{2}^{m}}\\
\nonumber
	& \kern1cm + \sum_{k_{1}=1}^{n+1}{\Bigl((n+2-k_{1})^{m}-(n+1-k_{1})^{m}\Bigr)}\cdot
	  \sum_{k_{2}=0}^{k_{1}-1}{k_{2}^{m+1}}\\
\nonumber
	& = \sum_{k_{1}=1}^{n+1}{\Bigl((n+2-k_{1})^{m}-(n+1-k_{1})^{m}\Bigr)}\cdot k_{1}^{m+1}\\
\nonumber
	& \kern1cm + \sum_{k_{1}=1}^{n+1}{\Bigl((n+2-k_{1})^{m}-(n+1-k_{1})^{m}\Bigr)}\cdot
	  \sum_{k_{2}=0}^{k_{1}-1}{k_{2}^{m+1}}.
\end{align}
It is easy to check the identities
\begin{multline*}
	\sum_{k_{1}=1}^{n+1}{\Bigl((n+2-k_{1})^{m}-(n+1-k_{1})^{m}\Bigr)}\cdot k_{1}^{m+1} \\
	  = \sum_{k_{1}=1}^{n+1}{k_{1}^{m}\Bigl((n+2-k_{1})^{m+1}-(n+1-k_{1})^{m+1}\Bigr)},
\end{multline*}
and 
\begin{align*}
	\sum_{k_{1}=1}^{n+1}{\Bigl((n+2-k_{1})^{m}-(n+1-k_{1})^{m}\Bigr)}\cdot
	  \sum_{k_{2}=0}^{k_{1}-1}{k_{2}^{m+1}} = \sum_{k_{1}=1}^{n+1}{k_{1}^{m}(n+1-k_{1})^{m+1}}.
\end{align*}
Thus, substituting these in \eqref{eq:step2}, we obtain
\begin{align*}
	F_{\fs\!\cc}(m,n) & = \sum_{k_{1}=1}^{n+1}{\Bigl((n+2-k_{1})^{m}-(n+1-k_{1})^{m}\Bigr)}\cdot k_{1}^{m+1}\\
	& \kern1cm + \sum_{k_{1}=1}^{n+1}{\Bigl((n+2-k_{1})^{m}-(n+1-k_{1})^{m}\Bigr)}\cdot
	  \sum_{k_{2}=0}^{k_{1}-1}{k_{2}^{m+1}}\\
	& = \sum_{k_{1}=1}^{n+1}{k_{1}^{m}\Bigl((n+2-k_{1})^{m+1}-(n+1-k_{1})^{m+1}\Bigr)} 
	  + \sum_{k_{1}=1}^{n+1}{k_{1}^{m}(n+1-k_{1})^{m+1}}\\
	& = \sum_{k_{1}=1}^{n+1}{k_{1}^{m}\Bigl((n+2-k_{1})^{m+1}
	  - (n+1-k_{1})^{m+1} 
	  + (n+1-k_{1})^{m+1}\Bigr)}\\
	& = \sum_{k_{1}=1}^{n+1}{k_{1}^{m}(n+2-k_{1})^{m+1}}\\
	& = C(m,n+1),
\end{align*}
as desired. \qed

\section{Illustration of Proposition~\ref{prop:galois_ballon_chain}}
  \label{app:illustration}
\begin{remark}
	Let $(\emptyset,T)$ be a proper merging of an $m$-star $(S,\leq_{\fs})$ and an $n$-chain 
	$(C,\leq_{\cc})$. In order to produce the corresponding Galois connection, we define a dual bond
	$\hat{T}$ between $(S,S,\not\geq_{\fs})$ and $(C,C,\not\geq_{\cc})$ as follows: for every 
	$i\in\{1,2,\ldots,n\}$, we define
	\begin{displaymath}
		c_{i}^{\hat{T}}=
			\begin{cases}
				S\setminus\{c_{n+1-i}^{T}\} & \mbox{if}\;c_{n+1-i}^{T}\neq S,\quad\mbox{and}\\
				\emptyset & \mbox{otherwise}.
			\end{cases}
	\end{displaymath}
\end{remark}

\begin{longtable}{c|c|c|c}
	$(R,T)$ & $\hat{T}$ & $\varphi_{\hat{T}}$ & $\psi_{\hat{T}}$\\
	\hline
	\sNine{1}{.8} & \raisebox{.5cm}{\thEight} & \PhiEight & \PsiEight \\
	\hline
	\sThirteen{1}{.8} & \raisebox{.5cm}{\thSeven} & \PhiSeven & \PsiSeven \\
	\hline
	\sFourteen{1}{.8} & \raisebox{.5cm}{\thSix} & \PhiSix & \PsiSix\\
	\hline
	\sFifteen{1}{.8} & \raisebox{.5cm}{\thFive} & \PhiFive & \PsiFive \\
	\hline
	\sNineteen{1}{.8} & \raisebox{.5cm}{\thFour} & \PhiFour & \PsiFour \\
	\hline
	\sTwenty{1}{.8} & \raisebox{.5cm}{\thThree} & \PhiThree & \PsiThree\\
	\hline
	\sTwentyone{1}{.8} & \raisebox{.5cm}{\thTwo} & \PhiTwo & \PsiTwo \\
	\hline
	\sTwentythree{1}{.8} & \raisebox{.5cm}{\thOne} & \PhiOne & \PsiOne \\
	\hline
	\sTwentyfour{1}{.8} & \raisebox{.5cm}{\thNine} & \PhiNine & \PsiNine \\
\end{longtable}

\section{Illustration of Farley's Bijection}
  \label{app:illustration_bijection}

\begin{longtable}{c|c|c|c}
	$\zeta\in O\!P\bigl(\cc_{n},\PP_{1,3,2})$ & $R_{\zeta}$ & $T_{\zeta}$ 
	  & $\bigl(\{s_{0},s_{1},s_{2},c_{1}\},\leq_{R_{\zeta},T_{\zeta}}\bigr)$\\
	\hline
	\ZetaOne & \raisebox{1.5cm}{\rzOne} & \raisebox{1.5cm}{\tzOne} & \raisebox{1cm}{\pzOne{1}{.8}}\\
	\hline
	\ZetaTwo & \raisebox{1.5cm}{\rzThree} & \raisebox{1.5cm}{\tzOne} & \raisebox{1cm}{\pzTwo{1}{.8}}\\
	\hline
	\ZetaThree & \raisebox{1.5cm}{\rzThree} & \raisebox{1.5cm}{\tzTwo} & \raisebox{1cm}{\pzThree{1}{.8}}\\
	\hline
	\ZetaFour & \raisebox{1.5cm}{\rzFour} & \raisebox{1.5cm}{\tzOne} & \raisebox{1cm}{\pzFour{1}{.8}}\\
	\hline
	\ZetaFive & \raisebox{1.5cm}{\rzFive} & \raisebox{1.5cm}{\tzOne} & \raisebox{1cm}{\pzFive{1}{.8}}\\
	\hline
	\ZetaSix & \raisebox{1.5cm}{\rzFour} & \raisebox{1.5cm}{\tzTwo} & \raisebox{1cm}{\pzSix{1}{.8}}\\
	\hline
	\ZetaSeven & \raisebox{1.5cm}{\rzFive} & \raisebox{1.5cm}{\tzTwo} & \raisebox{1cm}{\pzSeven{1}{.8}}\\
	\hline
	\ZetaEight & \raisebox{1.5cm}{\rzTwo} & \raisebox{1.5cm}{\tzOne} & \raisebox{1cm}{\pzEight{1}{.8}}\\
	\hline
	\ZetaNine & \raisebox{1.5cm}{\rzTwo} & \raisebox{1.5cm}{\tzTwo} & \raisebox{1cm}{\pzNine{1}{.8}}\\
	\hline
	\ZetaTen & \raisebox{1.5cm}{\rzTwo} & \raisebox{1.5cm}{\tzThree} & \raisebox{1cm}{\pzTen{1}{.8}}\\
	\hline
	\ZetaEleven & \raisebox{1.5cm}{\rzTwo} & \raisebox{1.5cm}{\tzFour} & \raisebox{1cm}{\pzEleven{1}{.8}}\\
	\hline
	\ZetaTwelve & \raisebox{1.5cm}{\rzTwo} & \raisebox{1.5cm}{\tzFive} & \raisebox{1cm}{\pzTwelve{1}{.8}}\\
\end{longtable}

\bibliography{literature}
  \label{sec:references}

\end{document}